\documentclass[12pt,a4paper]{amsart}

\usepackage[utf8]{inputenc}
\usepackage[T1]{fontenc}
\usepackage{amsmath,amssymb,amsthm,mathtools,mathrsfs}
\usepackage{enumitem}
\usepackage{tikz-cd}
\usepackage{booktabs}
\usepackage[colorlinks=true,citecolor=blue,linkcolor=blue,urlcolor=blue]{hyperref}
\usepackage{aliascnt}
\usepackage[capitalise]{cleveref}
\usepackage{microtype}

\newtheorem{theorem}{Theorem}[section]

\newaliascnt{proposition}{theorem}
\newtheorem{proposition}[proposition]{Proposition}
\aliascntresetthe{proposition}

\newaliascnt{corollary}{theorem}
\newtheorem{corollary}[corollary]{Corollary}
\aliascntresetthe{corollary}

\newaliascnt{lemma}{theorem}
\newtheorem{lemma}[lemma]{Lemma}
\aliascntresetthe{lemma}

\theoremstyle{definition}

\newaliascnt{definition}{theorem}
\newtheorem{definition}[definition]{Definition}
\aliascntresetthe{definition}

\newaliascnt{example}{theorem}
\newtheorem{example}[example]{Example}
\aliascntresetthe{example}

\newaliascnt{model}{theorem}

\aliascntresetthe{model}

\theoremstyle{remark}

\newaliascnt{remark}{theorem}
\newtheorem{remark}[remark]{Remark}
\aliascntresetthe{remark}

\crefname{remark}{Remark}{Remarks}
\Crefname{remark}{Remark}{Remarks}

\crefname{theorem}{Theorem}{Theorems}
\Crefname{theorem}{Theorem}{Theorems}

\crefname{proposition}{Proposition}{Propositions}
\Crefname{proposition}{Proposition}{Propositions}

\crefname{corollary}{Corollary}{Corollaries}
\Crefname{corollary}{Corollary}{Corollaries}

\crefname{lemma}{Lemma}{Lemmata}
\Crefname{lemma}{Lemma}{Lemmata}

\crefname{definition}{Definition}{Definitions}
\Crefname{definition}{Definition}{Definitions}

\crefname{example}{Example}{Examples}
\Crefname{example}{Example}{Examples}

\crefname{model}{Model}{Models}
\Crefname{model}{Model}{Models}

\crefformat{equation}{(#2#1#3)}
\Crefformat{equation}{Equation~(#2#1#3)}
\crefmultiformat{equation}{(#2#1#3)}{ and~(#2#1#3)}{, (#2#1#3)}{ and~(#2#1#3)}
\crefrangeformat{equation}{(#3#1#4) to~(#5#2#6)}

\newcommand{\R}{\mathbb{R}}
\newcommand{\N}{\mathbb{N}}

\newcommand{\Z}{\mathbb{Z}}
\newcommand{\LL}{\mathcal{L}}        

\newcommand{\EE}{\mathcal{E}}        

\newcommand{\norm}[1]{\lVert #1 \rVert}

\newcommand{\id}{\mathrm{id}}

\begin{document}

\title[Linearization on Banach bundles]{A Hartman--Grobman theorem\\ on Banach bundles}

\author[Rogoll]{Benjamin Rogoll}
\address{Institut f\"ur Analysis, Fakult\"at Mathematik,
  TU Dresden, 01062 Dresden, Germany}
\email{benjamin.rogoll@tu-dresden.de}

\author[Siegmund]{Stefan Siegmund}
\address{Institut f\"ur Analysis, Fakult\"at Mathematik,
  TU Dresden, 01062 Dresden, Germany}
\email{stefan.siegmund@tu-dresden.de}

\date{\today}

\keywords{Hartman--Grobman theorem, skew-product flow, Banach bundle,
  exponential dichotomy, nonuniform exponential dichotomy,
  H\"older regularity, control-affine system}

\subjclass[2020]{37B55, 34D09, 37C15, 37D25, 93C10}

\begin{abstract}
We provide generalized linearization theorems for skew-product flows on abstract Banach bundles over locally compact metric spaces, allowing for a unified treatment of uniform and nonuniform, exponential and strong exponential dichotomies. This is accomplished by constructing an operator whose fixed points are exactly the conjugacies that are bounded perturbations of the identity, and by giving criteria under which this operator is a contraction. Under stronger assumptions we refine the argument to obtain H\"older regularity of the conjugacy. Finally, we apply our results to recover known results for non-autonomous differential equations and to obtain new linearization results for control-affine systems on manifolds.
\end{abstract}
\maketitle
\section{Introduction}\label{sec:intro}

When does the linearization of a dynamical system behave topologically like the original system? Classically such a question has been answered by a myriad of connected results. The classical Hartman--Grobman theorem answers this question for autonomous ODEs near a hyperbolic  equilibrium~\cite{Hartman1960LemmaTheory,Grobman1959HomeomorphismsSystems}. Three successive generalizations have shaped the modern theory:

\smallskip
\noindent\emph{(i) From autonomous to nonautonomous.}\;
Palmer~\cite{Palmer1973GeneralizationHartmans} replaced the hyperbolic matrix by a linear system $\dot x = A(t)\,x$ on $\R^N$ with an \emph{exponential dichotomy} (ED). The dichotomy concept itself goes back to Perron and was reshaped for nonautonomous problems by Sacker and Sell, who in a parallel line of work established existence and roughness of dichotomies and invariant splittings for linear skew-product flows~\cite{SackerSell1974ExistenceDichotomies,SackerSell1976ExistenceDichotomies}. The H\"older regularity of the Hartman--Grobman conjugacy in the autonomous finite-dimensional case has a more intricate history, with first proofs going back to Belitskii~\cite{Belitskii1973FunctionalEquations} in the 1970s; in the modern formulation we use, the H\"older estimate is part of the systematic treatment by Barreira and Valls~\cite{BarreiraValls2011SimpleProof}. Further explorations of this line by Backes, Dragi\v{c}evi\'c, and Zhang~\cite{BackesEtAl2022SmoothLinearization} extend smooth linearization to polynomial (non-exponential) behavior, while Backes and Dragi\v{c}evi\'c~\cite{BackesDragicevic2023MultiscaleLinearization} develop a multiscale variant that allows different rate controls along complementary directions.  In the discrete setting, Casta\~neda, Gonz\'alez, and Robledo~\cite{CastanedaEtAl2021TopologicalEquivalence} establish topological equivalence for nonautonomous difference equations with a family of dichotomies on the half line. 

\smallskip
\noindent\emph{(ii) From uniform to nonuniform.}\;
Barreira and Valls~\cite{BarreiraValls2011SimpleProof} extended Palmer's approach to \emph{nonuniform} exponential dichotomies, where the dichotomy estimates may deteriorate along orbits at a small exponential rate, the natural setting for smooth ergodic theory~\cite{BarreiraPesin2007NonuniformHyperbolicitya}. For random dynamical systems the corresponding linearization along hyperbolic stationary trajectories is due to Coayla-Terán, Mohammed, and Ruffino~\cite{Coayla-TeranEtAl2007HartmanGrobman}.

\smallskip
\noindent\emph{(iii) From finite to infinite dimensions.}\;
Pugh~\cite{Pugh1969TheoremHartman} extended Hartman--Grobman to arbitrary Banach spaces and Dragi\v{c}evi\'c et al.~\cite{DragicevicEtAl2025GlobalLinearization} have recently given a self-contained reworking of this result, and Backes and Dragi\v{c}evi\'c~\cite{backes_stability_2024} consider nonautonomous systems on Fréchet spaces; Aulbach and Wanner~\cite{AulbachWanner2000HartmanGrobman} proved a Hartman--Grobman theorem for Carath\'eodory-type equations on Banach spaces, and Hein and Pr\"uss~\cite{HeinPruss2016HartmanGrobman} obtained a H\"older version for autonomous semilinear equations \(\dot v = Av + r(v)\) when $A$ generates a $C_0$-\emph{group}. Bates and Lu~\cite{BatesLu1994HartmanGrobman} treat autonomous evolution equations in a Hilbert space carrying an inertial manifold. A more abstract strand obtains the Banach-space theorem through a topological \emph{decoupling}, or reduction principle, valid in arbitrary complete metric spaces~\cite{Reinfelds1994ReductionPrinciple}, while $C^1$ linearizations on Banach spaces are due to Rodrigues and Sol\`a-Morales~\cite{rodrigues_smooth_2004}. Pötzsche and Russ~\cite{PotzscheRuss2016TopologicalDecoupling} give a non-autonomous counterpart of this decoupling strand, with a spectral gap condition on the Sacker--Sell spectrum, replacing invertibility. 

\smallskip
All three lines of generalization work on a \emph{fixed} state space~$X$, usually a Banach space, the conjugacy being a family of homeomorphisms $H_t \colon X \to X$. This paper develops a Hartman--Grobman theory directly on Banach bundles, providing a single framework for trivial and non-trivial bundles, finite- and infinite-dimensional fibers, uniform and nonuniform dichotomies, with an explicit H\"older exponent matching the Barreira--Valls exponent to leading order~\cite{BarreiraValls2011SimpleProof}.  When specialized to trivial bundles, the results recover the classical theorems of Palmer, Barreira--Valls, and  Aulbach--Wanner as special cases.

\subsection{Structure of the paper}
We begin this paper by giving a loose introduction to the theory of Banach bundles, as well as stating some necessary facts about their topology over locally compact metric spaces and their sections. We finish the second section by introducing skew-product flows as a notion of dynamical system, which generalizes the notion of a two-parameter family of solutions to differential equations, and with that the notion of semigroups. A particularly interesting thing here is that there is no longer a singular notion of a ``state space'', but rather what is considered a state might be subject to change with the dynamic of the base space. It should be noted that, since all our dynamics are reversible, along a given trajectory in the base space all ``state spaces'' will be topologically isomorphic, but still might carry different geometries. This is intended, as it enabled us to give a unified treatment of uniform and nonuniform behaviors.

In \cref{sec:ED} we give a generalized notion of exponential dichotomy, uniform and nonuniform, as well as strong exponential dichotomy, also uniform and nonuniform. We furthermore show that the discrepancy between uniform and nonuniform notions is not intrinsic to the dynamics, but rather a consequence of the ambient geometry where the dynamic is studied. This means we are showing that, using a construction reminiscent of Lyapunov norms, we can, to a given nonuniform dichotomy, find a topologically equivalent Banach bundle structure under which the dichotomy becomes uniform.

Continuing to \cref{sec:HGT} we prove a series of Hartman--Grobman type theorems for uniform, or nonuniform, exponential dichotomies and strong exponential dichotomies. We accomplish this using a similar strategy to~\cite{BarreiraValls2011SimpleProof}, namely by realizing a conjugation as the existence of a fixed point of a certain operator, which our assumptions guarantee to be a contraction on a Banach space. This technique can be further adapted to obtain local and Hölder analogs of our theorem.

Finally, in \cref{sec:App} we apply the proven theorems to obtain a new result in control theory which under assumptions of finite-dimensionality allows for the linearization of control-affine systems in a topologically equivalent way. We do this by viewing the underlying set of control functions as a compact metric space when equipped with the weak-* topology, following the work of Kawan~\cite{Kawan2013InvarianceEntropy,Kawan2017UniformlyHyperbolic}, and the control-affine system as a skew-product flow over the shift on said space. The closest existing result is the abstract Hartman--Grobman theorem of Baratchart, Chyba, and Pomet~\cite{BaratchartEtAl2007GrobmanHartmana}, which already conjugates over a topological space of inputs, but only allows for a \emph{constant} linear part and possibly non-autonomous non-linearities, so the dichotomy subspaces do not vary in time.

Lastly we show that our approach is strong enough to recover and extend known linearization results by applying our theorems to recover results of Aulbach and Wanner~\cite{AulbachWanner2000HartmanGrobman} as a representative example.

\subsection{Motivation and origins of the bundle formulation}

In this paper we chose a fairly unorthodox way to approach the subject of linearization in dynamics. This has not been the goal from the start. At first, we set out to prove a Hartman--Grobman theorem for the more classically known class of skew-product flows where the bundle structure is trivial, so just given by a cartesian product \( B \times X\). This approach was sufficient and elegant up until nonuniform exponential dichotomy sparked our interest. There, the definitions were ill-fitted to the setting of skew-product flows. We fixed this problem by understanding nonuniform dichotomies as uniform dichotomies along an adapted family of norms as has been explored by Barreira and Valls~\cite{BarreiraValls2011SimpleProof} and by Barreira, Dragi\v{c}evi\'c, and Valls~\cite{BarreiraEtAl2018AdmissibilityHyperbolicity}. This has provided reason enough to generalize the theory to settings which are no longer trivial, since even when one is starting out on a trivial bundle \( B \times X\) it is no longer wise to see the new space, which has been renormed over every \( b \in B\), as trivial. And due to recent works by Kreidler and Siewert~\cite{kreidler_gelfand-type_2020} and Siewert~\cite{Siewert2020WeightedKoopman} we have found Banach bundles to be a suitable setting to develop the theory on.

\section{Banach bundles, skew-product flows, and conjugacies}
  \label{sec:framework}

\subsection{Banach bundles}\label{subsec:BB}

Throughout the paper we fix a \emph{locally compact metric space} \(B\) (the \emph{base}).  A Banach bundle is, intuitively, a continuously varying family of Banach spaces \(\{E_b\}_{b \in B}\) glued together into a single topological space~\(\EE\).  The classical examples are tangent bundles \( TM\) of a manifold \( M\) with an additional structure that turns every tangent space into a Banach space in a compatible manner. This is the case, for example, with symmetric Finsler manifolds or Riemannian manifolds. In our setting we need two generalizations: the fibers may be infinite-dimensional Banach spaces, and the bundle need \emph{not} be locally trivial.  The minimal axioms compatible with these requirements were isolated by Fell and Doran in the context of \(C^*\)-algebraic representation theory and consist of four conditions on the fiber norm, addition and scalar multiplication, and the topology near the zero section.

\begin{definition}[Banach bundle]\label{def:BanachBundle}
  A \textit{Banach bundle over \( B\)} is a triple
\((\EE, \pi, B)\) where \(\EE\) is a Hausdorff topological space
(the \emph{total space}),
\(\pi \colon \EE \to B\) is a continuous \emph{open} surjection,
and each fiber \(E_b := \pi^{-1}(b)\) is a Banach space with norm
\(\norm{\cdot}_b\), subject to:
\begin{enumerate}[label=(BB\arabic*)]
  \item \label{ax:norm}
    The map \(v \mapsto \norm{v}_{\pi(v)}\) is continuous on~\(\EE\).
  \item \label{ax:add}
    The addition map
    \(\EE \times_B \EE \to \EE\), \((u,v) \mapsto u + v\)
    (on the fiber product
    \(\EE \times_B \EE = \{(u,v) \in \EE^2 : \pi(u) = \pi(v)\}\))
    is continuous.
  \item \label{ax:scal}
    The scalar multiplication \(\R \times \EE \to \EE\),
    \((\lambda, v) \mapsto \lambda v\), is continuous.
  \item \label{ax:nbhd}
    For every \(b \in B\) and every open \(W \subseteq \EE\) containing
    \(0_b \in E_b\), there exist \(\varepsilon > 0\) and a neighborhood
    \(U\) of \(b\) such that
    \(\{v \in \pi^{-1}(U) : \norm{v} < \varepsilon\} \subseteq W\).
\end{enumerate}
\end{definition}
Our standing topological assumptions (\(B\) locally compact metric, hence in particular locally compact Hausdorff and first countable; \(\EE\) Hausdorff; \(\pi\) continuous and open), together with~\ref{ax:norm}--\ref{ax:nbhd}, subsume Definition~13.4 of Fell and Doran~\cite{FellDoran1988Representations} (see also Kreidler and Siewert~\cite{kreidler_gelfand-type_2020} and Siewert~\cite{Siewert2020WeightedKoopman}), which requires only the locally compact Hausdorff axioms on the base. Condition~\ref{ax:nbhd} asserts that the topology of \(\EE\) near the zero section is determined by the fiber norms and the base topology; it can be viewed as a topological analog of local triviality~\cite[Remark~13.9]{FellDoran1988Representations}. We at no point rely on local triviality of the bundles, even though in applications local triviality occurs naturally.

A \emph{section} of \(\EE\) is a map \(\sigma \colon B \to \EE\) with \(\pi \circ \sigma = \id_B\); if it is continuous we call \( \sigma\) a continuous section.

\begin{remark}[Minimal topological hypotheses on the base]
\label{rem:minimal-base-topology}
The proofs in this paper invoke only three topological properties
of \(B\):
\begin{enumerate}[label=\textup{(\roman*)}]
  \item \emph{local compactness}, used to extract compact neighborhoods \(K \subseteq B\) of a basepoint on which fiber-uniform continuity arguments apply;
  \item \emph{Hausdorff separation} of \(B\), which guarantees uniqueness of limits, as well as being necessary for \cref{prop:local-sections}, which much of our argumentation relies upon;
  \item \emph{first countability} of \(B\), used to reduce continuity verifications on \(\EE\) to sequential arguments (\cref{lem:first-countable}, \cref{lem:uniform-limit}), and when proving the invariance of continuous bundle maps under our fixed point operator, for example, in \cref{sec:App}.
\end{enumerate}
The hypothesis ``\(B\) locally compact metric'' is a standard and convenient way to secure (i)--(iii) simultaneously: every concrete base appearing in this paper---smooth manifolds, Kawan's compact metrizable space of admissible control functions~\cite[Prop.~1.14]{Kawan2013InvarianceEntropy}---is metric.  Replacing ``metric'' by ``locally compact Hausdorff and first countable'' would change none of the arguments below; we retain ``metric'' for compatibility with the cited literature and brevity of exposition.
\end{remark}

We will need two standard facts about Banach bundles, both of which require the standing topological assumptions (in particular, local compactness of~\(B\)): a convergence criterion characterizing the topology of~\(\EE\) via local sections, and the existence of continuous local sections through every point of~\(\EE\).

\begin{lemma}[Convergence criterion]\label{lem:conv-criterion}
Let \((\EE, \pi, B)\) be a Banach bundle, \(w_0 \in \EE\), and \(\sigma \colon W \to \EE\) a continuous section on an open neighborhood \(W\) of \(b_0 := \pi(w_0)\) with \(\sigma(b_0) = w_0\). A net \((w_\lambda) \subseteq \EE\) converges to \(w_0\) in \(\EE\) if and only if
\begin{enumerate}[label=\textup{(\roman*)}]
  \item \(\pi(w_\lambda) \to b_0\) in~\(B\), and
  \item \(\norm{w_\lambda - \sigma(\pi(w_\lambda))} \to 0\) in \(\R\) \textup{(the difference being formed in the common fiber \(E_{\pi(w_\lambda)}\))}.
\end{enumerate}
\end{lemma}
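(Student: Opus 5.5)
Both directions come down to the axioms \ref{ax:norm}--\ref{ax:nbhd} combined with continuity of \(\sigma\). We work in the fiber \(E_{\pi(w_\lambda)}\) and use the decomposition
\[
w_\lambda = \bigl(w_\lambda - \sigma(\pi(w_\lambda))\bigr) + \sigma(\pi(w_\lambda)).
\]
Note that \(\pi(w_\lambda)\in W\) holds eventually in both directions: in the forward direction because \(\pi\) is continuous and \(W\) is open, and in the converse direction by (i). So \(\sigma(\pi(w_\lambda))\) is defined for all large \(\lambda\).

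\emph{Necessity.} Suppose \(w_\lambda \to w_0\). Continuity of \(\pi\) gives (i). Continuity of \(\sigma\) on \(W\) then gives \(\sigma(\pi(w_\lambda)) \to \sigma(b_0) = w_0\). The pairs \((w_\lambda, \sigma(\pi(w_\lambda)))\) lie in \(\EE\times_B\EE\) and converge there to \((w_0,w_0)\). By \ref{ax:add} and \ref{ax:scal} (subtraction is addition composed with multiplication by \(-1\)), \(w_\lambda - \sigma(\pi(w_\lambda)) \to w_0 - w_0 = 0_{b_0}\). Finally, \ref{ax:norm} gives \(\norm{w_\lambda - \sigma(\pi(w_\lambda))} \to \norm{0_{b_0}} = 0\), which is (ii).

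\emph{Sufficiency.} This is the substantive direction. Set \(u_\lambda := w_\lambda - \sigma(\pi(w_\lambda)) \in E_{\pi(w_\lambda)}\).

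First, \(u_\lambda \to 0_{b_0}\). Given an open \(W'\ni 0_{b_0}\), axiom \ref{ax:nbhd} provides \(\varepsilon>0\) and a neighbourhood \(U\) of \(b_0\) such that every \(v\in\pi^{-1}(U)\) with \(\norm{v}<\varepsilon\) lies in \(W'\). By (i) and (ii), eventually \(\pi(u_\lambda)=\pi(w_\lambda)\in U\) and \(\norm{u_\lambda}<\varepsilon\). Hence \(u_\lambda\in W'\) eventually.

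Second, \(\sigma(\pi(w_\lambda))\to w_0\) by (i) and continuity of \(\sigma\).

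Third, \(w_\lambda = u_\lambda + \sigma(\pi(w_\lambda))\) is the image of a net in \(\EE\times_B\EE\) converging to \((0_{b_0}, w_0)\). By \ref{ax:add} it converges to \(0_{b_0}+w_0 = w_0\).

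The key step is the first one: translating norm-smallness over nearby base points into convergence to the zero element. This is exactly what \ref{ax:nbhd} provides, and without it the sufficiency direction fails. No local compactness is needed here. It enters only in the companion existence result for local sections, which guarantees that a \(\sigma\) as in the hypothesis is available at all.
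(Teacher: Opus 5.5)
Your proof is correct and complete. The paper gives no argument of its own for this lemma; it only cites the Banach-bundle literature (Fell--Doran, Kreidler--Siewert, Siewert). So your proposal is a self-contained replacement for a citation rather than a competing route, and it is the standard argument.

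\emph{Necessity.} You combine continuity of $\pi$ and of $\sigma$, continuity of subtraction on $\EE\times_B\EE$ (obtained from \ref{ax:add} and \ref{ax:scal}), and continuity of the norm \ref{ax:norm}.

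\emph{Sufficiency.} You use \ref{ax:nbhd} to turn norm-smallness over nearby base points into $w_\lambda-\sigma(\pi(w_\lambda))\to 0_{b_0}$. Then \ref{ax:add} adds back $\sigma(\pi(w_\lambda))\to w_0$.

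You also handle correctly the point that $\sigma(\pi(w_\lambda))$ is defined only on a tail of the net. This is implicit in condition (ii) and harmless, since net convergence depends only on tails.

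What your version buys over the citation is transparency about hypotheses. It uses only \ref{ax:norm}--\ref{ax:nbhd} and the given continuous $\sigma$. It needs neither local compactness of $B$, nor Hausdorffness of $\EE$, nor openness of $\pi$.

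Your closing remark is therefore accurate, and it sharpens the paper's framing. The paper says both standard facts ``require the standing topological assumptions (in particular, local compactness of $B$).'' That is true only of \cref{prop:local-sections}, which is what makes the criterion applicable at every point of $\EE$; it is not true of the convergence criterion itself. The citation, in exchange, buys brevity.
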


\begin{proof}
This is given in most works involving Banach bundles, e.g.~\cite{Siewert2020WeightedKoopman,kreidler_gelfand-type_2020,FellDoran1988Representations}.
\end{proof}

\begin{proposition}[Existence of local sections through a point]\label{prop:local-sections}
Let \((\EE, \pi, B)\) be a Banach bundle over a locally compact Hausdorff base.  For every \(w_0 \in \EE\) there exist an open neighborhood \(W \subseteq B\) of \(\pi(w_0)\) and a continuous section \(\sigma \colon W \to \EE\) with \(\sigma(\pi(w_0)) = w_0\).
\end{proposition}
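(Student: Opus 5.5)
The plan is the classical Douady--dal Soglio-H\'erault argument (cf.\ the appendix of Fell and Doran~\cite{FellDoran1988Representations}). We fix a compact neighborhood \(K\) of \(b_0:=\pi(w_0)\). We build a sequence of (not necessarily continuous) sections \(s_n\colon K\to\EE\) that are ``continuous up to \(2^{-n}\)'' and that converge uniformly in the fiber norms. We then show that the uniform limit is continuous and passes through \(w_0\).

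The basic tool is a reformulation of continuity that avoids sections altogether. If \((u_\lambda)\) and \((u'_\lambda)\) are nets with \(\pi(u_\lambda)=\pi(u'_\lambda)\) and \(u_\lambda\to v\), \(u'_\lambda\to v\), then \(u_\lambda-u'_\lambda\to 0_{\pi(v)}\) by \ref{ax:add} and \ref{ax:scal}. Hence \(\norm{u_\lambda-u'_\lambda}\to0\) by \ref{ax:norm}. Conversely, \ref{ax:nbhd} together with \ref{ax:add} shows that being norm-close in the fiber to a point converging to \(v\) forces convergence to \(v\).

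Openness of \(\pi\) supplies local ``approximate selections''. For every \(v\in E_b\) and every neighborhood \(N\) of \(v\), the set \(\pi(N)\) is an open neighborhood of \(b\). So every nearby fiber contains a point of \(N\), although such points need not be chosen continuously.

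\emph{Step 1 (one approximation step).} Let \(\varepsilon>0\) and a section \(s\) over \(K\) be given, where \(s\) is \(\varepsilon\)-continuous: each \(b\in K\) has a neighborhood \(U_b\) and a neighborhood \(N_b\) of \(s(b)\) such that \(\norm{s(c)-u}<\varepsilon\) for all \(c\in U_b\) and all \(u\in N_b\cap E_c\). We produce an \(\varepsilon/2\)-continuous section \(s'\) with \(\sup_K\norm{s'-s}\le 2\varepsilon\).

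To do this, use compactness of \(K\) and a finite partition of unity \(\{\varphi_i\}\) subordinate to small sets \(U_{b_i}\), which exists since \(K\) is metric. Over \(U_{b_i}\), the openness of \(\pi\) lets us choose points \(u_i(c)\in N_{b_i}'\cap E_c\) in shrunken neighborhoods \(N_{b_i}'\) of \(s(b_i)\). Set \(s'(c)=\sum_i\varphi_i(c)\,u_i(c)\). This is a fiberwise convex combination of points that are pairwise \(2\varepsilon\)-close to \(s(c)\), so \(\norm{s'-s}\le2\varepsilon\).

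The improved continuity modulus of \(s'\) comes from two facts:
\begin{itemize}
\item the \(\varphi_i\) are genuinely continuous, so \ref{ax:add} and \ref{ax:scal} apply;
\item the sets \(N'_{b_i}\) were chosen so small that any two selections from them differ by less than \(\varepsilon/4\).
\end{itemize}
Finally, fix \(s'(b_0)=w_0\) at each stage by choosing \(u_i(b_0)=w_0\), which is allowed since \(w_0\in N'_{b_i}\) whenever \(b_0\in U_{b_i}\).

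\emph{Step 2 (iteration and limit).} Start with \(s_0\) obtained from an arbitrary selection near \(w_0\), which is \(\varepsilon_0\)-continuous on a small \(K\) by the openness argument. Iterate Step 1 with \(\varepsilon_n=2^{-n}\varepsilon_0\). Then \(\sum_n\sup_K\norm{s_{n+1}-s_n}<\infty\), so for each \(c\) the sequence \(s_n(c)\) is Cauchy in the Banach space \(E_c\) and converges to some \(\sigma(c)\), with \(\sigma(b_0)=w_0\). Continuity of \(\sigma\) at \(c\) follows by a \(3\varepsilon\) argument. Take a net \(c_\lambda\to c\). The uniform estimate \(\norm{\sigma-s_n}\le 4\varepsilon_n\) holds on \(K\), and \(s_n\) is \(\varepsilon_n\)-continuous. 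Together with \ref{ax:nbhd} and \ref{ax:add}, this shows \(\sigma(c_\lambda)\) is eventually in every neighborhood of \(\sigma(c)\). Restricting \(\sigma\) to the interior \(W\) of \(K\) gives the claim.

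\emph{Main obstacle.} The delicate point is Step 1, and specifically how \(\varepsilon\)-continuity is formulated. It must make sense without any continuous sections available, since that would be circular. It must also be strong enough that the fiberwise convex combination \(s'\) gains a factor of \(1/2\).

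I would first try the definition stated in Step 1, phrased purely via neighborhoods in \(\EE\) and fiber norms. I would verify the gain using the observation above: two nets converging to the same point become norm-close. If this bookkeeping fails, I would fall back on citing the Douady--dal Soglio-H\'erault theorem as presented in Fell--Doran, whose hypotheses are exactly \ref{ax:norm}--\ref{ax:nbhd} over a locally compact Hausdorff base.
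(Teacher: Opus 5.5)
\paragraph{Comparison with the paper.} Your strategy is the one the paper relies on. The paper gives no argument of its own; it cites the Douady--dal Soglio-H\'erault theorem in Fell--Doran \cite[Appendix~C]{FellDoran1988Representations}. So your fallback is exactly the paper's proof, and your sketch is the Michael-selection-type iteration behind that theorem. The outline works: $\varepsilon$-continuity phrased through neighbourhoods in $\EE$, a uniformly Cauchy sequence of sections, and continuity of the limit via lifts obtained from openness of $\pi$.

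\paragraph{The gap: pinning $s'(b_0)=w_0$.} You justify $u_i(b_0)=w_0$ by claiming $w_0\in N'_{b_i}$ whenever $b_0\in U_{b_i}$. This does not follow. $N'_{b_i}$ is a small neighbourhood of $s(b_i)$, not of $s(b_0)=w_0$. The $\varepsilon$-continuity of $s$ at $b_i$ only says that the points of $N'_{b_i}\cap E_{b_0}$ lie within $\varepsilon$ of $w_0$. Since you also force $N'_{b_i}$ to have in-fibre diameter below $\varepsilon/4$, $w_0$ will in general lie outside it. Already on a trivial bundle, an $\varepsilon$-continuous $s$ may jump at $b_0$ by more than that diameter. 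Without the pinning you only get $\norm{\sigma(b_0)-w_0}\le 4\varepsilon_0$, whereas the statement requires $\sigma(b_0)=w_0$.

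\paragraph{Repair.} It is cheap and uses only that $B$ is Hausdorff. When extracting the finite subcover in Step~1, use the open cover of $K$ consisting of $U'_{b_0}$ together with the sets $U'_b\setminus\{b_0\}$ for $b\neq b_0$. Then $b_0$ lies only in the member centred at $b_0$, so the subordinate partition of unity has $\varphi_{b_0}(b_0)=1$. Since $s(b_0)=w_0\in N'_{b_0}$, you may legitimately set $u_{b_0}(b_0)=w_0$, and inductively $s_n(b_0)=w_0$ for all $n$.

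\paragraph{Two smaller bookkeeping points.}
\begin{enumerate}
\item After shrinking $N_b$ to $N'_b$, shrink $U_b$ to $U_b\cap\pi(N'_b)$ before choosing the finite subcover. Otherwise the selections $u_i(c)\in N'_{b_i}\cap E_c$ need not exist on all of $\operatorname{supp}\varphi_i$.
\item The $\varepsilon/2$-gain is verified as follows. Use continuity of $(\lambda,w_1,\dots,w_k)\mapsto\sum_i\lambda_i w_i$ on $\R^k$ times the $k$-fold fibre product at the point $(\varphi(b),u_1(b),\dots,u_k(b))$. Then, by openness of $\pi$, lift points close to each $u_i(b)$ into $N'_{b_i}\cap E_c$. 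The gain comes entirely from the small in-fibre diameter of the $N'_{b_i}$. The $\varepsilon$-continuity of $s$ is used only to get $\sup_K\norm{s'-s}<\varepsilon$.
\end{enumerate}
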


\begin{proof}
This is the Douady--dal Soglio-H\'erault section-existence theorem; see Fell and Doran~\cite[Appendix~C]{FellDoran1988Representations}.
\end{proof}

\begin{remark}\label{rem:paracompact-base}
  Since the Douady--dal Soglio-H\'erault section-existence theorem, \cite[Appendix~C]{FellDoran1988Representations}, is also true when \( B\) is \textit{paracompact}, we want to remark here that a similar treatment is potentially possible with this assumption instead, although not without immense restructuring, since much of our argumentation explicitly relies on the existence of compact neighborhoods.
\end{remark}

Two further consequences of the standing topological assumptions will be needed in \cref{sec:ED} and beyond: the total space is first countable, and fiber-norm-uniform limits of continuous sections are continuous.

\begin{lemma}[First countability of the total space]\label{lem:first-countable}
The total space \(\EE\) is first countable.  In particular, continuity of a map with domain \( \mathcal{E}\) at a point can be verified along sequences.
\end{lemma}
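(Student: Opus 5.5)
The plan is to write down an explicit countable neighborhood base at an arbitrary point \(w_0 \in \EE\), using a local section through \(w_0\) and the convergence criterion. Fix \(w_0\) and put \(b_0 := \pi(w_0)\). By \cref{prop:local-sections} there are an open neighborhood \(W\) of \(b_0\) and a continuous section \(\sigma \colon W \to \EE\) with \(\sigma(b_0) = w_0\). Since \(B\) is metric, it has a countable base \((U_n)_{n\in\N}\) at \(b_0\), for instance the balls of radius \(1/n\) intersected with \(W\). For \(n \in \N\) set
\[
  T_n := \bigl\{ v \in \pi^{-1}(U_n) : \norm{v - \sigma(\pi(v))} < 1/n \bigr\}.
\]
The first step is to check that each \(T_n\) is open. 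The map \(v \mapsto v - \sigma(\pi(v))\) on \(\pi^{-1}(W)\) is continuous: \(v \mapsto (v, \sigma(\pi(v)))\) is continuous into \(\EE \times_B \EE\), and then \ref{ax:add} and \ref{ax:scal} apply. Composing with \ref{ax:norm} shows that \(v \mapsto \norm{v - \sigma(\pi(v))}\) is continuous, and \(\pi^{-1}(U_n)\) is open. Hence \(T_n\) is an open set containing \(w_0\).

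The second step is to show that the \(T_n\) form a neighborhood base. Let \(O\) be an open set containing \(w_0\), and suppose no \(T_n\) lies in \(O\). Then we can pick \(w_n \in T_n \setminus O\). By construction \(\pi(w_n) \to b_0\) and \(\norm{w_n - \sigma(\pi(w_n))} < 1/n \to 0\). The sequence \((w_n)\) is in particular a net, so \cref{lem:conv-criterion} gives \(w_n \to w_0\). This contradicts \(w_n \notin O\) for all \(n\). So some \(T_n \subseteq O\), and \(\EE\) is first countable.

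The ``in particular'' clause is the standard fact that in a first-countable space, sequential continuity at a point implies continuity there. If \(f\) were not continuous at \(w_0\), we could pick points from a decreasing countable base at \(w_0\) whose images avoid a fixed neighborhood of \(f(w_0)\). The resulting sequence converges to \(w_0\) while its images do not converge to \(f(w_0)\).

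The only delicate point is the continuity of \(v \mapsto v - \sigma(\pi(v))\), which is what makes the \(T_n\) open. The argument for it given above uses only the axioms. Alternatively, one could bypass openness by noting that the \(T_n\) are neighborhoods of \(w_0\) directly via \cref{lem:conv-criterion}: if some \(T_n\) were not a neighborhood, a net converging to \(w_0\) from outside \(T_n\) would violate conditions (i)--(ii) of the criterion.
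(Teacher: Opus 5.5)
Your proof is correct and takes essentially the same route as the paper: a continuous local section $\sigma$ through $w_0$ from \cref{prop:local-sections}, neighborhoods cut out by $\pi^{-1}(U_n)$ together with smallness of $\norm{v-\sigma(\pi(v))}$, openness via \ref{ax:norm}--\ref{ax:scal}, and the base property via \cref{lem:conv-criterion}. The differences are cosmetic: you use a single nested family $T_n$ instead of the paper's double-indexed $V_{n,k}$, which conveniently gives a nested base for the sequential-continuity clause. You also spell out the contradiction argument that the paper compresses into one sentence.
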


\begin{proof}
Let \(w_0 \in \EE\) and \(b_0 := \pi(w_0)\).  Since \(B\) is metric, it has a countable basis \((U_n)_{n \in \N}\) of open neighborhoods at~\(b_0\).  By \cref{prop:local-sections} choose a continuous section \(\sigma \colon W \to \EE\) on an open neighborhood \(W\) of~\(b_0\) with \(\sigma(b_0) = w_0\), and without loss of generality \(U_n \subseteq W\) for every~\(n\).  For \(n, k \in \N\), set
\begin{equation*}
  V_{n,k} := \bigl\{ v \in \pi^{-1}(U_n) :\norm{v - \sigma(\pi(v))} < 1/k \bigr\}.
\end{equation*}
Each \(V_{n,k}\) contains \(w_0\) (since \(\sigma(b_0) = w_0\)) and is open in~\(\EE\): the map \(v \mapsto v - \sigma(\pi(v))\) is continuous on \(\pi^{-1}(W)\) by~\ref{ax:add} and~\ref{ax:scal} and continuity of \(\sigma\) and \(\pi\), and the fiber norm is continuous by~\ref{ax:norm}, so \(V_{n,k}\) is the intersection of \(\pi^{-1}(U_n)\) with the preimage of \([0, 1/k)\) under a continuous map.  The converse implication in~\cref{lem:conv-criterion} shows that every open neighborhood of \(w_0\) in \(\EE\) contains some \(V_{n,k}\). Hence, \((V_{n,k})_{n,k \in \N}\) is a countable neighborhood basis of \(w_0\).

The final assertion is the standard fact that, in a first countable space, continuity at a point is equivalent to sequential continuity at that point.
\end{proof}

\begin{lemma}[Uniform limit of sections]\label{lem:uniform-limit}
Let \(W \subseteq B\) be open and \((\sigma_m)_{m \in \N}\) a sequence of continuous sections \(\sigma_m \colon W \to \EE\).  If \(\sigma \colon W \to \EE\) is a section satisfying
\begin{equation*}
  \sup_{b \in W}\norm{\sigma_m(b) - \sigma(b)}
  \;\xrightarrow{m \to \infty}\; 0,
\end{equation*}
then \(\sigma\) is continuous on \(W\).
\end{lemma}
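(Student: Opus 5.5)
We will prove continuity of \(\sigma\) at an arbitrary point \(b_0 \in W\). By \cref{lem:first-countable} it would suffice to work with sequences, but the convergence criterion \cref{lem:conv-criterion} already reduces the problem to norm estimates, so we argue with it directly. Set \(w_0 := \sigma(b_0)\). By \cref{prop:local-sections} choose an open neighborhood \(V \subseteq W\) of \(b_0\) and a continuous section \(\tau \colon V \to \EE\) with \(\tau(b_0) = w_0\). We must show that for any net (or sequence) \(b_\lambda \to b_0\) in \(V\), we have \(\sigma(b_\lambda) \to w_0\). By \cref{lem:conv-criterion}, applied with the section \(\tau\), this holds as soon as
\[
\norm{\sigma(b_\lambda) - \tau(b_\lambda)} \to 0 .
\]
Condition (i) of that lemma is automatic, since \(\pi(\sigma(b_\lambda)) = b_\lambda\).

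The core step is an \(\varepsilon/3\) argument in the common fiber \(E_{b_\lambda}\). Fix \(\varepsilon > 0\) and pick \(m\) with \(\sup_{b \in W}\norm{\sigma_m(b)-\sigma(b)} < \varepsilon/3\). The triangle inequality in \(E_{b_\lambda}\) gives
\[
\norm{\sigma(b_\lambda) - \tau(b_\lambda)} \le \norm{\sigma(b_\lambda) - \sigma_m(b_\lambda)} + \norm{\sigma_m(b_\lambda) - \tau(b_\lambda)} .
\]
The first term is below \(\varepsilon/3\) uniformly in \(\lambda\). For the second term, we note that \(b \mapsto \sigma_m(b) - \tau(b)\) is a continuous section on \(V\): it is the composition of \(b \mapsto (\sigma_m(b),\tau(b)) \in \EE\times_B\EE\) with addition and scalar multiplication, which are continuous by \ref{ax:add} and \ref{ax:scal}. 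Composing with the norm, which is continuous by \ref{ax:norm}, the real function \(b\mapsto \norm{\sigma_m(b)-\tau(b)}\) is continuous on \(V\). Its value at \(b_0\) is
\[
\norm{\sigma_m(b_0) - w_0} = \norm{\sigma_m(b_0)-\sigma(b_0)} < \varepsilon/3 .
\]
Hence for \(\lambda\) large enough the second term is below \(2\varepsilon/3\), and so \(\norm{\sigma(b_\lambda)-\tau(b_\lambda)} < \varepsilon\) eventually.

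The main (mild) obstacle is the comparison of \(\sigma_m(b_\lambda)\) and \(\sigma(b_\lambda)\) with something that converges. This is why we introduce the auxiliary local section \(\tau\) through \(w_0\) instead of using \(\sigma_m\) directly: \(\sigma_m(b_0)\) need not equal \(w_0\), so \cref{lem:conv-criterion} cannot be applied with \(\sigma_m\) as the reference section. The uniform estimate absorbs this discrepancy at \(b_0\). Once the bound above is established, \cref{lem:conv-criterion} yields \(\sigma(b_\lambda)\to\sigma(b_0)\), and since \(b_0\in W\) was arbitrary, \(\sigma\) is continuous on \(W\).
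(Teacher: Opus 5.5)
Your proof is correct and follows essentially the same route as the paper's: you take an auxiliary local section $\tau$ through $\sigma(b_0)$, insert $\sigma_m$ via the triangle inequality, and use continuity of $b\mapsto\norm{\sigma_m(b)-\tau(b)}$ together with the convergence criterion. The only differences are cosmetic. You argue with nets, which removes the appeal to first countability, and you use an explicit $\varepsilon/3$ bound where the paper takes a $\limsup \le 2\varepsilon_m$ and lets $m\to\infty$.
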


\begin{proof}
Fix \(b_0 \in W\); we show \(\sigma\) is continuous at \(b_0\).  Since \(W\) is first countable at \(b_0\) (inherited from the metric topology on~\(B\)) and \(\EE\) is Hausdorff, it suffices to show \(\sigma(b_n) \to \sigma(b_0)\) in \(\EE\) for every sequence \((b_n) \subseteq W\) with \(b_n \to b_0\).

By \cref{prop:local-sections}, choose a continuous local section \(\tau \colon W_\tau \to \EE\) with \(\tau(b_0) = \sigma(b_0)\); for large \(n\), \(b_n \in W_\tau \cap W\). By \cref{lem:conv-criterion} applied to \(\tau\) at \(\sigma(b_0)\), it suffices to show \(\norm{\sigma(b_n) - \tau(b_n)} \to 0\).  For any \(m \in \N\),
\begin{align*}
  \norm{\sigma(b_n) - \tau(b_n)}
  &\;\leq\; \norm{\sigma(b_n) - \sigma_m(b_n)}
          + \norm{\sigma_m(b_n) - \tau(b_n)} \\
  &\;\leq\; \varepsilon_m
          + \norm{\sigma_m(b_n) - \tau(b_n)},
\end{align*}
where \(\varepsilon_m := \sup_{b \in W}\norm{\sigma_m(b) - \sigma(b)}\).  The map \(b \mapsto \norm{\sigma_m(b) - \tau(b)}\) is continuous on \(W \cap W_\tau\) by~\ref{ax:norm}--\ref{ax:scal}, so \(\norm{\sigma_m(b_n) - \tau(b_n)} \to \norm{\sigma_m(b_0) - \tau(b_0)} = \norm{\sigma_m(b_0) - \sigma(b_0)} \leq \varepsilon_m\). Therefore, \(\limsup_n \norm{\sigma(b_n) - \tau(b_n)} \leq 2\varepsilon_m\) for every \(m\); letting \(m \to \infty\) gives \(\norm{\sigma(b_n) - \tau(b_n)} \to 0\).
\end{proof}

Taken together, \cref{lem:conv-criterion,prop:local-sections,lem:first-countable,lem:uniform-limit} allow us to verify continuity of bundle-valued maps along local sections: to show that a map \(\Theta \colon \EE \to \EE\) covering \(\id_B\) is continuous at~\(v_0\), it suffices to exhibit a continuous local section~\(\sigma\) with \(\sigma(\pi(v_0)) = v_0\), check that \(b \mapsto \Theta(\sigma(b))\) is a continuous section, and check that \(\norm{\Theta(v_n) - \Theta(\sigma(\pi(v_n)))} \to 0\) whenever \(v_n \to v_0\).

\subsection{Skew-product flows}\label{subsec:SPF}

Skew-product flows have been in active use since at least the 1970s, finding heavy use in the work of Sacker and Sell~\cite{SackerSell1974ExistenceDichotomies,SackerSell1976ExistenceDichotomies,SackerSell1978SpectralTheory}, Chow and Yi~\cite{chow_center_1994}, and Chow and Leiva~\cite{chow_existence_1995}, and have since become an established notion of modeling non-autonomous dynamics and a focus of study in and of themselves~\cite{kloeden_nonautonomous_2011,ChiconeLatushkin1999EvolutionSemigroups} and have recently even found application in control theory~\cite{ColoniusFabbri2025NonautonomousControl}. 

Let \(\theta \colon \R \times B \to B\) be a continuous flow on the base.  We write \(b \cdot t := \theta_t(b)\).

\begin{definition}[Skew-product flow]\label{def:SPF}
A \emph{skew-product flow} (SPF) on \(\EE\) over~\(\theta\) is a continuous map \(\Phi \colon \R \times \EE \to \EE\) satisfying:
\begin{enumerate}[label=\textup{(\roman*)}]
  \item \(\pi \circ \Phi_t = \theta_t \circ \pi\)
    \quad (fiber-respecting);
  \item \(\Phi_0 = \id_\EE\);
  \item \(\Phi_{t+s} = \Phi_t \circ \Phi_s\)
    \quad (flow property).
\end{enumerate}
We write \(\Phi_t := \Phi(t, \cdot)\).  The commutative diagram
\begin{equation}\label{eq:SPF}
\begin{tikzcd}
  \EE \ar[r, "\Phi_t"] \ar[d, "\pi"'] & \EE \ar[d, "\pi"] \\
  B \ar[r, "\theta_t"'] & B
\end{tikzcd}
\end{equation}
says that \(\Phi\) maps fibers to fibers, covering~\(\theta\).
\end{definition}

\begin{definition}[Linear SPF]\label{def:linearSPF}
An SPF \(\Phi\) is \emph{linear} if each restriction \(\Phi(t,b) := \Phi_t|_{E_b} \colon E_b \to E_{b \cdot t}\) is a bounded linear operator.  The flow property then takes the form of a \emph{composition rule}:
\begin{equation}\label{eq:cocycle-def}
  \Phi(t+s, b) = \Phi(t, b \cdot s)\, \Phi(s, b)
  \qquad (t, s \in \R,\; b \in B).
\end{equation}
Since \(\Phi\) is two-sided, each \(\Phi(t, b)\) is invertible with  \[
\Phi(t, b)^{-1} = \Phi(-t, b \cdot t).
\]
\end{definition}

On a non-trivial bundle the operators \(\Phi(t,b) \in \LL(E_b, E_{b \cdot t})\) map between \emph{a priori different} Banach spaces; thus one has to be careful in type-checking. We will, for the sake of brevity, not spell this out every single time. Another consequence of this is that the usual notion of strong continuity does not make sense anymore in our setting. We substitute this with a weaker, but related notion. 

\begin{definition}[Continuity along sections]
\label{def:CAS}
Let \(h \colon B \to B\) be a continuous map and let \(T = (T(b))_{b \in B}\) be a family of bounded operators \(T(b) \in \LL(E_b, E_{h(b)})\) covering~\(h\).  We say that \(T\) is \emph{continuous along sections} if, for every continuous section \( \sigma\), the map \(b \mapsto T(b)\,\sigma(b)\) is a continuous section of~\(\EE\) along~\(h\), i.e.\ a continuous map satisfying \[
    \pi \circ T\sigma = h
.\] 
\end{definition}

On a trivial bundle with a single fiber, choosing \(\sigma\) constant reduces \cref{def:CAS} to strong operator continuity of \(b \mapsto T(b)\); on a general bundle it is the genuine bundle-theoretic analog.

\subsection{Morphisms and conjugacies}\label{subsec:conjugacies}

Let \(\Phi\) be an SPF over \(\theta\) on \((\EE, \pi, B)\) and \(\Phi'\) an SPF over \(\theta'\) on \((\EE', \pi', B')\).

\begin{definition}[Morphism and conjugacy]\label{def:morphism}
A \emph{morphism} from \(\Phi\) to \(\Phi'\) is a pair \((h, H)\) where \(h \colon B \to B'\) intertwines the base flows (\(h \circ \theta_t = \theta'_t \circ h\)) and \(H \colon \EE \to \EE'\) is a bundle map over~\(h\) (\(\pi' \circ H = h \circ \pi\)) satisfying \(H \circ \Phi_t = \Phi'_t \circ H\) for all \(t\). If \(h\) is a homeomorphism and each \(H|_{E_b}\) is a homeomorphism and the inverse \( H^{-1}\) is continuous, then \((h, H)\) is an \emph{equivalence}. If, in addition, \( \mathcal{E} = \mathcal{E}'\), \(\theta = \theta'\) and \( h = \id\), then the equivalence is called a \emph{conjugacy}.
\end{definition}

\begin{remark}\label{rem:readability}
  We want to remark at this point that all the statements we are going to derive here will, for readability's sake, focus on the case of conjugacies, so when \( h = \id\) on \( B\). This does not mean that the case where \( h\) is nontrivial is any more difficult. In fact all the proofs will translate mutatis mutandis, by replacing \( b\) with \( h(b)\) if necessary.
\end{remark}

A morphism can be expressed by the commutativity of
\[\begin{tikzcd}[ampersand replacement=\&,sep=small]
	{\R \times \mathcal{E}} \&\&\&\& {\mathcal{E}} \&\& \\
	\\
	\&\& {\R \times \mathcal{E}'} \&\&\&\& {\mathcal{E}'} \\
	\\
	{\R \times B} \&\&\&\& B \\
	\\
	\&\& {\R \times B'} \&\&\&\& {B'}
  \arrow["\Phi", from=1-1, to=1-5]
	\arrow["{\id \times H}", from=1-1, to=3-3]
	\arrow[from=1-1, to=5-1]
	\arrow["H"', from=1-5, to=3-7]
	\arrow[from=1-5, to=5-5]
	\arrow[from=3-7, to=7-7]
	\arrow["\theta"{pos=0.6}, from=5-1, to=5-5]
	\arrow["{\id \times h}", from=5-1, to=7-3]
	\arrow["h", from=5-5, to=7-7]
	\arrow["{\theta'}", from=7-3, to=7-7]
	\arrow["{\Phi'}"{pos=0.4}, from=3-3, to=3-7,crossing over]
  \arrow[from=3-3, to=7-3, crossing over]
\end{tikzcd}\]

\subsection{Classical equivalences as special cases}
\label{subsec:examples}

The bundle conjugacy framework unifies several classical equivalence concepts.

\begin{proposition}\label{prop:classical}
The following are special cases of \cref{def:morphism}:
\begin{enumerate}[label=\textup{(\alph*)}]
  \item \emph{Autonomous conjugacy:} \(B = \{*\}\),
    \(\EE = X\), \(\Phi = \psi\) (a flow on~\(X\)).
    A fiber conjugacy is a homeomorphism
    \(H \colon X \to X\) with \(H \circ \psi_t = \psi'_t \circ H\).
  \item \emph{Nonautonomous conjugacy:} \(B = \R\),
    \(\EE = \R \times \R^N\) (trivial), \(\theta_t(s) = s + t\).
    A fiber conjugacy over \(\id_\R\) is a family
    \[H_s \colon \R^N \to \R^N\] with
    $H_{s+t} \circ \psi(s{+}t, s, \cdot) =
    \psi'(s{+}t, s, \cdot) \circ H_s$.
  \item \emph{Equivalence with time shift:} as in~\textup{(b)} but
    with a base map \(h \neq \id_\R\); since \(h\) intertwines the
    translations, \(h(s) = s + c\) for some \(c \in \R\).
  \item \emph{Control systems:} the linearization along a
    hyperbolic equilibrium section
    \(\sigma \colon \mathcal{U}  \to M\)
    of a control-affine system on a manifold~\(M\)~\cite{ColoniusFabbri2025NonautonomousControl,ColoniusKliemann2000DynamicsControl}
    yields a linear SPF on the tangent bundle along~\(\sigma\),
    which is in general non-trivial.
\end{enumerate}
\end{proposition}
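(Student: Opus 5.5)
The plan is to verify each of the four items by instantiating the data of \cref{def:morphism}: a base space with its flow, a Banach bundle over it, a skew-product flow on that bundle, and a base map. In each case we then unwind what a conjugacy (or equivalence) amounts to. The common first step is to check that the proposed \((\EE,\pi,B)\) is a Banach bundle in the sense of \cref{def:BanachBundle}. For a trivial bundle \(B\times X\) with the product topology and \(\pi\) the projection, this is routine. \(\pi\) is open because projections out of products are open. \ref{ax:norm}--\ref{ax:scal} follow from continuity of the norm and of the vector operations on \(X\). For \ref{ax:nbhd}, take an open \(W\ni(b,0)\); it contains a product neighbourhood \(U\times\{\norm{x}<\varepsilon\}\), which is exactly the required set.

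For (a), take \(B=\{*\}\) with the trivial flow. Then \(\EE=X\), a skew-product flow is just a continuous flow \(\psi\) on \(X\), and \(h=\id\) is forced. Condition \(\pi\circ\Phi_t=\theta_t\circ\pi\) holds vacuously, and the conjugacy condition reads \(H\circ\psi_t=\psi'_t\circ H\) with \(H\) a homeomorphism.

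For (b), take \(B=\R\), \(\theta_t(s)=s+t\), and \(\EE=\R\times\R^N\). Given a two-parameter process \(\psi(t,s,\cdot)\) (solutions from time \(s\) to time \(t\), satisfying \(\psi(t,s,\cdot)\psi(s,r,\cdot)=\psi(t,r,\cdot)\) and \(\psi(s,s,\cdot)=\id\)), define
\[
\Phi_t(s,x):=\bigl(s+t,\psi(s+t,s,x)\bigr).
\]
The flow axioms of \cref{def:SPF} then reduce exactly to the process identities, and continuity is joint continuity of \(\psi\). Writing \(H(s,x)=(s,H_s(x))\) and evaluating \(H\circ\Phi_t=\Phi'_t\circ H\) at \((s,x)\) gives the displayed identity.

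For (c), we must show that a continuous \(h\colon\R\to\R\) commuting with all translations has the form \(h(s)=s+c\). Setting \(s=0\) in \(h(s+t)=h(s)+t\) gives \(h(t)=h(0)+t\), so \(c:=h(0)\) works. The intertwining relation for \(H\) is then the one in (b) with \(H_s\colon\R^N\to\R^N\) landing in the fiber over \(s+c\).

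Item (d) is the only part that is not pure bookkeeping, and I expect it to be the main obstacle. The base is Kawan's compact metric space \(\mathcal U\) of admissible controls with the shift flow. The nonlinear flow \(\varphi(t,x,u)\) of \(\dot x=f_0(x)+\sum_i u_i f_i(x)\) gives a skew product on \(\mathcal U\times M\). An equilibrium section \(\sigma\) satisfies \(\varphi(t,\sigma(u),u)=\sigma(u\cdot t)\). I would take the bundle \(\EE=\sigma^*TM=\{(u,v): v\in T_{\sigma(u)}M\}\), with the fiber norms coming from a Riemannian metric, and the linear SPF \(\Phi(t,u)=D_x\varphi(t,\sigma(u),u)\colon T_{\sigma(u)}M\to T_{\sigma(u\cdot t)}M\). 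The key steps are as follows.
\begin{enumerate}[label=\textup{(\roman*)}]
  \item Show that \(\sigma^*TM\) is a Banach bundle. It is a pullback of a locally trivial vector bundle along a continuous map, hence locally trivial, which gives \ref{ax:norm}--\ref{ax:nbhd}.
  \item Show that the composition rule \eqref{eq:cocycle-def} holds by the chain rule applied to the cocycle property of \(\varphi\).
  \item Show that \(\Phi\) is jointly continuous. This needs continuous dependence of the variational equation on the control in the weak-* topology; for control-affine systems this is standard, since the controls enter linearly.
\end{enumerate}
Non-triviality of the bundle in general follows because \(TM\) itself need not be trivial, so the item only requires that the construction fits \cref{def:morphism}, not that it is non-trivial in every case.
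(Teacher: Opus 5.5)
Your verification of (a)--(c) and your construction in (d) are correct and agree with what the paper does. The paper gives no separate proof of this proposition. The data you instantiate are exactly those it uses later: the translation skew product $\Phi(t,s,x)=(s+t,\widetilde\Phi(t+s,s,x))$ when recovering the Aulbach--Wanner theorem, and the pulled-back bundle $\mathcal{E}_\sigma=\sigma^*TM$ with the derivative cocycle $\Phi(t,u)=d_x\psi(t,u,\sigma(u))$ in \cref{subsec:control-affine}. There, too, joint continuity is taken from Kawan's weak-* continuity results.

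The one thing you should drop is the justification of ``in general non-trivial'' via non-triviality of $TM$, because that argument is false.
\begin{enumerate}[label=\textup{(\roman*)}]
  \item The base $\mathcal{U}$ is a convex subset of the locally convex space $L^\infty(\R,\R^m)$ with its weak-* topology. Hence $(s,u)\mapsto(1-s)u+su_0$ is a continuous contraction of $\mathcal{U}$.
  \item $\mathcal{U}$ is compact Hausdorff, hence paracompact. Homotopic maps from a paracompact space induce isomorphic pullback bundles, so $\sigma^*TM\cong\mathcal{U}\times T_{\sigma(u_0)}M$ as a vector bundle, for every manifold $M$.
  \item Applying Gram--Schmidt to a global frame makes this trivialization even isometric for the Riemannian fiber norms.
\end{enumerate}
So the clause can only be read in the weak sense that $\mathcal{E}_\sigma$ does not come with a canonical product structure: its fibers are the distinct spaces $T_{\sigma(u)}M$. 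That is presumably the intended meaning. As you say yourself, the proposition only asks that the construction fit \cref{def:morphism}, so this does not affect the rest of your argument.

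A minor precision in (c): with $H(s,x)=(s+c,H_s(x))$ the intertwining relation reads $H_{s+t}\circ\psi(s+t,s,\cdot)=\psi'(s+c+t,s+c,\cdot)\circ H_s$. The times in $\psi'$ are shifted as well, not just the target fiber.
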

\section{Exponential dichotomies of skew-product flows}  \label{sec:ED}

Although thoroughly studied and well understood in the community for evolution families, the more general case of exponential dichotomies for skew-product flows has been of more niche interest~\cite{chow_center_1994,LatushkinEtAl1996EvolutionarySemigroups,BlumenthalLatushkin2019SelgradeDecomposition,Rau1996HyperbolicLinear} and has mainly remained in the realm of trivial bundles. A notion of exponential dichotomy for skew-products on non-trivial bundles has so far been missing from the literature as far as we can tell, even though the interest in properly non-trivial bundles~\cite{gierz_bundles_1982} and their dynamics has been developed in works like~\cite{kreidler_gelfand-type_2020} and in the thesis~\cite{Siewert2020WeightedKoopman}. In this section, we will give notions of uniform and nonuniform exponential dichotomies and explore how the richness of the class of Banach bundles enables us to treat both cases simultaneously. This is exemplified in \cref{thm:ED-NUED-equiv} and \cref{thm:StNUED-StED-Equiv} and their subsequent application in later sections. 

\subsection{Definitions}\label{subsec:ED-def}

\begin{definition}[Invariant projection family]\label{def:projfamily}
Let \(\Phi\) be a linear SPF on \((\EE, \pi, B)\).  An \emph{invariant projection family} is a collection \(P = (P(b))_{b \in B}\) of bounded projections \(P(b) \in \LL(E_b)\) such that:
\begin{enumerate}[label=\textup{(\roman*)}]
  \item \(P\) is continuous along sections (\cref{def:CAS});
  \item \( P\) is invariant under \( \Phi\), i.e.  \[
  P(b \cdot t)\,\Phi(t,b) = \Phi(t,b)\,P(b)\] 
  for all \(t \in \R, b \in B\).
\end{enumerate}

We set \(Q(b) := \id_{E_b} - P(b)\),
\(E^s_b := \mathrm{im}\, P(b)\),
\(E^u_b := \ker P(b)\).
\end{definition}

\begin{remark}\label{rem:P-continuity-local}
  We want to note here that it is possible to only require continuity along local sections, because the base space, a locally compact metric space, admits cutoff functions. This leads to the equivalence of both definitions in our setting.
\end{remark}

From here on out we will always assume that \( P\) is an invariant projection in the above sense.

\begin{definition}[Exponential dichotomy]\label{def:ED}
A linear SPF \(\Phi\) admits an \emph{exponential dichotomy (ED) with data \((P, D, \delta)\)} if there exist an invariant projection family~\(P\), \(D \geq 1\), and \(\delta > 0\) with
\begin{align}
  \label{eq:ED-stable}
  \norm{\Phi_t\, P\, v}
  &\leq D\, e^{-\delta t}\, \norm{v}
  &&(t \geq 0), \tag{EDs}\\[4pt]
  \label{eq:ED-unstable}
  \norm{\Phi_{-t}\, Q\, v}
  &\leq D\, e^{-\delta t}\, \norm{v}
  && \tag{EDu}(t \geq 0),
\end{align}
for all \(v \in \EE\).
\end{definition}

\begin{remark}\label{rem:D-meaning}
  The constant \(D\) measures how far the fiber norms are from being adapted to the dichotomy; \cref{thm:ED-NUED-equiv} below shows that \(D\) can always be reduced to~\(1\) by a change of bundle structure, at the cost of a slightly smaller rate.
\end{remark}

\begin{remark}[Projector bounds]\label{rem:PQ-bounds}
Setting \(t = 0\) in~\eqref{eq:ED-stable}--\eqref{eq:ED-unstable} gives \(\norm{P(b)v} \leq D\norm{v}\) and \(\norm{Q(b)v} \leq D\norm{v}\) for all \(v \in E_b\), hence \[
\sup_b \norm{P(b)} \leq D \qquad \sup_b \norm{Q(b)} \leq D.\] 
These uniform bounds are used silently in \cref{sec:HGT}.
\end{remark}

\subsection{Nonuniform dichotomies}\label{subsec:ED-equiv}
We now proceed to weaken the concept of an exponential dichotomy in a way which has its genesis in the landmark works~\cite{oseledec_lyapunov_1968,pesin_families_1976} and is in general a more typical notion of dichotomy, where one allows exponential spoiling along trajectories to take place. To generalize the concept we need to introduce functions that model this spoiling along curves in the base space. 

\begin{definition}[\( \varepsilon\)-slowly varying functions]\label{def:SlowVar}
  We say \( N : B \to [1,\infty)\) is \( \varepsilon\)-slowly varying if it is continuous and for all \( t \in \R\) and \( b \in B\) we have \[
      N(b \cdot t ) \leq e^{\varepsilon |t| }N(b)
  .\] 
\end{definition}

Having defined these functions, we use them to define both nonuniform exponential dichotomy and equivalent bundle structures, which, as we will prove, connect and unify the concepts.

\begin{definition}[Nonuniform exponential dichotomy]\label{def:NUED}
  A linear SPF \( \Phi\) admits a \emph{nonuniform exponential dichotomy} (NUED) with data \[
      (P,D,\varepsilon,\delta)
  \] if there exists an invariant projection \( P\), constants \( D \geq 1\) and \( \delta >0\) and an \( \varepsilon\)-slowly varying function \( N : B \to [1,\infty)\) such that     
  \begin{align}
      \label{eq:NUED-stable}
      \norm{\Phi_t\,P\,v}
      &\leq D\, N(b)\, e^{-\delta t}\, \norm{v}, \tag{NUEDs}\\[4pt]
      \label{eq:NUED-unstable}
      \norm{\Phi_{-t}\,Q\,v}
      &\leq D\, N(b)\, e^{-\delta t}\, \norm{v}, \tag{NUEDu}
    \end{align}
    for all \(t \geq 0\), \(b \in B\), \(v \in E_b\).
\end{definition}

The term ``NUED'' is justified by viewing the nonuniformity not
as a separate notion of hyperbolicity, but as the manifestation
of a genuine ED when the fiber norms are not adapted to the
dynamics, which will become clear in the following pages.

\begin{definition}[\(\varepsilon\)-equivalent bundle structures]
\label{def:eps-equiv}
Two Banach bundle structures \(\norm{\cdot}_b\) and \(\norm{\cdot}'_b\) on \((\EE, \pi, B)\) are \emph{\(\varepsilon\)-equivalent} if there exists an \( \varepsilon\)-slowly varying function \(N \colon B \to [1, \infty)\) with
\begin{equation}
  \norm{v}_b \leq \norm{v}'_b \leq N(b)\,\norm{v}_b
\end{equation}
for all \(v \in E_b\).
\end{definition}

\begin{remark}\label{rem:eps-equiv-topology}
  Generally speaking an \( \varepsilon\)-equivalent Banach bundle structure induces the same topology, since locally they will differ only by a finite factor. However, it is not true that this Banach bundle is the same in a strict sense, because the identity map \( \id_\mathcal{E} : (\mathcal{E}, \|\cdot \| ) \to (\mathcal{E}, \|\cdot \|' )\) is not a bundle morphism, even though it is a homeomorphism of the total space, since the distortion function will in general not be bounded. Barreira and Pesin~\cite{BarreiraPesin2007NonuniformHyperbolicitya} call such functions \( \varepsilon\)-tempered, whereas analogous tempering appears in the theory of random dynamical systems, where the Lyapunov norm supplied by the multiplicative ergodic theorem satisfies \( B_{\varepsilon}(\theta_t \omega) \leq e^{\varepsilon |t| }B_{\varepsilon}(\omega)\), which allows nonuniformity to enter the Hartman--Grobman theorem of~\cite{Coayla-TeranEtAl2007HartmanGrobman}.
\end{remark}

\begin{theorem}[NUED and adapted exponential dichotomies]
\label{thm:ED-NUED-equiv}
Let \(\Phi\) be a linear SPF on \((\mathcal{E},\pi,B)\), and let \(\varepsilon \geq 0\).

\begin{enumerate}[label=\textup{(\roman*)}]
\item If \(\Phi\) admits an NUED with constants \((P,D,\varepsilon,\delta)\), then for every \(\delta'\in (0,\delta)\) there exists an \(\varepsilon\)-equivalent Banach bundle norm \(\|\cdot\|'\) such that \(\Phi\), viewed with respect to \(\|\cdot\|'\), has an ED with data \((P,1,\delta')\).

\item Conversely, if there exists an \(\varepsilon\)-equivalent Banach bundle norm \(\|\cdot\|'\) such that \(\Phi\) has an ED with respect to \(\|\cdot\|'\) with data \((P,D,\delta')\), then \(\Phi\), viewed with respect to the original norm, admits an NUED with constants \((P,D,\varepsilon,\delta')\).
\end{enumerate}
Thus, nonuniform exponential dichotomies are precisely uniform exponential dichotomies after passing to an \(\varepsilon\)-equivalent Banach bundle norm.
\end{theorem}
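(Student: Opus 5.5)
The plan for (i) is to build an explicit Lyapunov-type norm out of the dichotomy estimates. For \(b\in B\), \(v\in E_b\) and a fixed \(\delta'\in(0,\delta)\), I would set
\begin{equation*}
  \norm{v}'_b := \sup_{t\geq 0} e^{\delta' t}\norm{\Phi_t P(b) v}_{b\cdot t} + \sup_{t\geq 0} e^{\delta' t}\norm{\Phi_{-t} Q(b) v}_{b\cdot(-t)} .
\end{equation*}
Taking \(t=0\) gives \(\norm{v}_b\leq \norm{P v}+\norm{Qv}\leq\norm{v}'_b\). By \eqref{eq:NUED-stable}--\eqref{eq:NUED-unstable}, each supremum is at most \(D N(b)\norm{v}_b\), so \(\norm{v}'_b\leq 2DN(b)\norm{v}_b\). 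The function \(N':=2DN\) is continuous, takes values in \([1,\infty)\), and is still \(\varepsilon\)-slowly varying. Hence the two structures are \(\varepsilon\)-equivalent in the sense of \cref{def:eps-equiv}, and on each fiber \(\norm{\cdot}'_b\) is a norm equivalent to \(\norm{\cdot}_b\), so every fiber stays a Banach space.

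Next I would verify the ED estimates with constant \(1\). By invariance of \(P\) (\cref{def:projfamily}), \(P(b\cdot t)\Phi_t P v=\Phi_tPv\) and \(Q(b\cdot t)\Phi_tPv=0\), so
\begin{equation*}
  \norm{\Phi_tPv}'_{b\cdot t}=\sup_{s\geq0}e^{\delta's}\norm{\Phi_{s+t}Pv}\leq e^{-\delta' t}\sup_{r\geq t}e^{\delta' r}\norm{\Phi_r Pv}\leq e^{-\delta' t}\norm{v}'_b
\end{equation*}
for \(t\geq 0\), and symmetrically for \(\Phi_{-t}Q\). This gives an ED with data \((P,1,\delta')\).

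The step I expect to be the main obstacle is checking that \(\norm{\cdot}'\) satisfies the Banach bundle axioms. Axioms \ref{ax:add} and \ref{ax:scal} involve only the unchanged topology of \(\EE\). For \ref{ax:nbhd}, the inclusion \(\{\norm{v}'<\varepsilon\}\subseteq\{\norm{v}<\varepsilon\}\) lets us inherit the property directly from the old structure. The real issue is \ref{ax:norm}, because a supremum of continuous functions is a priori only lower semicontinuous.

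To handle \ref{ax:norm}, I would first show that \(v\mapsto P(\pi(v))v\) is continuous on \(\EE\). Given \(v_n\to v_0\), choose by \cref{prop:local-sections} a continuous local section \(\sigma\) through \(v_0\). Then
\begin{equation*}
  \norm{Pv_n-P\sigma(\pi v_n)}\leq D\norm{v_n-\sigma(\pi v_n)}\to0,
\end{equation*}
using \cref{rem:PQ-bounds}, while \(P\sigma\) is continuous because \(P\) is continuous along sections; \cref{lem:conv-criterion} then gives \(Pv_n\to Pv_0\). It follows that \((t,v)\mapsto e^{\delta't}\norm{\Phi_tPv}\) is jointly continuous by continuity of \(\Phi\) and \ref{ax:norm}, so for each \(T\) the supremum over \(t\in[0,T]\) is continuous in \(v\), since \([0,T]\) is compact. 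For the tail, note that \(e^{\delta' t}\norm{\Phi_tPv}\leq DN(\pi v)e^{-(\delta-\delta')t}\norm{v}\). Since \(N\circ\pi\) and \(\norm{\cdot}\) are continuous, hence bounded on a neighborhood of \(v_0\), the supremum over \(t\geq T\) is uniformly small there for large \(T\). Thus \(\norm{\cdot}'\) is a locally uniform limit of continuous functions and is therefore continuous. The same argument applies to the \(Q\) term.

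Part (ii) is a direct chain of inequalities. For \(t\geq0\),
\begin{equation*}
  \norm{\Phi_tPv}\leq\norm{\Phi_tPv}'_{b\cdot t}\leq De^{-\delta't}\norm{v}'_b\leq DN(b)e^{-\delta't}\norm{v}_b,
\end{equation*}
and the same chain works for \(\Phi_{-t}Q\). Together these give the NUED with data \((P,D,\varepsilon,\delta')\).
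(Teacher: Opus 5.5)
Your proposal is correct and follows essentially the paper's proof: the same Lyapunov norm $\norm{v}'_b=\norm{v}^s_b+\norm{v}^u_b$, the same truncation-plus-exponential-tail argument for \ref{ax:norm}, and the same chain of inequalities for (ii). The continuity step is organized slightly differently — you first prove that $v\mapsto P(\pi(v))v$ is continuous on $\EE$ and then work on the total space, while the paper proves continuity along a local section and transfers it via the reverse triangle inequality — and one estimate needs correcting. \cref{rem:PQ-bounds} applies only to uniform EDs; under an NUED, setting $t=0$ gives only $\norm{P(b)}\le DN(b)$, so your bound should read $\norm{Pv_n-P\sigma(\pi v_n)}\le DN(\pi v_n)\norm{v_n-\sigma(\pi v_n)}$, which still tends to $0$ because $N$ is continuous and hence bounded near $\pi(v_0)$.
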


\begin{proof}
The proof is fundamentally based on the construction of Lyapunov norms which are adapted to the dynamics and can be viewed as the bundle analog of the construction Barreira, Dragi\v{c}evi\'c, and Valls use in the proof of~\cite[Proposition~5.6]{BarreiraEtAl2018AdmissibilityHyperbolicity}.
Assume first that \(\Phi\) admits an \(\varepsilon\)-NUED with constants \((P,D,\varepsilon,\delta)\), and let \(N\) be a witnessing \(\varepsilon\)-slowly varying function. Fix \(\delta'\in(0,\delta)\). Define
\begin{align*}
  \|v\|^s_b &:= \sup_{t \geq 0} e^{\delta't}\|\Phi(t,b)P(b)v\|_{b\cdot t}, \\
  \|v\|^u_b &:= \sup_{t\ge0} e^{\delta't} \|\Phi(-t,b)Q(b)v\|_{b\cdot(-t)},
\end{align*}
and \[
  \|v\|'_b:=\|v\|^s_b+\|v\|^u_b.
\]

At \(t=0\), \[
  \|v\|_b \leq \|P(b)v\|_b+\|Q(b)v\|_b \leq \|v\|'_b.
\]
The NUED estimates give \begin{equation}\label{eq:Lyap1}
  \|v\|^s_b,\ \|v\|^u_b \leq D N(b)\|v\|_b,
\end{equation}
hence \[
  \|v\|_b \leq \|v\|'_b \leq 2D N(b)\|v\|_b.
\]
Since \(2DN\) is again \(\varepsilon\)-slowly varying, the new norm is \(\varepsilon\)-equivalent to the old one.

It remains to check that \(\|\cdot\|'\) is a Banach bundle norm, in particular the norm is continuously varying over \( B\). The argument is the same for the stable and unstable seminorms, so we only write the stable one. Let \(\tau:W_0 \to \mathcal{E}\) be a continuous local section. 
Let \( b_0 \in B\) and choose a compact neighborhood of \( b_0 \), say \( K \subset W_0\), which exists by our assumptions. Define \[
  \varphi(b,t) := e^{\delta't} \|\Phi(t,b)P(b)\tau(b)\|_{b\cdot t}
,\] which is continuous on \(K \times [0,\infty)\) as the composition of continuous functions, since \( b \mapsto P(b)\tau(b)\) is continuous by definition. Hence, for each \( T > 0\) the map \[
  S_T(b) =\sup_{0 \leq t \leq T}\varphi(b,t)
\]
is continuous on \(K\). Moreover, \[
  \varphi(b,t) \leq D N(b)e^{-(\delta-\delta')t}\|\tau(b)\|_b,
\] and since \( N\) and \( \|\tau(\cdot)\| \) are continuous we have \[
    C_K := \sup_{b \in K} D N(b) \|\tau(b)\| < \infty   
.\] This means now that \[
    \sup_{b \in K, t \geq T} \varphi(b,t)\leq C_K e^{-(\delta-\delta')T} \to 0
\] as \( T \to \infty \). By this estimate we see immediately \[
  | \|\tau(b)\|^{s}_b  - S_T(b)| \leq C_K e^{-(\delta-\delta')T}  \to 0
.\] Hence, \(b\mapsto\|\tau(b)\|^s_b\) is continuous as the uniform limit of continuous functions. To see the continuity of \( \|\cdot \|^{s} \) we invoke \cref{lem:conv-criterion}. Choose \( v_n \to v_0\) with \( b_n := \pi(v_n) \to b_0 = \pi(v_0)\) and a continuous local section \( \tau : W_1 \subset K \to \mathcal{E} \) such that \[
    \tau(b_0)=v_0
\] and up to finitely many terms \( b_n \in W_1\). By \cref{lem:conv-criterion} we have \[
    \|v_n -\tau(b_n)\| \to 0 
.\] Put \( h_n = v_n -\tau(b_n)\). Since \( b_n \to b_0\) the values of \( N(b_n)\) are bounded, hence, by \cref{eq:Lyap1}, we have \[
    \|h_n\|^{s} \leq D N(b_n) \|h_n\| \to 0 
.\] Hence, the inverse triangle inequality gives now \[
  \left| \|v_n\|^{s} - \|\tau(b_n)\|^{s} \right| \leq \|v_n - \tau(b_n)\|^{s} = \|h_n\|^{s} \to 0   
.\] Hence, we have \[
    \|v_n\|^{s} = \|\tau(b_n)\|^{s} + \left( \|v_n\|^{s} - \|\tau(b_n)\|^{s}\right)  
,\] where the first summand is, by continuity of \( \tau\) with respect to \( \|\cdot \|^{s} \), approaching \( \|v_0\|^{s} \) and the second term goes to \( 0\). Thus, we have \[
    \|v_n\|^{s} \to \|v_0\|^{s}
,\] proving the continuity of \( \|\cdot \|^{s} \). The unstable estimate follows thereon analogously. 

The remaining Banach bundle axioms are unchanged: addition and scalar multiplication are the original maps; the zero-neighborhood axiom follows from \(\|v\|_b\le\|v\|'_b\); and completeness of each fiber follows from equivalence of the two norms on that fiber.

Finally, for \(t\geq 0\), \[
  \|\Phi(t,b)P(b)v\|'_{b\cdot t} = \|\Phi(t,b)P(b)v\|^s_{b\cdot t} \leq e^{-\delta't}\|v\|^s_b \leq e^{-\delta't}\|v\|'_b.
\]
The unstable estimate is analogous. Thus, \(\Phi\) has an ED with data \((P,1,\delta')\) in the Lyapunov norm.

Conversely, suppose \(\|\cdot\|'\) is \(\varepsilon\)-equivalent to \(\|\cdot\|\), witnessed by \(N\), and that \(\Phi\) has ED data \((P,D,\delta')\) in \(\|\cdot\|'\). Then, for \(t \geq 0\), \[
  \|\Phi(t,b)P(b)v\|_{b\cdot t} \leq \|\Phi(t,b)P(b)v\|'_{b\cdot t} \leq D e^{-\delta't}\|v\|'_b \leq D N(b)e^{-\delta't}\|v\|_b.
\]
The unstable estimate is the same. Hence, \(\Phi\), viewed with respect to the original norm, admits an NUED with constants \((P,D,\varepsilon,\delta')\).
\end{proof}

This is the Banach-bundle analog of~\cite[Proposition~5.6]{BarreiraEtAl2018AdmissibilityHyperbolicity}; the function \(N\) measures this distortion. For the trivial bundle \(\EE = \R \times X\) with \(N(s) = e^{\varepsilon|s|}\), \cref{eq:NUED-stable} reduces to the classical definition, given for example in~\cite{BarreiraValls2011SimpleProof}.

\begin{remark}
  We do want to remark here that this result has been fundamental for our motivation to develop this theory in the case of non-trivial bundles. This is because this construction, compare with~\cite[Proposition~5.6]{BarreiraEtAl2018AdmissibilityHyperbolicity}, turns trivial bundles \(\R \times X\) into something that can no longer be expressed as a Cartesian product with a single Banach space, but into something generally non-trivial as a Banach bundle. Thus, it makes sense to think of this construction as a fundamentally bundle-theoretic one. We will see the lack of triviality will cause no problems.
\end{remark}

Remarkably, \cref{thm:ED-NUED-equiv} holds for every \(\varepsilon > 0\) without restriction on \(\varepsilon / \delta\). For the linearization theorem (\cref{thm:HG}), the relevant quantity is the dichotomy data in the adapted bundle; starting from NUED, the Lyapunov norms yield \(D = 1\) and rate \(\delta' < \delta\).

\subsection{Strong dichotomies}

Strong dichotomies are the correct strengthening needed to prove higher regularity of the conjugacy. We here choose to introduce the concept in its broadest sense, where we allow different rates for forwards and backwards time on the stable and unstable subbundles respectively. This will later on contribute to the wordiness of the exposition, but will provide the possibility for the most detailed analysis of the dichotomy.

\begin{definition}[Strong ED]\label{def:strongED}
A linear SPF \(\Phi\) has a \emph{strong exponential dichotomy with data \((P,D, \delta_s, \bar\delta_s, \delta_u, \bar\delta_u)\)} if there exist an invariant projection family~\(P\) and constants \(D \geq 1\) and \(0 < \delta_s \leq \bar\delta_s\) and \(0 < \delta_u \leq \bar\delta_u\), such that the \emph{upper bounds with separate rates}
\begin{align}
  \label{eq:strongED-upper-s}
  \norm{\Phi_t\, Pv} &\leq D\, e^{-\delta_s\, t}\, \norm{v} &&(t \geq 0), \tag{strEDup-s}\\ 
  \label{eq:strongED-upper-u}
  \norm{\Phi_{-t}\, Qv} &\leq D\, e^{-\delta_u\, t}\, \norm{v} &&(t \geq 0), \tag{strEDup-u}
\end{align}
and the following \emph{lower bounds} hold:
\begin{align}
  \label{eq:strongED-s}
  \norm{\Phi_t\, Pv} &\geq D^{-1}\, e^{-\bar\delta_s\, t}\,\norm{Pv}  &&(t \geq 0), \tag{strEDlow-s}\\
  \label{eq:strongED-u}
  \norm{\Phi_{-t}\, Qv} &\geq D^{-1}\, e^{-\bar\delta_u\, t}\,\norm{Qv} &&(t \geq 0), \tag{strEDlow-u}
\end{align}
for all \(v \in \EE\).   
In particular, taking \(\delta := \min\{\delta_s, \delta_u\}\) in~\eqref{eq:strongED-upper-s}--\eqref{eq:strongED-upper-u} shows that \(\Phi\) has an ordinary ED (\cref{def:ED}) with data \((P, D, \delta)\). The \emph{spectral-gap ratio} is
\begin{equation}\label{eq:alpha0}
  \alpha_0 := \min\!\left\{ \frac{\delta_s}{\bar\delta_s},\;\frac{\delta_u}{\bar\delta_u} \right\} \in (0, 1].
\end{equation}
\end{definition}

It might seem unintuitive that we choose to define the lower bounds by comparing the propagated norm with the norms of \( Pv\) and \( Qv\) and not to the norm of \( v\). We justify this by arguing that the inconsistency is justified by the convenience that the estimates \cref{eq:strongED-upper-s} and \cref{eq:strongED-upper-u} give us uniform bounds on the norm of the projection for \( t= 0\). 

The strong dichotomy estimates we will assume are not just measures of convenience; they fundamentally allow us to control the behavior of the fixed-point operator defined later more finely. This is due to them preventing superexponential growth and decay. As our next lemma will show, assuming strong dichotomy has very particular growth rate implications necessary for proving higher regularity of the conjugation we will prove exists in \cref{sec:HGT}.

\begin{lemma}\label{lem:growth-SED}
  Assume that \(\Phi\) admits a strong exponential dichotomy with data
\((P,D,\delta_s,\bar\delta_s,\delta_u,\bar\delta_u)\). Then for \(t \geq 0\) and \(z\in \mathcal{E}\), \[
\|\Phi_{-t}Pz\| \leq D^2 e^{\bar\delta_s t}\|z\|, \qquad \|\Phi_t Qz\| \leq D^2 e^{\bar\delta_u t}\|z\|.
\]
Consequently, for every \(\alpha\in(0,1)\), \[
\|\Phi_{-t}z\|^\alpha \leq (D^2)^\alpha e^{\alpha\bar\delta_s t}\|Pz\|^\alpha + D^\alpha e^{-\alpha\delta_u t}\|Qz\|^\alpha,
\]
and symmetrically \[
\|\Phi_t z\|^\alpha \leq D^\alpha e^{-\alpha\delta_s t}\|Pz\|^\alpha + (D^2)^\alpha e^{\alpha\bar\delta_u t}\|Qz\|^\alpha.
\]
\end{lemma}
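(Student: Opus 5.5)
The plan is to invert the lower bounds \eqref{eq:strongED-s} and \eqref{eq:strongED-u} along the invariant subbundles, and then to split a general vector into its stable and unstable parts. For the first estimate, fix \(b\in B\), \(z\in E_b\), \(t\ge0\), and set \(w:=\Phi_{-t}P(b)z\in E_{b\cdot(-t)}\). By invariance of \(P\) (\cref{def:projfamily}), \(w = P(b\cdot(-t))\Phi_{-t}z\), so \(Pw=w\). Applying \eqref{eq:strongED-s} at the base point \(b\cdot(-t)\) to the vector \(w\) and using \(\Phi_t w = P(b)z\) gives
\[
\norm{P(b)z} = \norm{\Phi_t P w} \;\ge\; D^{-1}e^{-\bar\delta_s t}\norm{Pw} = D^{-1}e^{-\bar\delta_s t}\norm{w}.
\]
Rearranging yields \(\norm{\Phi_{-t}Pz}\le D e^{\bar\delta_s t}\norm{Pz}\). \cref{rem:PQ-bounds} gives \(\norm{Pz}\le D\norm{z}\), and combining the two produces the factor \(D^2\). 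The unstable estimate \(\norm{\Phi_tQz}\le D^2e^{\bar\delta_u t}\norm{z}\) follows in the same way. Take \(w:=\Phi_tQz\). Invariance gives \(Qw=w\), and \(\Phi_{-t}w=Qz\). Then \eqref{eq:strongED-u} at \(b\cdot t\) gives \(\norm{Qz}\ge D^{-1}e^{-\bar\delta_u t}\norm{w}\), and we finish with \(\norm{Qz}\le D\norm{z}\).

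For the Hölder-type consequences, write \(z=Pz+Qz\), so that \(\Phi_{-t}z=\Phi_{-t}Pz+\Phi_{-t}Qz\). Since \(0<\alpha<1\), the map \(x\mapsto x^\alpha\) is subadditive on \([0,\infty)\). Together with the triangle inequality this gives
\(\norm{\Phi_{-t}z}^\alpha\le\norm{\Phi_{-t}Pz}^\alpha+\norm{\Phi_{-t}Qz}^\alpha\).

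The stable term needs a version of the first estimate expressed through \(\norm{Pz}\). Apply that estimate to \(Pz\) in place of \(z\); this is legitimate because \(P(Pz)=Pz\). It gives \(\norm{\Phi_{-t}Pz}\le D^2e^{\bar\delta_s t}\norm{Pz}\). (The intermediate bound \(De^{\bar\delta_s t}\norm{Pz}\) is even sharper.) For the unstable term, apply \eqref{eq:strongED-upper-u} to \(Qz\) and use \(Q(Qz)=Qz\). This gives \(\norm{\Phi_{-t}Qz}\le De^{-\delta_u t}\norm{Qz}\). Raising both bounds to the power \(\alpha\) yields the claimed inequality. The forward inequality is symmetric: use \eqref{eq:strongED-upper-s} on \(Pz\) and the unstable growth bound on \(Qz\).

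The only step needing care is the type-checking in the inversion argument. The lower bound must be applied in the fiber over \(b\cdot(-t)\) (resp.\ \(b\cdot t\)), not over \(b\). We must also confirm that the propagated vector lies in the correct invariant subspace, so that \(\norm{Pw}=\norm{w}\); this is exactly where invariance of \(P\) enters. Everything else is routine.
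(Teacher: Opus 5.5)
Your proposal is correct and follows the paper's proof essentially step for step. You invert the lower bounds \eqref{eq:strongED-s} and \eqref{eq:strongED-u} using invariance of $P$ to get the growth bounds, then split $z=Pz+Qz$ and combine subadditivity of $r\mapsto r^\alpha$ with the upper dichotomy estimates. Your explicit check that $w=\Phi_{-t}Pz$ lies in $\mathrm{im}\,P(b\cdot(-t))$, so that $\norm{Pw}=\norm{w}$, makes rigorous a step that the paper's one-line computation leaves implicit.
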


\begin{proof}
We prove the backward estimate on the stable component; the forward
estimate on the unstable component is identical, with
\(P,\delta_s,\bar\delta_s\) replaced by \(Q,\delta_u,\bar\delta_u\).

Fix \(z\in \mathcal{E}\) and \(t \geq 0\). Now calculate, using invariance of \( P\), \[
   \|Pz\|  =\|\Phi_{t}\Phi_{-t}Pz\| \geq D^{-1}e^{- \overline{\delta}_s t} \|\Phi_{-t}Pz\|.  
\] 
Rearranging, \[
  \|\Phi_{-t}Pz\| \leq D e^{ \overline{\delta}_s t} \|Pz\| \leq D^{2}e^{ \overline{\delta}_s t} \|z\|. 
\]

Now decompose \(z=Pz+Qz\). Then \[
  \|\Phi_{-t}z\| \leq \|\Phi_{-t}Pz\|+\|\Phi_{-t}Qz\|.
\]
Since \(0<\alpha<1\), the function \(r\mapsto r^\alpha\) is
subadditive: \[
  (a+b)^\alpha\le a^\alpha+b^\alpha.
\]
Therefore, \[
  \|\Phi_{-t}z\|^\alpha \leq \|\Phi_{-t}Pz\|^\alpha+ \|\Phi_{-t}Qz\|^\alpha,
\]
and inserting~\eqref{eq:strongED-upper-u} gives \[
  \|\Phi_{-t}z\|^\alpha \leq (D^2)^\alpha e^{\alpha\bar\delta_st}\|Pz\|^\alpha + D^\alpha e^{-\alpha\delta_ut}\|Qz\|^\alpha
.\]
The forward-time estimate follows in the same way, using \(z=Pz+Qz\), the forward stable ED estimate~\eqref{eq:strongED-upper-s}, and the forward unstable growth estimate.
\end{proof}

For \emph{autonomous} systems (\(\dot x = Ax\)), the eigenvalues
simultaneously determine both rates, so the strong ED is
automatic.  The distinction becomes non-trivial only in the
nonautonomous setting, where super-exponential growth and decay
can occur.

The spectral-gap ratio \(\alpha_0\) should be contrasted with the
Sternberg non-resonance condition~\cite{Sternberg1958StructureLocal} for
smooth linearization: H\"older regularity is controlled by
quantitative backward growth, while smooth regularity requires
algebraic solvability of homological equations.  The two
conditions are complementary.

\subsection{Strong nonuniform dichotomies}
\label{subsec:NUED-Hoelder}

Strong NUED can be converted into strong ED after passing to suitably chosen two-sided Lyapunov norms. The ordinary Lyapunov norms of \cref{thm:ED-NUED-equiv} are insufficient, because they adapt only the contracting directions and leave a nonuniform factor in the reverse-growth estimates. But this can be remedied! In fact, we can prove an analogous result to \cref{thm:ED-NUED-equiv} in which a strong NUED is turned into a strong ED in an adapted Banach bundle structure. 

\begin{definition}[Strong NUED]
A linear SPF \(\Phi\) has a strong nonuniform exponential dichotomy with data \[
(P,D,\varepsilon,\delta_s,\bar\delta_s,\delta_u,\bar\delta_u)
\] if there exist an invariant projection \( P\), an \( \varepsilon\)-slowly varying function \(N\) and constants \( D \geq 1\) and \(0 < \delta_s \leq \bar\delta_s\) and \(0 < \delta_u \leq \bar\delta_u\) such that
\begin{align}
\label{eq:strongNUED-upper-s}\|\Phi_tPv\| &\leq D N(b)e^{-\delta_s t}\|v\|, &&(t \geq 0)  \tag{strNUEDup-s}\\ 
\label{eq:strongNUED-upper-u}\|\Phi_{-t}Qv\| &\leq D N(b)e^{-\delta_u t}\|v\|, &&(t \geq 0)  \tag{strNUEDup-u}
\end{align}
and
\begin{align}
\label{eq:strongNUED-lower-s}\|\Phi_tPv\| &\geq D^{-1}N(b)^{-1}e^{-\bar\delta_s t}\|Pv\|, &&(t \geq 0) \tag{strNUEDlow-s}\\
\label{eq:strongNUED-lower-u}\|\Phi_{-t}Qv\| &\geq D^{-1}N(b)^{-1}e^{-\bar\delta_u t}\|Qv\|, &&(t \geq 0)   \tag{strNUEDlow-u}
\end{align}
for all \(v \in \mathcal{E}\).
\end{definition}

This definition, while new, provides a natural extension of the notion found 
in the literature~\cite{BarreiraEtAl2018AdmissibilityHyperbolicity,BarreiraValls2011SimpleProof}. 

We are now able to introduce the strong dichotomy analog of \cref{thm:ED-NUED-equiv}.

\begin{theorem}[Strong Lyapunov norms]\label{thm:StNUED-StED-Equiv}
Assume that \(\Phi\) has a strong nonuniform exponential dichotomy with data \[
(P,D,\varepsilon,\delta_s,\bar\delta_s,\delta_u,\bar\delta_u)
.\]
Then, for all rates with \[
0<\delta_s'<\delta_s, \qquad 0<\delta_u'<\delta_u,
\qquad \bar\delta_s'>\bar\delta_s+\varepsilon, \qquad \bar\delta_u'>\bar\delta_u+\varepsilon,
\]
there exists a \( 2\varepsilon\)-equivalent Banach bundle norm \(\|\cdot\|_b^{\mathrm{str}}\)
on the same underlying bundle, satisfying the estimate \[
  \|v\|_b \leq \|v\|^{\mathrm{str}}_b \leq D N(b)\bigl(\|P(b)v\|+\|Q(b)v\|\bigr) \leq 2D^2N(b)^2\|v\|_b ,
\] such that \(\Phi\) has a strong exponential
dichotomy with uniform data
\[
(P,1,\delta_s',\bar\delta_s',\delta_u',\bar\delta_u')
\]
with respect to \(\|\cdot\|^{\mathrm{str}}\).
\end{theorem}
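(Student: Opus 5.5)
We construct two-sided Lyapunov norms separately on the stable and unstable parts and add them. For \(v \in E_b\) set
\begin{align*}
  \norm{v}^{s}_b &:= \sup_{t \geq 0} e^{\delta_s' t}\norm{\Phi(t,b)P(b)v}_{b\cdot t} \;+\; \sup_{t \geq 0} e^{-\bar\delta_s' t}\norm{\Phi(-t,b)P(b)v}_{b\cdot(-t)},\\
  \norm{v}^{u}_b &:= \sup_{t \geq 0} e^{\delta_u' t}\norm{\Phi(-t,b)Q(b)v}_{b\cdot(-t)} \;+\; \sup_{t \geq 0} e^{-\bar\delta_u' t}\norm{\Phi(t,b)Q(b)v}_{b\cdot t},
\end{align*}
and \(\norm{v}^{\mathrm{str}}_b := \norm{v}^s_b + \norm{v}^u_b\). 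If the normalization in the statement must hold literally, we will take maxima instead of sums inside each part, or rescale. The lower bound \(\norm{v}_b \leq \norm{v}^{\mathrm{str}}_b\) comes from \(t=0\) and \(v = Pv + Qv\).

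The upper bound controls each of the four suprema separately.
\begin{itemize}
  \item The forward stable term is at most \(DN(b)\norm{Pv}\) by \eqref{eq:strongNUED-upper-s} applied to \(Pv\).
  \item For the backward stable term, write \(Pv = \Phi(t, b\cdot(-t))\,\Phi(-t,b)Pv\) and apply \eqref{eq:strongNUED-lower-s} at the base point \(b\cdot(-t)\). This gives
  \[
  \norm{\Phi(-t,b)Pv} \leq D\,N(b\cdot(-t))\,e^{\bar\delta_s t}\norm{Pv} \leq D\,N(b)\,e^{(\bar\delta_s+\varepsilon)t}\norm{Pv},
  \]
  using that \(N\) is \(\varepsilon\)-slowly varying. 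Since \(\bar\delta_s' > \bar\delta_s+\varepsilon\), the weighted supremum is at most \(DN(b)\norm{Pv}\). This is exactly where the hypothesis \(\bar\delta'>\bar\delta+\varepsilon\) enters.
  \item The two unstable terms are handled symmetrically, with bound \(DN(b)\norm{Qv}\).
\end{itemize}
Summing gives a bound of the form \(cDN(b)(\norm{Pv}+\norm{Qv})\) with a small constant \(c\). Then \(\norm{P v}\leq DN(b)\norm{v}\) from \eqref{eq:strongNUED-upper-s} at \(t=0\), and likewise for \(Q\), yields \(2D^2N(b)^2\norm{v}\). The function \(2D^2N^2\) is \(2\varepsilon\)-slowly varying, so the new norm is \(2\varepsilon\)-equivalent to the original one.

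For the uniform strong ED with \(D=1\), use the cocycle rule. For \(t\geq0\), the forward stable supremum at \(\Phi_t Pv\) equals
\[
\sup_{r\geq0} e^{\delta_s' r}\norm{\Phi_{t+r}Pv} \leq e^{-\delta_s' t}\sup_{r \geq 0} e^{\delta_s' r}\norm{\Phi_r Pv}.
\]
The backward stable supremum at \(\Phi_tPv\) is
\[
\sup_{r\geq0} e^{-\bar\delta_s' r}\norm{\Phi_{t-r}Pv}.
\]
Splitting into \(r\leq t\) and \(r\geq t\), this is at most
\[
\max\Bigl(e^{-\delta_s' t}\sup_{\rho\ge0}e^{\delta_s'\rho}\norm{\Phi_\rho Pv},\; e^{-\bar\delta_s' t}\sup_{\rho\ge0}e^{-\bar\delta_s'\rho}\norm{\Phi_{-\rho}Pv}\Bigr)
\]
after reindexing, using \(\delta_s' \leq \bar\delta_s'\) to absorb the factor \(e^{(\bar\delta_s'-\delta_s')\cdot}\) where needed. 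Hence \(\norm{\Phi_tPv}^{\mathrm{str}} \leq e^{-\delta_s' t}\norm{Pv}^{\mathrm{str}} \leq e^{-\delta_s' t}\norm{v}^{\mathrm{str}}\).

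The lower bound works the same way in reverse. With \(w=\Phi_tPv\), each of the two suprema defining \(\norm{Pv}^{s}\) is bounded by \(e^{\bar\delta_s' t}\) times the corresponding quantity for \(w\). For the backward part this is a direct shift. For the forward part, split \(r\ge t\) and \(r\le t\), the latter estimated by the backward supremum of \(w\), using \(e^{\delta_s' r}\leq e^{\bar\delta_s' t}e^{-\bar\delta_s'(t-r)}\). This gives \(\norm{\Phi_tPv}^{\mathrm{str}}\geq e^{-\bar\delta_s' t}\norm{Pv}^{\mathrm{str}}\). The unstable estimates follow by symmetry. If the \(r\le t\) bookkeeping does not close cleanly with a sum of suprema, we will switch to a single supremum over \(t\in\R\) with the piecewise weight \(e^{\delta' t}\) for \(t\ge0\) and \(e^{\bar\delta' t}\) for \(t<0\). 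For that weight the shift estimates become one-line, because the weight is submultiplicative in the appropriate direction.

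The remaining step is verifying that \(\norm{\cdot}^{\mathrm{str}}\) is a Banach bundle norm, and this is the main technical obstacle. We will copy the argument of \cref{thm:ED-NUED-equiv}. On a compact neighborhood \(K\) and along a continuous local section \(\tau\), the truncated suprema over \(t\in[0,T]\) are continuous. The tails are uniformly small: for example, the backward stable tail is bounded by \(DN(b)e^{-(\bar\delta_s'-\bar\delta_s-\varepsilon)T}\norm{P\tau(b)}\), and this uses the strict inequality. So \(b\mapsto\norm{\tau(b)}^{\mathrm{str}}\) is a uniform limit of continuous functions, hence continuous. Continuity on \(\EE\) then follows from \cref{lem:conv-criterion}, together with the local bound \(\norm{h}^{\mathrm{str}}\leq 2D^2N^2\norm{h}\) and the reverse triangle inequality. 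Axiom \ref{ax:nbhd} follows from \(\norm{\cdot}\leq\norm{\cdot}^{\mathrm{str}}\), and completeness of each fiber from fiberwise norm equivalence.
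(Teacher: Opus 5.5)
Your primary construction has a genuine gap. With the \emph{sum} of the forward and backward suprema inside each part, the strong dichotomy with constant $1$ is not merely hard to verify: it is false. Write $S_+(x)=\sup_{r\ge0}e^{\delta_s' r}\norm{\Phi_r x}$ and $S_-(x)=\sup_{r\ge0}e^{-\bar\delta_s' r}\norm{\Phi_{-r}x}$. Your shift computation correctly yields $S_+(\Phi_t x)\le e^{-\delta_s' t}S_+(x)$ and $S_-(\Phi_t x)\le e^{-\delta_s' t}\max\{S_+(x),S_-(x)\}$. Adding these only gives $e^{-\delta_s' t}\bigl(S_+(x)+\max\{S_+(x),S_-(x)\}\bigr)$, which exceeds $e^{-\delta_s' t}(S_+(x)+S_-(x))$ whenever $S_+(x)>S_-(x)$.

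This actually happens. Over the base $\R$ with translation, take a scalar fiber, $P=\id$, and $\Phi(t,s)x=\exp\bigl(\int_s^{s+t}c\bigr)x$ with $c=0$ on $[0,T]$ and $c=-\delta$ elsewhere. This is a uniform strong ED with $N\equiv1$, $\delta_s=\bar\delta_s=\delta$ and $D=e^{\delta T}$. For $x=1\in E_0$ and $0<t<T$ one finds $S_+(x)=e^{\delta_s' T}$, $S_-(x)=1$, $S_+(\Phi_t x)=e^{\delta_s'(T-t)}$ and $S_-(\Phi_t x)=1$, hence
\[
S_+(\Phi_t x)+S_-(\Phi_t x)=e^{\delta_s'(T-t)}+1>e^{\delta_s'(T-t)}+e^{-\delta_s' t}=e^{-\delta_s' t}\bigl(S_+(x)+S_-(x)\bigr).
\]
The lower bound breaks the same way. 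With $w=\Phi_t x$ you only get $S_+(x)\le e^{\bar\delta_s' t}\max\{S_+(w),S_-(w)\}$, and a short burst of contraction faster than $\bar\delta_s'$ makes the summed inequality fail. Rescaling repairs the factor $2$ you lose in the normalization $\norm{v}^{\mathrm{str}}_b\le DN(b)(\norm{Pv}+\norm{Qv})$. It cannot repair the constant-$1$ property, since that property is scale invariant.

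So your fallback is not optional. Replace each sum by $\max\{S_+,S_-\}$, i.e.\ by the single supremum $\sup_{r\in\R}\rho_s(r)\norm{\Phi_r x}$ with $\rho_s(r)=e^{\delta_s' r}$ for $r\ge0$ and $e^{\bar\delta_s' r}$ for $r\le0$, and analogously $\rho_u$. This is exactly the norm the paper uses. Then $\norm{\Phi_t x}^{s}=\sup_{u\in\R}\rho_s(u-t)\norm{\Phi_u x}$. Because $\log\rho_s$ is piecewise linear with slopes in $[\delta_s',\bar\delta_s']$, you have $e^{\delta_s' t}\le\rho_s(u)/\rho_s(u-t)\le e^{\bar\delta_s' t}$ for $t\ge0$, which gives both strong-ED bounds with constant $1$ in one line. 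Each one-sided supremum is at most $DN(b)\norm{Pv}$ by your estimates (including your use of $\bar\delta_s'>\bar\delta_s+\varepsilon$), so the stated chain of inequalities holds exactly. With this change, your remaining steps match the paper's proof: the bounds, the $2\varepsilon$-equivalence via $2D^2N^2$, and the continuity argument through truncation and uniformly small tails.
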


\begin{proof}
For \(x\in E_b^s\), define \[
\|x\|_{s,b}^{\mathrm{str}} := \sup_{t\in\mathbb R} \rho_s(t)\|\Phi(t,b)x\|_{b\cdot t},
\]
where \[
\rho_s(t) = \begin{cases}
e^{\delta_s't}, & t\ge0,\\
e^{\bar\delta_s't}, & t\le0.
\end{cases}
\]
For \(y\in E_b^u\), define \[
\|y\|_{u,b}^{\mathrm{str}} := \sup_{t\in\mathbb R} \rho_u(t)\|\Phi(t,b)y\|_{b\cdot t},
\]
where \[
\rho_u(t) =
\begin{cases}
e^{-\bar\delta_u't}, & t\ge0,\\
e^{-\delta_u't}, & t\le0.
\end{cases}
\]
Finally set \[
\|v\|_b^{\mathrm{str}} := \|P(b)v\|_{s,b}^{\mathrm{str}} + \|Q(b)v\|_{u,b}^{\mathrm{str}}.
\]

The finiteness and continuity of these norms are proven analogously to the adapted norms of \cref{thm:ED-NUED-equiv}. In fact also the proof of strong ED of \( \Phi\) carries over using estimates of the form \[
    e^{- \overline{\delta}_s' \tau} \|\Phi(-\tau,b)x\| \leq D N(b) e^{- \left( \overline{\delta}_s' - \overline{\delta}_s - \varepsilon \right) \tau } \|x\|  
\] for \( x \in E^{s}_b\) and \( \tau \geq 0\), and the analog for the unstable direction. These bounds suffice to obtain analogous tail estimates to the ones proven in \cref{thm:ED-NUED-equiv}. 
\end{proof}

\begin{remark}\label{rem:strongLyap-known}
  We want to remark at this point that such a theorem as above is by no means surprising. In fact the argumentation done in~\cite{BarreiraValls2011SimpleProof} boils down to the same thought. Our extension here only really remarks that this formalism is applicable more broadly and is fundamentally changing the geometry of the space we are working on. This, again, is the strength of the Banach bundle formalism. It not only allows us to give a broader definition of non-autonomous dynamics, but also turns these known constructions into geometric transformations intrinsic to the class of Banach bundles.
\end{remark}

\section{Bundle Hartman--Grobman theorems}\label{sec:HGT}

There have been several Hartman--Grobman type theorems beyond the classical setting. For example Bates and Lu~\cite{BatesLu1994HartmanGrobman} obtain a result for autonomous evolution equations on a fixed Hilbert space equipped with an inertial manifold, Pötzsche and Russ~\cite{PotzscheRuss2016TopologicalDecoupling} work on two-parameter semigroups which are generated by families of unbounded operators on a fixed Banach space by utilizing the dichotomy spectrum explicitly and Coayla-Terán, Mohammed, and Ruffino~\cite{Coayla-TeranEtAl2007HartmanGrobman} are laying out a linearization result for random dynamical systems, compare~\cite{Arnold1998RandomDynamical}, which is deeply related to our work, as their \emph{random norms} provide utility analogously to our renorming procedure. Backes and Dragičević provide a similar result in the discrete time case~\cite{BackesDragicevic2021LinearizationInfinitedimensional}. 
Furthermore, Backes, Dragičević, and Palmer~\cite{BackesEtAl2021LinearizationHolder} have given Hölder linearization results of non-autonomous systems. 

In this section we will prove a Hartman--Grobman type theorem on general Banach bundles which allows for fiberwise-continuous, orbitwise measurable perturbations of the linear skew-product flow, to allow for the theorem to be applied to control theory in \cref{sec:App}, building substantially on the work of Barreira and Valls~\cite{BarreiraValls2011SimpleProof}.

For this it is necessary to broaden the class of continuous bounded bundle maps to a slightly more general class. 

\begin{definition}[Carath\'eodory enlargement]\label{def:BSpace}
\label{def:Caratheodory-enlargement}
Let \((\mathcal{E},\pi,B)\) be a Banach bundle over a base flow \(\theta_t b=b\cdot t\), and let \(\Phi\) be a linear skew-product flow on \(\mathcal{E}\).  We denote by \(\mathcal B\) the space of all maps \[
  \eta: \mathcal{E} \to \mathcal{E}
\]
such that:
\begin{enumerate}[label=\textup{(\roman*)}]
  \item \(\eta\) is fiber-preserving: \[
    \pi(\eta(v))=\pi(v),
  \] i.e.\ a \emph{bundle map};
  \item \(\eta\) vanishes on the zero section: \[
    \eta(0_b)=0_b \qquad (b\in B);
  \]
  \item \(\eta\) is bounded: \[
    \|\eta\|_{\mathcal B} := \sup_{v\in \mathcal{E}}\|\eta(v)\| <\infty;
  \]
  \item \(\eta\) is fiberwise continuous, i.e. \[
    \eta|_{E_b}:E_b\to E_b
  \]
  is continuous for every \(b\in B\);
  \item \(\eta\) is orbitwise strongly measurable, i.e.\ for every
  \(v\in E_b\), the map \[
    t\longmapsto \Phi_{-t}\eta(\Phi_t v)
  \]
  is strongly measurable as an \(E_b\)-valued map.
\end{enumerate}
\end{definition}

Similarly to the standard result that the space of bounded measurable maps is a Banach space when equipped with the sup-norm, we have the following.

\begin{lemma}[\(\mathcal{B}\) is Banach]
\((\mathcal{B},\|\cdot\|_{\mathcal{B}})\) is a Banach space.
\end{lemma}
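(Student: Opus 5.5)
We prove that \(\mathcal B\) is a vector space on which \(\|\cdot\|_{\mathcal B}\) is a norm, and then that Cauchy sequences converge. The argument follows the classical proof that bounded measurable functions with the sup-norm form a Banach space. The only bundle-specific point is that all operations are performed fiberwise, which is possible because each \(\eta\in\mathcal B\) is a bundle map.

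\emph{Vector space and norm.} For \(\eta,\zeta\in\mathcal B\) and \(\lambda\in\R\) we define \((\eta+\lambda\zeta)(v):=\eta(v)+\lambda\zeta(v)\). The right-hand side is formed in \(E_{\pi(v)}\), which is legitimate by (i). We check (i)--(v) for this map.
\begin{itemize}
\item Properties (i) and (ii) are immediate.
\item Property (iii) follows from the triangle inequality in each fiber.
\item Property (iv) holds because addition and scalar multiplication are continuous on the Banach space \(E_b\).
\item For (v), fix \(v\in E_b\). Since \(\Phi_{-t}\) is linear from \(E_{b\cdot t}\) to \(E_b\), we have
\[
\Phi_{-t}(\eta+\lambda\zeta)(\Phi_t v)=\Phi_{-t}\eta(\Phi_t v)+\lambda\,\Phi_{-t}\zeta(\Phi_t v).
\]
A linear combination of strongly measurable \(E_b\)-valued maps is strongly measurable.
\end{itemize}
The norm axioms are clear. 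Definiteness holds because \(\|\eta\|_{\mathcal B}=0\) forces \(\eta(v)=0_{\pi(v)}\) for every \(v\).

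\emph{Completeness.} Let \((\eta_n)\) be Cauchy in \(\mathcal B\). For each \(v\in E_b\), the sequence \((\eta_n(v))\) is Cauchy in the Banach space \(E_b\), so it has a limit, which we call \(\eta(v)\in E_b\). This gives (i) at once, and (ii) holds because \(\eta_n(0_b)=0_b\) for all \(n\). The standard \(\varepsilon/2\)-argument yields \(\sup_v\|\eta_n(v)-\eta(v)\|\to0\). From this uniform convergence we get (iii), and also \(\|\eta_n-\eta\|_{\mathcal B}\to0\) once \(\eta\in\mathcal B\) is established. Property (iv) follows because \(\eta|_{E_b}\) is a uniform limit of continuous maps \(E_b\to E_b\).

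\emph{Measurability.} Property (v) is the only point needing care, and we expect it to be the main one. Fix \(v\in E_b\). For each \(t\), the operator \(\Phi_{-t}=\Phi(-t,b\cdot t)\) is bounded, so
\[
\Phi_{-t}\eta_n(\Phi_t v)\longrightarrow \Phi_{-t}\eta(\Phi_t v) \quad\text{in } E_b
\]
for every fixed \(t\). Pointwise convergence does not need uniformity in \(t\). The limit is therefore a pointwise (everywhere) limit of strongly measurable \(E_b\)-valued functions on \(\R\). By the Pettis-type fact that pointwise limits of strongly measurable functions are strongly measurable, the limit map is strongly measurable. Concretely, strong measurability is equivalent to being a pointwise limit of simple functions, or equivalently to being measurable and essentially separably valued. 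Both properties pass to pointwise limits: the union of the countably many separable ranges is separable, and measurability is preserved under limits.

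Hence \(\eta\in\mathcal B\) and \(\eta_n\to\eta\) in \(\mathcal B\), so \((\mathcal B,\|\cdot\|_{\mathcal B})\) is a Banach space.
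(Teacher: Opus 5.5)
Your proposal is correct and follows essentially the same route as the paper. You take fiberwise limits in each Banach space \(E_b\), get uniform convergence from the Cauchy property, obtain fiberwise continuity as a uniform limit, and get orbitwise strong measurability as an everywhere pointwise limit of strongly measurable \(E_b\)-valued maps, using boundedness of \(\Phi_{-t}\) for fixed \(t\). Your Pettis-type justification (measurable and essentially separably valued) simply spells out the standard fact the paper cites without proof.
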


\begin{proof}
It is immediate from the fiberwise linear structure that \(\mathcal B\) is a normed vector space.  Indeed, fiber-preserving maps may be added and scaled fiberwise; boundedness, vanishing on the zero section, fiberwise continuity, and orbitwise strong measurability are preserved under finite linear combinations.

Let \((\eta_n)\) be Cauchy in \(\mathcal B\).  For each \(v\in E_b\), the sequence \((\eta_n(v))\) lies in the single Banach space \(E_b\) and satisfies \[
  \|\eta_n(v)-\eta_m(v)\| \leq \|\eta_n-\eta_m\|_{\mathcal B}.
\]
Hence, \((\eta_n(v))\) converges in \(E_b\). Define \[
  \eta(v):=\lim_{n\to\infty}\eta_n(v).
\]
Then \(\eta\) is fiber-preserving and \(\eta(0_b)=0_b\).

We claim that \(\eta_n\to\eta\) in \(\mathcal B\)-norm.  Given \(\varepsilon>0\), choose \(N\) such that \[
  \|\eta_n-\eta_m\|_{\mathcal B}<\varepsilon \qquad(n,m\geq N).
\]
Fix \(n\geq N\).  Passing to the limit \(m\to\infty\) in each fiber gives \[
  \|\eta_n(v)-\eta(v)\|\leq \varepsilon \qquad(v\in \mathcal{E}).
\]
Taking the supremum over \(v\in \mathcal{E}\), we obtain \[
  \|\eta_n-\eta\|_{\mathcal B}\leq \varepsilon \qquad(n\geq N).
\]
Thus, \(\eta_n\to\eta\) uniformly in fiber norm.  In particular, \(\eta\) is bounded.

It remains to check that \(\eta\in\mathcal B\). Fiberwise continuity follows because, for each \(b\in B\), \[
  \eta_n|_{E_b}\to\eta|_{E_b}
\]
uniformly, and a uniform limit of continuous maps between Banach spaces is continuous.

Finally fix \(v\in E_b\).  For each \(n\), the map \[
  f_n(t):=\Phi_{-t}\eta_n(\Phi_t v)
\]
is strongly measurable as an \(E_b\)-valued map. Since \(\eta_n\to\eta\) pointwise and \(\Phi_{-t}:E_{b\cdot t}\to E_b\) is continuous linear for each fixed \(t\), \[
  f_n(t)\to f(t):=\Phi_{-t}\eta(\Phi_t v) \qquad\text{in }E_b.
\]
Thus, \(f\) is the pointwise limit of strongly measurable \(E_b\)-valued maps, hence is strongly measurable.  Therefore, \(\eta\) is orbitwise strongly measurable.

So \(\eta\in\mathcal B\), and \(\eta_n\to\eta\) in \(\|\cdot\|_{\mathcal B}\).  Hence, every Cauchy sequence in \(\mathcal B\) converges in \(\mathcal B\), proving completeness.
\end{proof}

\begin{lemma}\label{lem:C-closed}
    The space of bounded continuous bundle maps which vanish on the zero section \( \mathcal{C}\) is a closed subspace of \( \mathcal{B}\).
\end{lemma}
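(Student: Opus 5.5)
Two things have to be shown: that \(\mathcal C \subseteq \mathcal B\), and that \(\mathcal C\) is closed under \(\|\cdot\|_{\mathcal B}\)-limits.

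\emph{Inclusion.} Let \(\eta \in \mathcal C\), i.e.\ \(\eta\colon\EE\to\EE\) is continuous, bounded, fiber-preserving and vanishes on the zero section. Items (i)--(iii) of \cref{def:BSpace} hold by assumption. Item (iv) holds because \(E_b\) carries the subspace topology from \(\EE\), and on a single fiber this is the norm topology by \cref{lem:conv-criterion}. For item (v), fix \(v\in E_b\) and consider \(t\mapsto \Phi_{-t}\eta(\Phi_t v)\). This is the composition \(t\mapsto (t,\eta(\Phi(t,v)))\mapsto \Phi(-t,\eta(\Phi(t,v)))\). It is continuous from \(\R\) into \(\EE\), since \(\Phi\) is jointly continuous, and it takes values in \(E_b\). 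Again by \cref{lem:conv-criterion}, applied with a constant-base point (or just by the subspace topology argument), continuity into \(\EE\) with values in \(E_b\) is continuity into the Banach space \(E_b\). A continuous map into a Banach space is strongly measurable, because its image of \(\R\) is separable (\(\R\) being separable) and it is Borel. Hence \(\eta\in\mathcal B\). Closure of \(\mathcal C\) under linear combinations follows from \ref{ax:add} and \ref{ax:scal}, so \(\mathcal C\) is a linear subspace.

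\emph{Closedness.} Let \(\eta_n\in\mathcal C\) with \(\eta_n\to\eta\) in \(\mathcal B\). Then \(\eta\) is a bounded bundle map vanishing on the zero section, so only continuity of \(\eta\colon\EE\to\EE\) needs checking. We use the continuity criterion stated after \cref{lem:uniform-limit}. Fix \(v_0\in\EE\) with \(b_0=\pi(v_0)\), and by \cref{prop:local-sections} pick a continuous local section \(\sigma\) on \(W\ni b_0\) with \(\sigma(b_0)=v_0\).

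First, \(b\mapsto \eta_n(\sigma(b))\) are continuous sections converging uniformly to \(b\mapsto\eta(\sigma(b))\). Hence the latter is continuous by \cref{lem:uniform-limit}.

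Second, let \(v_k\to v_0\) with \(b_k=\pi(v_k)\). We estimate
\[
\|\eta(v_k)-\eta(\sigma(b_k))\| \le 2\|\eta-\eta_n\|_{\mathcal B} + \|\eta_n(v_k)-\eta_n(\sigma(b_k))\|.
\]
Fix \(n\) first and then let \(k\to\infty\). It therefore suffices to show that the last term tends to \(0\) for fixed continuous \(\eta_n\).

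\emph{Main obstacle.} This last step is where care is needed. Both \(v_k\to v_0\) and \(\sigma(b_k)\to v_0\) in \(\EE\), so continuity gives \(\eta_n(v_k)\to\eta_n(v_0)\) and \(\eta_n(\sigma(b_k))\to\eta_n(v_0)\). The issue is that the two sequences lie in the same fibers \(E_{b_k}\), and we need their difference to tend to the zero section in norm. Apply \ref{ax:add} to the pairs \((\eta_n(v_k),-\eta_n(\sigma(b_k)))\in\EE\times_B\EE\), together with \ref{ax:scal}. These pairs converge to \((\eta_n(v_0),-\eta_n(v_0))\), so the difference converges to \(0_{b_0}\) in \(\EE\). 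By \ref{ax:norm} its norm then tends to \(0\).

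Combining this with the first step and \cref{lem:conv-criterion}, \(\eta(v_k)\to\eta(v_0)\). Sequences suffice by \cref{lem:first-countable}. Thus \(\eta\in\mathcal C\), and \(\mathcal C\) is closed.
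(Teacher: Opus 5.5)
Your proof is correct, and it is more complete than the paper's. The paper starts directly from continuous $\eta_N\in\mathcal B$ and never checks items (iv) and (v) of \cref{def:BSpace}, nor that $\mathcal C$ is a linear subspace; you verify both. For the closedness part, your route differs from the paper's in how the comparison section is chosen.

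\emph{The paper's route.} It works entirely in the target fibers. Via \cref{prop:local-sections} it picks an arbitrary continuous section $\tau$ through $\eta(v_0)$ and an auxiliary section $\tau_N$ through $\eta_N(v_0)$. It then splits $\|\eta(v_k)-\tau(b_k)\|$ into three terms. The middle term vanishes by \cref{lem:conv-criterion} applied to the continuous $\eta_N$. The last term is controlled by continuity of $b\mapsto\|\tau_N(b)-\tau(b)\|$ at $b_0$, which yields $\limsup_k\le 2\|\eta_N-\eta\|_{\mathcal B}$.

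\emph{Your route.} You pull back a single section $\sigma$ through $v_0$ from the domain and use the sections $\eta_n\circ\sigma$ and $\eta\circ\sigma$. The bridge term is then trivially bounded by $\|\eta_n-\eta\|_{\mathcal B}$, with no continuity argument needed. The price is that you must know $\eta\circ\sigma$ is continuous, which you obtain from \cref{lem:uniform-limit}. You then handle $\|\eta_n(v_k)-\eta_n(\sigma(b_k))\|\to 0$ directly from \ref{ax:norm}--\ref{ax:scal}.

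\emph{Comparison.} Your version follows the three-step continuity recipe stated after \cref{lem:uniform-limit} and is the more modular of the two. The paper's version is self-contained: it needs only \cref{lem:conv-criterion} and section existence, and avoids \cref{lem:uniform-limit}. That lemma's proof is the same $\varepsilon/3$-type estimate anyway.
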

\begin{proof}
  This is a classic application of \cref{lem:conv-criterion}. Take \( \eta_N \in \mathcal{B}\) continuous and \( \eta_N \to \eta\) in \( \mathcal{B}\).
  Let \( v_k \to v_0\) in \( \mathcal{E}\) with basepoints \( b_k \to b_0\). Choose a local section \( \tau \) with \[
      \tau(b_0) = \eta(v_0)
  .\] Now it is clear, since \( \eta\) is a bundle map, that \[
      \pi(\eta(v_k)) = b_k \to b_0 = \pi(\eta(v_0))
  .\] Fix an index \( N \in \N\) and choose a local section \( \tau_N : W_N \to \mathcal{E}\) such that \[
      \tau_N(b_0)=\eta_N(v_0)
  \] and estimate
  \begin{align*}  
      &\|\eta(v_k) - \tau(\pi(\eta(v_k)))\| = \|\eta (v_k) - \tau(b_k)\| \\ 
      &\leq \|\eta(v_k) - \eta_N(v_k)\| + \|\eta_N(v_k) - \tau_N(b_k)\| + \| \tau_N(b_k) -\tau(b_k)\|
  \end{align*}   
  Since \( \eta_N\) is continuous, invoking \cref{lem:conv-criterion} shows that the second summand goes to \( 0\). Furthermore, we get \[
      \|\tau_N(b_k)- \tau(b_k)\| \to \|\tau_N(b_0)- \tau(b_0)\| = \|\eta_N(v_0)- \eta(v_0)\| \leq \|\eta_N -\eta\|_{\mathcal{B}}    
  ,\] which means that from the first and third summands we get \[ 
  \limsup_{k \to \infty } \|\eta(v_k) -\tau(b_k)\| \leq 2\|\eta_N -\eta\|_{\mathcal{B}}
  .\] Now by assumption \( \eta_N \to \eta\) in \( \mathcal{B}\) for \( N \to \infty \), meaning we get \[
      \|\eta(v_k) -\tau(b_k)\| \to 0 
  ,\] 
  which by \cref{lem:conv-criterion} gives that \( \eta(v_k) \to \eta(v_0)\) as desired.
\end{proof}

\begin{definition}[Carath\'eodory perturbations]\label{def:CarPert}
We call a family of fiber-preserving maps \( \mathcal{R}_b(t,\cdot ) : E_{b \cdot  t} \to E_{b \cdot t}\) a \textit{Carath\'eodory perturbation} if it satisfies the following conditions:
\begin{enumerate}[label=\textup{(\roman*)}]
  \item \emph{measurability:} for every curve \( t \mapsto y(t) \in E_{b \cdot t}\) whose pullback \( t \mapsto \Phi_{-t}y(t) \in E_b\) is strongly measurable, the pullback \( t \mapsto \Phi_{-t}\mathcal{R}_b(t,y(t))\) is strongly measurable as well;
  \item \emph{coherence:} \( \mathcal{R}\) respects the skew-product structure, i.e. \[ \mathcal{R}_{b \cdot s}(t,\cdot ) = \mathcal{R}_b(t+s,\cdot)\] for almost all \( t\) and all \( s\);
  \item \emph{zero section:} \( \mathcal{R}_b(t,0)=0\) for all \( t\);
  \item \emph{Lipschitz condition:} \( \mathcal{R}\) is uniformly Lipschitz, i.e. \[
      \|\mathcal{R}_b(t,v)-\mathcal{R}_b(t,w)\| \leq  \delta_0 \min \{1,\|v-w\| \}
  \] for almost all \( t\).
\end{enumerate}
\end{definition}

\begin{example}\label{ex:pointwise-source}
  In the case of non-autonomous dynamical systems these Carath\'eodory perturbations are usually of the form \( \mathcal{R}_{b}(t,\cdot ) := f|_{E_{\theta_t b}}\) for a continuous bundle map \( f : \mathcal{E} \to \mathcal{E}\). 
\end{example}

\begin{lemma}\label{lem:bdd-orbit-zero}
If \( \Phi\) admits an exponential dichotomy, \( v \in \mathcal{E}\) and  \[
  \sup_{t\in\R}\|\Phi_t v\|<\infty,
\]
then \(v=0\).
\end{lemma}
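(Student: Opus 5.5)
The plan is to split \(v\) along the invariant projection family and show that each component must vanish, using the dichotomy estimates in the appropriate time direction. Write \(b := \pi(v)\), \(M := \sup_{t\in\R}\|\Phi_t v\| < \infty\), and assume \(\Phi\) has an ED with data \((P,D,\delta)\).

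\emph{Step 1: the unstable component.} Fix \(t \geq 0\). By invariance of \(P\) (\cref{def:projfamily}) and the composition rule~\eqref{eq:cocycle-def},
\[
Q(b)v = \Phi_{-t}\,\Phi_t\, Q(b) v = \Phi_{-t}\, Q(b\cdot t)\, \Phi_t v .
\]
Now apply~\eqref{eq:ED-unstable} to the vector \(w := \Phi_t v \in E_{b\cdot t}\). This gives
\[
\|Q(b)v\| \leq D e^{-\delta t}\|\Phi_t v\| \leq D M e^{-\delta t}.
\]
Letting \(t \to \infty\) yields \(Q(b)v = 0\).

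\emph{Step 2: the stable component.} This step is symmetric. For \(t \geq 0\) we have \(P(b)v = \Phi_t\, P(b\cdot(-t))\, \Phi_{-t} v\). Applying~\eqref{eq:ED-stable} to \(w := \Phi_{-t} v \in E_{b\cdot(-t)}\) gives
\[
\|P(b)v\| \leq D e^{-\delta t}\|\Phi_{-t}v\| \leq DMe^{-\delta t} \to 0.
\]
Hence \(P(b)v = 0\), and therefore \(v = P(b)v + Q(b)v = 0\).

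The only point needing care, and the main (mild) obstacle, is type-checking the invariance identity on a nontrivial bundle. The relation \(P(b\cdot t)\Phi(t,b) = \Phi(t,b)P(b)\), with \(Q = \id - P\), together with \(\Phi(t,b)^{-1} = \Phi(-t,b\cdot t)\), yields \(\Phi(-t,b\cdot t)\,Q(b\cdot t) = Q(b)\,\Phi(-t,b\cdot t)\). We also need the analogous identity for \(P\) at the shifted basepoint \(b\cdot(-t)\), obtained by applying invariance with base point \(b\cdot(-t)\) and time \(t\). The estimates~\eqref{eq:ED-stable}--\eqref{eq:ED-unstable} are stated for all \(v\in\EE\), so they apply at the shifted fibers without further uniformity concerns.
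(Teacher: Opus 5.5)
Your proposal is correct and follows the paper's proof essentially line for line. Like the paper, you write \(Q(b)v=\Phi_{-t}Q(b\cdot t)\Phi_t v\) and \(P(b)v=\Phi_t P(b\cdot(-t))\Phi_{-t}v\), apply \eqref{eq:ED-unstable} and \eqref{eq:ED-stable} to get the bound \(DMe^{-\delta t}\), and let \(t\to\infty\); your extra type-checking of the invariance identities on shifted fibers is accurate.
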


\begin{proof}
Set \(b:=\pi(v)\) and \[
  M:=\sup_{t\in\R}\|\Phi_t v\|<\infty.
\]
By invariance of \(Q\), for \(t\geq0\), \[
  Q(b)v=\Phi_{-t}Q(b\cdot t)\Phi_t v.
\]
Hence, the unstable ED estimate gives \[
  \|Q(b)v\|\le De^{-\delta t}\|\Phi_t v\|\le DMe^{-\delta t}\to0,
\]
thus \(Q(b)v=0\). Similarly, \[
  P(b)v=\Phi_tP(b\cdot(-t))\Phi_{-t}v
\]
and the stable ED estimate give \[
  \|P(b)v\|\le De^{-\delta t}\|\Phi_{-t}v\|\le DMe^{-\delta t}\to0.
\]
Thus, \(P(b)v=0\), and therefore \[
  v=P(b)v+Q(b)v=0.
\]
\end{proof}

\begin{lemma}\label{lem:KomGrowth}
  Assuming \( \Phi\) is a linear skew-product flow, we have for every compact subset \( K \subset \R \times B\) \[
      \sup_{(t,c) \in K}\|\Phi(t,c)\| < \infty  
  .\] In particular for every \( b \in B\) and every \( T > 0\) we have\[
      M_T(b):= \sup \{\|\Phi(t,\theta_sb)\| \ |\ |t|\leq T, |s| \leq T   \} < \infty 
  .\]  
\end{lemma}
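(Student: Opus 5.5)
The plan is to argue by contradiction with a uniform boundedness argument carried out fiberwise along sequences, using compactness to pass to a convergent subsequence and continuity of \(\Phi\) on \(\R\times\EE\). Suppose \(\sup_{(t,c)\in K}\|\Phi(t,c)\| = \infty\). Then there are \((t_n,c_n)\in K\) and \(v_n\in E_{c_n}\) with \(\|v_n\|\le 1\) and \(\|\Phi(t_n,c_n)v_n\|\ge n^2\). Since \(K\) is compact and \(\R\times B\) is metric, we may pass to a subsequence with \((t_n,c_n)\to(t_0,c_0)\in K\). Put \(w_n := v_n/n\), so \(\|w_n\|\le 1/n\to 0\) and \(\pi(w_n)=c_n\to c_0\). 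By axiom \ref{ax:nbhd} (equivalently \cref{lem:conv-criterion} applied with the zero section, which is continuous by \ref{ax:scal}), this gives \(w_n\to 0_{c_0}\) in \(\EE\).

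Joint continuity of \(\Phi\colon\R\times\EE\to\EE\) then yields \(\Phi(t_n,w_n)\to\Phi(t_0,0_{c_0}) = 0_{c_0\cdot t_0}\), using linearity on fibers. By \ref{ax:norm} the norm is continuous on \(\EE\), hence \(\|\Phi(t_n,c_n)w_n\|\to 0\). But by construction \(\|\Phi(t_n,c_n)w_n\| = \tfrac1n\|\Phi(t_n,c_n)v_n\|\ge n\to\infty\), which is a contradiction. So the supremum is finite. (Alternatively one could scale by \(\sqrt{\|\Phi v_n\|}\); the \(n^2\) choice just keeps it clean.)

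For the second assertion, I would observe that \(K_T := \{(t,\theta_s b) : |t|\le T,\ |s|\le T\}\) is the continuous image of the compact set \([-T,T]^2\) under \((t,s)\mapsto (t,\theta(s,b))\), using continuity of the base flow \(\theta\). It is therefore compact, and \(M_T(b)\) is exactly the supremum over \(K_T\), which is finite by the first part.

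The only delicate step is justifying \(w_n\to 0_{c_0}\) in the total space without local triviality. Here \ref{ax:nbhd} is precisely what is needed: any open neighborhood of \(0_{c_0}\) contains \(\{v\in\pi^{-1}(U):\|v\|<\varepsilon\}\), and \(w_n\) eventually lies in such a set. Sequential arguments suffice because \(\R\times B\) is metric and \(\EE\) is first countable by \cref{lem:first-countable}.
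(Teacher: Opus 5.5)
Your proof is correct and is essentially the paper's argument: contradiction via sequential compactness of \(K\), rescaling the bad vectors so they converge to \(0_{c_0}\) by (BB4), then joint continuity of \(\Phi\) together with (BB1) to force the image norms to \(0\). The only differences are minor. The paper normalizes by \(\|\Phi(t_n,b_n)v_n\|\) so the images have norm exactly \(1\), where you scale by \(1/n\). You also spell out that \(\{(t,\theta_s b): |t|,|s|\le T\}\) is compact as a continuous image of \([-T,T]^2\), which the paper leaves implicit.
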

\begin{proof}
Assume on the contrary that there exist \( (t_n,b_n) \subset K\) and \( v_n \in E_{b_n}\) such that \( \|v_n\| = 1 \) and \[
    \|\Phi(t_n,b_n)v_n\| \to \infty 
.\] If necessary passing to a subsequence, assume \[
    (t_n,b_n) \to (t,b)
\] and set \[
    u_n := \frac{v_n}{\|\Phi(t_n,b_n)v_n\|}
.\] Then it's clear that \( \|u_n\| \to 0 \) and \( \pi(u_n)=b_n \to b\). By the convergence condition for Banach bundles we have that \[
    u_n \to 0_{b}
.\] Thus, by continuity of \( \Phi\), we must have \[
    \Phi_{t_n}u_n \to \Phi_t 0_b = 0_{\theta_t b} 
,\] but of course by assumption \[
    \|\Phi_{t_n}u_n\|  = \frac{\|\Phi(t_n,b_n)v_n\| }{\|\Phi(t_n,b_n)v_n\| } = 1 
,\] which is a contradiction.      
\end{proof}
Next we will provide an argument using \cref{lem:KomGrowth} and a Picard--Lindelöf type approach showing the uniqueness of cocycles which satisfy a variation-of-constants formula and its continuity implications.

\begin{proposition}
\label{prop:cutoff-flow}
Let \(\mathcal{R}\) be a Carath\'eodory perturbation with constant \(\delta_0\). Then there is exactly one map
\(\Psi:\R \times \mathcal{E} \to \mathcal{E}\) with \(\pi(\Psi_t(v))=\pi(v)\cdot t\) and
\begin{equation}
\label{eq:voc}
  \Psi_t(v)=\Phi_tv+\int_0^t\Phi_{t-s}\,\mathcal{R}_b\bigl(s,\Psi_s(v)\bigr)\,ds
  \qquad(t\in\R,\ v\in E_b),
\end{equation}
which, assuming it is continuous, is a skew-product flow over \(\theta\). Moreover, for every \(v\in E_b\), the pulled-back orbit \(t\mapsto\Phi_{-t}\Psi_t(v)\) belongs to \(C([-T,T],E_b)\) for every \(T>0\). Also, for every \(b\in B\), \(T>0\), and \(|t|\le T\), the map \[
  \Psi_t|_{E_b}:E_b\to E_{b\cdot t}
\]
is continuous.
\end{proposition}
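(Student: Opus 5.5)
The plan is to pull the variation-of-constants formula \cref{eq:voc} back to the fixed fiber \(E_b\) and solve it there by a Picard--Lindel\"of argument. For \(v \in E_b\) write \(u(t) := \Phi_{-t}\Psi_t(v) \in E_b\). Applying \(\Phi_{-t}\) to \cref{eq:voc} and using the composition rule \cref{eq:cocycle-def}, i.e.\ \(\Phi_{-t}\Phi_{t-s} = \Phi_{-s}\) as maps \(E_{b\cdot s}\to E_b\), the equation becomes
\[
  u(t) = v + \int_0^t \Phi_{-s}\,\mathcal{R}_b\bigl(s, \Phi_s u(s)\bigr)\,ds ,
\]
an integral equation in the single Banach space \(E_b\). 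The integral is read as a Bochner integral. The integrand is strongly measurable by the measurability axiom of \cref{def:CarPert}, applied to the curve \(y(s) = \Phi_s u(s)\), whose pullback \(u\) is continuous and hence strongly measurable. It is bounded on \([-T,T]\) by \(M_T(b)\,\delta_0\) by \cref{lem:KomGrowth} together with the Lipschitz and zero-section conditions, which give \(\|\mathcal{R}_b(s,w)\| \le \delta_0\). The right-hand side is therefore continuous in \(t\).

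On \(X_T := C([-T,T],E_b)\) I would define the operator \(\mathcal{T}u\) as the right-hand side above. The Lipschitz condition yields
\[
  \|(\mathcal{T}u_1)(t)-(\mathcal{T}u_2)(t)\| \le \delta_0 M_T(b)^2 \Bigl|\int_0^t \|u_1(s)-u_2(s)\|\,ds\Bigr| .
\]
With \(L := \delta_0 M_T(b)^2\), I would use a Bielecki norm \(\sup_t e^{-2L|t|}\|u(t)\|\), or alternatively induction on iterates giving \((L|t|)^n/n!\). This makes \(\mathcal{T}\) a contraction, so there is a unique fixed point on \([-T,T]\). Uniqueness also gives consistency as \(T\) grows, so \(u\) is defined on \(\R\), and \(\Psi_t(v) := \Phi_t u(t)\) is the unique solution of \cref{eq:voc}. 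Conversely, any solution of \cref{eq:voc} pulls back to a fixed point, which gives the uniqueness claim and the statement \(t\mapsto\Phi_{-t}\Psi_t(v)\in C([-T,T],E_b)\).

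For fiberwise continuity, a Gronwall estimate on two fixed points with initial values \(v,w\) gives \(\sup_{|t|\le T}\|u_v(t)-u_w(t)\| \le e^{LT}\|v-w\|\). Hence \(\|\Psi_t v - \Psi_t w\| \le M_T(b)e^{LT}\|v-w\|\) for \(|t|\le T\), so \(\Psi_t|_{E_b}\) is even Lipschitz.

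It remains to prove the flow property, assuming continuity. The identity \(\Psi_0=\id\) is immediate, and fiber-respecting holds by construction. For \(\Psi_{t+s}=\Psi_t\circ\Psi_s\), fix \(s\) and \(v\in E_b\), and set \(w := \Psi_s(v)\in E_{b\cdot s}\). I would show that \(t\mapsto \Psi_{t+s}(v)\) solves \cref{eq:voc} with base point \(b\cdot s\) and initial value \(w\). To do this, split \(\int_0^{t+s}\) as \(\int_0^s+\int_s^{t+s}\). In the first piece, factor \(\Phi_{t+s-r} = \Phi_t\Phi_{s-r}\), which turns \(\Phi_{t+s}v + \int_0^s\cdots\) into \(\Phi_t w\). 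In the second piece, substitute \(r = s+\sigma\) and use the coherence axiom \(\mathcal{R}_{b\cdot s}(\sigma,\cdot)=\mathcal{R}_b(\sigma+s,\cdot)\). Coherence only holds for almost every \(\sigma\), but that is harmless inside the integral. Uniqueness at the base point \(b\cdot s\) then gives \(\Psi_t(w)=\Psi_{t+s}(v)\). I expect the main obstacle to be this last step, specifically keeping the type-checking straight when moving operators between fibers inside the Bochner integral (\(\Phi_t\) commutes with the integral since it is bounded linear) and verifying that the shifted curve again satisfies the measurability hypothesis. The contraction and Gronwall parts are routine.
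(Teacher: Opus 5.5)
Your proposal is correct and takes the same route as the paper. You pull back to the fixed fiber \(E_b\), bound the integrand and its Lipschitz constant via \(M_T(b)\) from \cref{lem:KomGrowth}, and contract in the Bielecki norm with \(L=\delta_0 M_T(b)^2\). You then glue over \(T\), use Gronwall for fiberwise continuity, and obtain the flow property from uniqueness over the base point \(b\cdot s\). You also spell out two steps the paper only asserts. First, any solution of \eqref{eq:voc} has a continuous pullback, so uniqueness holds among all solutions and not merely within \(C([-T,T],E_b)\). Second, you give the splitting-and-coherence computation showing that \(t\mapsto\Psi_{t+s}(v)\) solves \eqref{eq:voc} over \(b\cdot s\).
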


\begin{proof}
Fix \(v\in E_b\) and set \(y(t):=\Phi_{-t}\Psi_t(v)\in E_b\) for a map
\(\Psi_t(v)\in E_{b\cdot t}\). Applying \(\Phi_{-t}\) shows \eqref{eq:voc} is equivalent to
\begin{equation}
\label{eq:pullback}
  y(t)=v+\int_0^tG(s,y(s))\,ds
\end{equation}
where \( G(s,w):=\Phi_{-s}\,\mathcal{R}_b(s,\Phi_sw)\in E_b\), since \( \Psi_s(v) = \Phi_s y(s)\). 

Fix \(T>0\), \(y\in C([-T,T],E_b)\). By \cref{def:CarPert}, \(s\mapsto G(s,y(s))\) is strongly measurable, and by \cref{lem:KomGrowth} we obtain the uniform estimates \begin{align*}
  \|G(s,x)\| &\leq M_T(b)\delta_0, \\ 
  \|G(s,x)-G(s,y)\|&\leq M_T(b)^{2}\delta_0\|x-y\|
\end{align*}
for almost every \( s \in  [-T,T]\). Thus, \(G(\cdot,y(\cdot))\) is essentially bounded, so \((\mathcal Ty)(t):=v+\int_0^tG(s,y(s))\,ds\) maps \(C([-T,T],E_b)\) into itself, because indefinite Bochner integrals of bounded maps are automatically Lipschitz, hence continuous. 

To show that \( \mathcal{T}\) admits a unique fixed point, choose \(L:=M_T(b)^{2}\delta_0\) and renorm the space \( C([-T,T],E_b)\) equivalently with \[
    \|y\|_\ast:=\sup_{|t|\le T}e^{-2L|t|}\|y(t)\|
.\] Using this norm it is easy to calculate 
\begin{align*}
  \|(\mathcal Ty)(t)-(\mathcal T\tilde y)(t)\| &\leq\Bigl|\int_0^tL\|y(s)-\tilde y(s)\|\,ds\Bigr| \\
  &\leq L\|y-\tilde y\|_\ast\Bigl|\int_0^te^{2L|s|}ds\Bigr| \\
  &\leq\tfrac12e^{2L|t|}\|y-\tilde y\|_\ast,
\end{align*}
so \(\mathcal T\) is a contraction and \eqref{eq:pullback} has a unique solution, say \(y_T\), on \([-T,T]\). Solutions for different \(T\) naturally agree on overlaps by uniqueness. Hence, glueing solutions \(y\) on all of \(\R\) together allows us to define \(\Psi_t(v):=\Phi_ty(t)\). The same estimate gives continuous dependence on the initial value. Indeed, if \(y_v,y_w\) solve \eqref{eq:pullback} with initial data \(v,w\in E_b\), then, with \(L:=M_T(b)^2\delta_0\), Gronwall's lemma gives \[
  \|y_v(t)-y_w(t)\|\leq e^{L|t|}\|v-w\|,\qquad |t|\leq T
.\]
Since \(\Psi_t(v)=\Phi_t y_v(t)\), it follows that \(\Psi_t|_{E_b}:E_b\to E_{b\cdot t}\) is continuous for each fixed \(t\).

For fixed \(s\), both \(t\mapsto\Psi_{s+t}(v)\) and \(t\mapsto\Psi_t(\Psi_s(v))\) solve \eqref{eq:voc} with the same initial value over the basepoint \(b\cdot s\) and \(\Psi_0=\id\) is immediate from \eqref{eq:voc}. This finishes the proof.
\end{proof}

\begin{remark}[Standing assumptions for the rest of the section]\label{rem:StanAss}
  Throughout the rest of the section we fix the standing assumptions: 
  \begin{enumerate}
    \item The linear SPF \( \Phi\) admits an exponential dichotomy with projections \( P\) and \( Q\); in \cref{subsec:Holder-HG} this dichotomy is strong. 
    \item We are assuming a Carath\'eodory perturbation \( \mathcal{R}\) as in \cref{def:CarPert} and a non-linear cocycle \( \Psi\) over \( \theta\) that satisfies the \textit{Variation of Constants}-formula \[
       \Psi_t(x)=\Phi_t x+\int_0^t\Phi_{t-s}\mathcal{R}_b(s,\Psi_s(x))\,ds,\qquad x\in E_b 
    .\] In \cref{subsec:local-HG}, this is weakened to a local condition. If this cocycle is continuous, it is a proper skew-product flow and in particular this will always be the case when the continuity conditions \[
        F(\mathcal{C}) \subset \mathcal{C}, \qquad \zeta \in \mathcal{C}
    \] are satisfied, which they will be in all explored examples.
  \end{enumerate}
  
\end{remark}

\subsection{Global Hartman--Grobman theorem}

\begin{definition}[Fixed-point operator]\label{def:fixed-point-operator}
Given an \(\eta\in\mathcal B\), define
\begin{align*}
  (F\eta)(v):= &\int_0^\infty \Phi_\sigma P\,\mathcal{R}_b(-\sigma,(\id+\eta)(\Phi_{-\sigma}v))\,d\sigma \\
  -&\int_0^\infty \Phi_{-\sigma}Q\,\mathcal{R}_b(\sigma,(\id+\eta)(\Phi_\sigma v))\,d\sigma,
  \qquad v\in E_b.
\end{align*}
\end{definition}

\begin{lemma}[Well-definedness of \(F\)]\label{lem:F-welldef}
For every \(\eta\in\mathcal B\), one has \(F\eta\in\mathcal B\), and
\[
  \|F\eta\|_{\mathcal B}\le \frac{2D\delta_0}{\delta}.
\]
\end{lemma}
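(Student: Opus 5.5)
We check the five defining properties of \(\mathcal B\) for \(F\eta\) one at a time, in the order of \cref{def:BSpace}, starting with the norm bound because it also gives convergence of the integrals. Fix \(v\in E_b\). In the first integral the argument \(w_\sigma:=(\id+\eta)(\Phi_{-\sigma}v)\) lies in \(E_{b\cdot(-\sigma)}\). By the Lipschitz condition of \cref{def:CarPert} together with \(\mathcal R(\cdot,0)=0\), we have \(\|\mathcal R(-\sigma,w_\sigma)\|\le\delta_0\min\{1,\|w_\sigma\|\}\le\delta_0\) for almost every \(\sigma\). Here \(\mathcal R\) is read via the coherence property as a perturbation over the shifted basepoint, so that \(\Phi_\sigma P\) maps the value back into \(E_b\). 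The stable estimate \eqref{eq:ED-stable} then bounds the integrand by \(D e^{-\delta\sigma}\delta_0\). Likewise \eqref{eq:ED-unstable} bounds the second integrand by \(De^{-\delta\sigma}\delta_0\). Each integral therefore has norm at most \(D\delta_0/\delta\), which gives \(\|(F\eta)(v)\|\le 2D\delta_0/\delta\) uniformly in \(v\), once the integrals are known to exist as Bochner integrals.

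Existence of the Bochner integrals is where measurability enters. Rewrite the first integrand as \(\Phi_\sigma P(b\cdot(-\sigma))\,\mathcal R(-\sigma,w_\sigma)=P(b)\,\Phi_{\sigma}\mathcal R(-\sigma,w_\sigma)\) by invariance of \(P\). This is \(P(b)\) applied to the pullback to \(E_b\) of the curve \(s\mapsto\mathcal R(s,(\id+\eta)(\Phi_s v))\) at \(s=-\sigma\). The pullback of \(s\mapsto(\id+\eta)(\Phi_sv)\) is \(v+\Phi_{-s}\eta(\Phi_sv)\). This is strongly measurable by property (v) of \(\mathcal B\). The measurability clause (i) of \cref{def:CarPert} then gives strong measurability of the pulled-back perturbation, and applying the bounded operator \(P(b)\) preserves it. Together with the integrable bound \(De^{-\delta\sigma}\delta_0\), this makes both integrals converge absolutely in \(E_b\). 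The same reasoning with \(Q(b)\) handles the second term.

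The remaining properties are then checked in turn. Fiber preservation holds by construction, since all integrands live in \(E_b\). Vanishing on the zero section follows from \(\eta(0)=0\), linearity of \(\Phi\), and \(\mathcal R(\cdot,0)=0\). Fiberwise continuity uses dominated convergence: if \(v_n\to v\) in \(E_b\), then \(\Phi_{\mp\sigma}v_n\to\Phi_{\mp\sigma}v\) for each \(\sigma\). Continuity of \(\eta\) on fibers and the Lipschitz bound on \(\mathcal R\) then give pointwise convergence of the integrands, with the uniform integrable majorant \(De^{-\delta\sigma}\delta_0\).

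The main obstacle is property (v) for \(F\eta\), namely strong measurability of \(t\mapsto\Phi_{-t}(F\eta)(\Phi_tv)\). The plan is to substitute and use the flow property and coherence to write this as
\[
\int_0^\infty P(b)\,\Phi_{t+\sigma}\mathcal R(-t-\sigma,\ldots)\,d\sigma
=\int_{-\infty}^{t}P(b)\,g(s)\,ds ,
\]
and similarly for the \(Q\)-term. Here \(g(s)\) is the pullback to \(E_b\) of \(\mathcal R(s,(\id+\eta)(\Phi_s v))\) along the orbit of \(v\), suitably reindexed. In this form the expression is an indefinite integral in \(t\) of a locally integrable function. It is therefore continuous in \(t\), and in particular strongly measurable. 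The delicate point is checking that the coherence identity \(\mathcal R_{b\cdot s}(t,\cdot)=\mathcal R_b(t+s,\cdot)\), which only holds for almost all \(t\), is compatible with this change of variables. It is, because null sets do not affect the integrals.
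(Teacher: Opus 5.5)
Your proposal is correct and follows essentially the same route as the paper: the pointwise bound \(\|\mathcal R\|\le\delta_0\) combined with the dichotomy estimates gives the norm bound; measurability comes from the Carath\'eodory clause applied to the pullback \(v+\Phi_{-t}\eta(\Phi_t v)\); dominated convergence gives fiberwise continuity; and a change of variables with coherence turns \(\Phi_{-t}(F\eta)(\Phi_t v)\) into \(\int_{-\infty}^t\Phi_{-u}P\,r(u)\,du-\int_t^\infty\Phi_{-u}Q\,r(u)\,du\) with \(r(u):=\mathcal R_b(u,(\id+\eta)(\Phi_u v))\) (you also check \((F\eta)(0_b)=0_b\), which the paper leaves implicit). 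The one slip is cosmetic: after applying \(\Phi_{-t}\) and coherence, the stable integrand is \(P(b)\,\Phi_{\sigma-t}\,\mathcal R_b(t-\sigma,(\id+\eta)(\Phi_{t-\sigma}v))\), not \(P(b)\,\Phi_{t+\sigma}\mathcal R(-t-\sigma,\ldots)\), but your final form \(\int_{-\infty}^t P(b)\,g(s)\,ds\) is correct and the conclusion is unaffected.
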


\begin{proof}
Fix \(\eta\in\mathcal B\), write \[
  H_\eta:=\id+\eta,
\]
and fix \(v\in E_b\). Put \[
  r_{\eta,v}(t):=\mathcal{R}_b(t,H_\eta(\Phi_t v)).
\]
We first check measurability of this orbit source. Since \[
  \Phi_{-t}H_\eta(\Phi_t v)=v+\Phi_{-t}\eta(\Phi_t v),
\]
and since \(\eta\in\mathcal B\), the pullback \(t\mapsto\Phi_{-t}H_\eta(\Phi_t v)\) is strongly measurable, and so is, by the Carath\'eodory measurability assumption, \[
  t\mapsto\Phi_{-t}r_{\eta,v}(t).
\]
Hence, the integrals of \( F\) make sense, and we can estimate their norm by \[
    \|F\eta(v)\| \leq \frac{2D}{\delta} \|r_{\eta,v}\|  
\] by using the norm estimate for Bochner integrals and applying \cref{eq:ED-stable,eq:ED-unstable}. Moreover, since \(\mathcal{R}\) is bounded by \( \delta_0\), so is \( r_{\eta,v}(t)\) for a.e.\ \(t\). Hence, both integrals defining \((F\eta)(v)\) exist in \(E_b\), and \[
  \|(F\eta)(v)\| \leq \frac{2D\delta_0}{\delta}
,\] which proves the boundedness. 

We next prove fiberwise continuity. Fix \(b\in B\) and let \(v_n\to v\) in \(E_b\). For every fixed \(\sigma\geq0\), \[
  H_\eta(\Phi_{-\sigma}v_n)\to H_\eta(\Phi_{-\sigma}v),\qquad
  H_\eta(\Phi_\sigma v_n)\to H_\eta(\Phi_\sigma v),
\]
because \(\eta\) is fiberwise continuous. Since \(\mathcal{R}_b(t,\cdot)\) is Lipschitz for a.e.\ \(t\), the corresponding source terms converge for a.e.\ \(\sigma\). The stable and unstable differences are each dominated by \[
  D\delta_0 e^{-\delta\sigma}
,\]
hence dominated convergence gives \[
  (F\eta)(v_n)\to(F\eta)(v)
\]
in \(E_b\). Thus, \(F\eta\) is fiberwise continuous.

It remains to verify orbitwise strong measurability. By a linear change of variables and the coherence condition in \cref{def:CarPert}, we get \[
  \Phi_{-t}(F\eta)(\Phi_t v)=\int_{-\infty}^t \Phi_{-u}P\,r_{\eta,v}(u)\,du-\int_t^\infty \Phi_{-u}Q\,r_{\eta,v}(u)\,du.
\]
The two integrands are strongly measurable, locally integrable \(E_b\)-valued maps with exponentially decaying tails by \cref{eq:ED-stable,eq:ED-unstable} and the boundedness of \( r\). Thus, the right-hand side is strongly measurable as well, so \(F\eta\) is orbitwise strongly measurable, and so \[
  F\eta\in\mathcal B.
\]
\end{proof}

\begin{lemma}[Fixed point and conjugacy]\label{lem:fixedpoint-conjugacy}
Let \(\eta\in\mathcal B\) and set \(H:=\id+\eta\). Then \(\eta\) is a fixed point of \(F\) if and only if \[
  H(\Phi_t v)=\Psi_t(H(v))\qquad(t\in\R,\ v\in \mathcal{E}).
\]
\end{lemma}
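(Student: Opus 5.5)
The plan is to pull both conditions back to the fixed fiber \(E_b\) along the linear orbit and compare them there. Fix \(v\in E_b\) and \(\eta\in\mathcal B\), write \(H=\id+\eta\), and set
\[
  z(t):=\Phi_{-t}\eta(\Phi_t v),\qquad y(t):=\Phi_{-t}H(\Phi_t v)=v+z(t)\in E_b .
\]
Define \(r(u):=\mathcal{R}_b(u,H(\Phi_u v))\) and \(G(u,w):=\Phi_{-u}\mathcal{R}_b(u,\Phi_u w)\), as in the proof of \cref{prop:cutoff-flow}. Then \(\Phi_{-u}r(u)=G(u,y(u))\).

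\emph{Reformulating the conjugacy.} The conjugacy identity \(H(\Phi_tv)=\Psi_t(H(v))\) says precisely that \(t\mapsto\Phi_t y(t)\) is the \(\Psi\)-orbit of \(H(v)=y(0)\). By the pulled-back form \eqref{eq:pullback} of \eqref{eq:voc} and the uniqueness in \cref{prop:cutoff-flow}, this holds if and only if
\[
  y(t)=y(0)+\int_0^tG(u,y(u))\,du \qquad (t\in\R).
\]
This is equivalent to \(z(t)-z(0)=\int_0^t G(u,y(u))\,du\).

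For uniqueness we need \(y\) continuous. Any solution of the integral equation is automatically Lipschitz, because \(G\) is bounded on compacts by \cref{lem:KomGrowth}. So if the integral identity holds, \(y\) lies in \(C([-T,T],E_b)\) and the uniqueness of \cref{prop:cutoff-flow} applies. Conversely, if the conjugacy holds, \(y\) is the pulled-back \(\Psi\)-orbit, which is continuous by \cref{prop:cutoff-flow}.

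\emph{Pulling back \(F\eta\).} The proof of \cref{lem:F-welldef}, using coherence and a change of variables, gives
\[
  w(t):=\Phi_{-t}(F\eta)(\Phi_t v)=\int_{-\infty}^t \Phi_{-u}P\,r(u)\,du-\int_t^\infty \Phi_{-u}Q\,r(u)\,du .
\]
By invariance of \(P\) we have \(\Phi_{-u}P(b\cdot u)=P(b)\Phi_{-u}\), and similarly for \(Q\). Hence
\[
  w(t)-w(0)=\int_0^t (P+Q)\,G(u,y(u))\,du=\int_0^tG(u,y(u))\,du .
\]

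\emph{Forward direction.} If \(F\eta=\eta\), then \(w=z\). The display above is then exactly the integral identity from the first step, so the conjugacy follows.

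\emph{Converse direction.} If the conjugacy holds, then \(z\) and \(w\) have the same increments, so \(d:=z-w\) is a constant \(d(0)\in E_b\). Pushing forward, \(\eta(\Phi_tv)-(F\eta)(\Phi_tv)=\Phi_t d(0)\). The left side is bounded by \(\norm{\eta}_{\mathcal B}+\norm{F\eta}_{\mathcal B}\) uniformly in \(t\), using \cref{lem:F-welldef}. \Cref{lem:bdd-orbit-zero} then gives \(d(0)=0\). Since \(v\) was arbitrary, \(F\eta=\eta\).

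The main obstacle is justifying the pulled-back formula for \(w\): the change of variables \(u=t\pm\sigma\) combined with the coherence condition, which holds only for almost every time. One must check that the exceptional null sets do not affect the Bochner integrals. I would handle this by working with \(r\) defined along the single orbit of \(v\), so that only one fixed null set is involved.
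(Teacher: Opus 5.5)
Your proof is correct and follows essentially the paper's route. The paper also derives \((F\eta)(\Phi_t v)-\Phi_t(F\eta)(v)=\int_0^t\Phi_{t-u}\mathcal{R}_b(u,H(\Phi_u v))\,du\) via the change of variables and coherence; this is your formula for \(w(t)-w(0)\), pushed forward to \(E_{b\cdot t}\). It then gets the forward direction from uniqueness for the variation-of-constants equation and the converse by applying \cref{lem:bdd-orbit-zero} to the bounded invariant map \(\eta-F\eta\). The only differences are cosmetic. For uniqueness, the paper reruns a Gronwall estimate on \(Z(t)=\Phi_{-t}(H(\Phi_t v)-\Psi_t(H(v)))\) rather than citing the uniqueness class of \cref{prop:cutoff-flow}, and your remark that \(y\) is Lipschitz is exactly what that citation needs.
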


\begin{proof}
Write \(F\eta=(F\eta)^s+(F\eta)^u\), where \[
  (F\eta)^s(v):=\int_0^\infty\Phi_\sigma P\,\mathcal{R}_b(-\sigma,H(\Phi_{-\sigma}v))\,d\sigma,
\]
and \[
  (F\eta)^u(v):=-\int_0^\infty\Phi_{-\sigma}Q\,\mathcal{R}_b(\sigma,H(\Phi_\sigma v))\,d\sigma.
\]
Using the cocycle identity and coherence of \(\mathcal{R}\), the changes of variables \(u=t-\sigma\), \(u=-\sigma\), \(u=t+\sigma\), and \(u=\sigma\) yield
\begin{align*}
  (F\eta)^s(\Phi_t v)&=\int_{-\infty}^t\Phi_{t-u}P\,\mathcal{R}_b(u,H(\Phi_{u}v))\, du, \\
  \Phi_t(F\eta)^s(v)&=\int_{-\infty}^0\Phi_{t-u}P\,\mathcal{R}_b(u,H(\Phi_{u}v))\,du,
\end{align*}
and
\begin{align*}
  (F\eta)^u(\Phi_t v)&=-\int_t^\infty\Phi_{t-u}Q\,\mathcal{R}_b(u,H(\Phi_{u}v))\,du, \\ 
  \Phi_t(F\eta)^u(v)&=-\int_0^\infty\Phi_{t-u}Q\,\mathcal{R}_b(u,H(\Phi_{u}v))\,du.
\end{align*}
Subtracting yields, with oriented integrals if \(t<0\),
\begin{equation}\label{eq:Fix1}
  (F\eta)(\Phi_t v)-\Phi_t(F\eta)(v)=\int_0^t\Phi_{t-u}\mathcal{R}_b(u,H(\Phi_u v))\,du.
\end{equation}

If \(\eta=F\eta\), then \eqref{eq:Fix1} gives \[
  H(\Phi_t v)=\Phi_tH(v)+\int_0^t\Phi_{t-u}\mathcal{R}_b(u,H(\Phi_u v))\,du.
\]
Thus, \(t\mapsto H(\Phi_t v)\) and \(t\mapsto\Psi_t(H(v))\) solve the same variation-of-constants equation with the same initial value. Now let \[ 
Z(t):= \Phi_{-t}(H(\Phi_t v)- \Psi_t(H(v))). 
\]
Fix \( T>0\). Using \cref{lem:KomGrowth} we find a constant \( C=C(T)\) such that, for \( 0\leq t\leq T\), \[
    \|Z(t)\| \leq C \delta_0 \int_{0}^{t} \|Z(u)\| \, du  
\] by subtracting the variation of constants formulas and the Lipschitz bound \( \delta_0\) of \( \mathcal{R}\). Gronwall now yields \( Z = 0 \) on \( [0,T]\) and, with analogous argumentation, on \( [-T,0]\). Since \( T\) is arbitrary we have \( H(\Phi_tv) = \Psi_t(H(v))\) for all \( t \in \R\).

Conversely, assume the conjugacy identity. Applying the variation-of-constants formula to \(\Psi_t(H(v))\), and using \(H(\Phi_u v)=\Psi_u(H(v))\), gives \[
  \eta(\Phi_t v)-\Phi_t\eta(v)=\int_0^t\Phi_{t-u}\mathcal{R}_b(u,H(\Phi_u v))\,du.
\]
Comparing with \eqref{eq:Fix1}, and setting \[
  \xi:=\eta-F\eta,
\]
we obtain \[
  \xi(\Phi_t v)=\Phi_t\xi(v)\qquad(t\in\R).
\]
Since \(\eta\in\mathcal B\) and \(F\eta\in\mathcal B\) by \cref{lem:F-welldef}, the map \(\xi\) is bounded. Hence, \[
  \sup_{t\in\R}\|\Phi_t\xi(v)\|=\sup_{t\in\R}\|\xi(\Phi_t v)\|\le\|\xi\|_{\mathcal B}<\infty.
\]
By \cref{lem:bdd-orbit-zero}, \(\xi(v)=0\). Since \(v\) was arbitrary, \(\xi=0\), and therefore \[
  \eta=F\eta.
\]
\end{proof}

\begin{lemma}[Contraction]\label{lem:F-contraction}
Assume
\[
  \delta_0<\frac{\delta}{2D}.
\]
Then \(F:\mathcal B\to\mathcal B\) is a contraction. More precisely, for all \(\eta,\xi\in\mathcal B\),
\[
  \|F\eta-F\xi\|_{\mathcal B}\le \frac{2D\delta_0}{\delta}\|\eta-\xi\|_{\mathcal B}.
\]
\end{lemma}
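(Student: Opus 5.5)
We prove the stated Lipschitz estimate directly. The contraction property then follows from the hypothesis \(\delta_0<\delta/(2D)\), which makes the constant \(2D\delta_0/\delta\) strictly less than \(1\). By \cref{lem:F-welldef}, \(F\) already maps \(\mathcal B\) into itself, so only the difference estimate remains.

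Fix \(\eta,\xi\in\mathcal B\), write \(H_\eta=\id+\eta\) and \(H_\xi=\id+\xi\), and fix \(v\in E_b\). By linearity of the Bochner integral, \((F\eta)(v)-(F\xi)(v)\) splits into a stable and an unstable part:
\begin{align*}
  &\int_0^\infty \Phi_\sigma P\bigl[\mathcal{R}_b(-\sigma,H_\eta(\Phi_{-\sigma}v))-\mathcal{R}_b(-\sigma,H_\xi(\Phi_{-\sigma}v))\bigr]\,d\sigma\\
  &\quad-\int_0^\infty \Phi_{-\sigma}Q\bigl[\mathcal{R}_b(\sigma,H_\eta(\Phi_{\sigma}v))-\mathcal{R}_b(\sigma,H_\xi(\Phi_{\sigma}v))\bigr]\,d\sigma .
\end{align*}
The integrands are strongly measurable, since each of the four source terms is, as shown in the proof of \cref{lem:F-welldef}. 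Each pair of arguments of \(\mathcal R_b\) lies in the common fiber, and the arguments differ exactly by \(\eta(w)-\xi(w)\) with \(w=\Phi_{\mp\sigma}v\). The Lipschitz condition of \cref{def:CarPert} then bounds the bracket for almost every \(\sigma\) by
\[
\delta_0\min\{1,\|\eta(w)-\xi(w)\|\}\le\delta_0\|\eta-\xi\|_{\mathcal B}.
\]

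Next we apply \eqref{eq:ED-stable} in the form \(\|\Phi_\sigma P(b\cdot(-\sigma))u\|\le De^{-\delta\sigma}\|u\|\) for \(u\in E_{b\cdot(-\sigma)}\), and \eqref{eq:ED-unstable} similarly. Each integral is then bounded by
\[
\int_0^\infty D e^{-\delta\sigma}\delta_0\|\eta-\xi\|_{\mathcal B}\,d\sigma=\frac{D\delta_0}{\delta}\|\eta-\xi\|_{\mathcal B}.
\]
Adding the two parts and taking the supremum over \(v\in\mathcal E\) gives
\[
\|F\eta-F\xi\|_{\mathcal B}\le \frac{2D\delta_0}{\delta}\|\eta-\xi\|_{\mathcal B},
\]
and this constant is less than \(1\) under the hypothesis.

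The only delicate point is bookkeeping: the projections must be evaluated at the correct base points (\(P\) at \(b\cdot(-\sigma)\), \(Q\) at \(b\cdot\sigma\)) so that the dichotomy estimates apply to vectors in the right fibers. One must also use the almost-everywhere Lipschitz bound together with the Bochner norm inequality \(\|\int f\|\le\int\|f\|\). Neither step poses a real obstacle; the argument is a direct estimate.
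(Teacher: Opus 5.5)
Your proof is correct and follows essentially the same route as the paper. Both arguments bound the difference of the Green integrands pointwise via the Lipschitz condition of \cref{def:CarPert}, apply \eqref{eq:ED-stable}--\eqref{eq:ED-unstable}, integrate $e^{-\delta\sigma}$ over $[0,\infty)$, and take the supremum over $v$; your added remarks on measurability and on evaluating $P$ and $Q$ at the correct base points are details the paper leaves implicit.
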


\begin{proof}
By \cref{lem:F-welldef}, \(F\eta,F\xi\in\mathcal B\). Fix \(v\in E_b\) and write
\[
  H_\eta:=\id+\eta,\qquad H_\xi:=\id+\xi.
\]
For the stable branch, the Lipschitz condition (\cref{def:CarPert}) gives, for a.e.\ \(\sigma\geq 0\),
\[
  \|\mathcal{R}_b(-\sigma,H_\eta(\Phi_{-\sigma}v))-\mathcal{R}_b(-\sigma,H_\xi(\Phi_{-\sigma}v))\|\le \delta_0\|\eta-\xi\|_{\mathcal B}.
\]
Using \eqref{eq:ED-stable}, \[
  \left\|\Phi_\sigma P\big[\mathcal{R}_b(-\sigma,H_\eta(\Phi_{-\sigma}v))-\mathcal{R}_b(-\sigma,H_\xi(\Phi_{-\sigma}v))\big]\right\| \leq D\delta_0e^{-\delta\sigma}\|\eta-\xi\|_{\mathcal B}
.\]
Similarly, using \eqref{eq:ED-unstable}, \[
  \left\|\Phi_{-\sigma}Q\big[\mathcal{R}_b(\sigma,H_\eta(\Phi_\sigma v))-\mathcal{R}_b(\sigma,H_\xi(\Phi_\sigma v))\big]\right\| \leq D\delta_0e^{-\delta\sigma}\|\eta-\xi\|_{\mathcal B}
.\]
Therefore, \[
  \|(F\eta)(v)-(F\xi)(v)\|\le 2D\delta_0\|\eta-\xi\|_{\mathcal B}\int_0^\infty e^{-\delta\sigma}\,d\sigma =\frac{2D\delta_0}{\delta}\|\eta-\xi\|_{\mathcal B}.
\]
Taking the supremum over \(v\in \mathcal{E}\) gives
\[
  \|F\eta-F\xi\|_{\mathcal B}\le \frac{2D\delta_0}{\delta}\|\eta-\xi\|_{\mathcal B}.
\]
Since \(2D\delta_0/\delta<1\), \(F\) is a contraction.
\end{proof}

\begin{theorem}[Hartman--Grobman theorem]\label{thm:HG}
Given our standing assumptions \cref{rem:StanAss} and additionally that \[
  \delta_0<\frac{\delta}{2D},
\]
there exists a unique \(\eta^*\in\mathcal B\) such that \(H:=\id+\eta^*\) satisfies \[
  H(\Phi_t v)=\Psi_t(H(v))\qquad(t\in\R,\ v\in \mathcal{E}).
\]
Moreover, \(H\) restricts to a homeomorphism on every fiber. Its inverse is of the form \[
  H^{-1}=\id+\zeta
,\]
where\[
  \zeta(v):=-\int_0^\infty\Phi_\sigma P\,\mathcal{R}_b(-\sigma,\Psi_{-\sigma}(v))\,d\sigma+\int_0^\infty\Phi_{-\sigma}Q\,\mathcal{R}_b(\sigma,\Psi_\sigma(v))\,d\sigma
\]
for \( v \in \mathcal{E}\).
The map \(\zeta\) is bounded, fiber-preserving, vanishes on the zero section, and is fiberwise continuous.

If, in addition, \[
  F(\mathcal C)\subseteq\mathcal C,\qquad \zeta\in\mathcal C,
\]
then \(\eta^*,\zeta\in\mathcal C\). Consequently, \(H\) and \(H^{-1}\) are continuous bundle maps and \( \Psi\) is a continuous SPF.
\end{theorem}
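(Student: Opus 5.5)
Existence and uniqueness of \(\eta^*\) come straight from Banach's fixed point theorem. By \cref{lem:F-welldef} the operator \(F\) maps the Banach space \(\mathcal B\) into itself, and by \cref{lem:F-contraction} it is a contraction once \(\delta_0<\delta/(2D)\). So \(F\) has exactly one fixed point \(\eta^*\in\mathcal B\). By \cref{lem:fixedpoint-conjugacy}, the fixed points of \(F\) are exactly the \(\eta\in\mathcal B\) for which \(H=\id+\eta\) conjugates \(\Phi\) to \(\Psi\). This gives both the conjugacy identity and uniqueness within \(\mathcal B\).

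For the inverse I will use a symmetric fixed-point characterization with the roles of \(\Phi\) and \(\Psi\) exchanged.

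First, I check that \(\zeta\) is well defined and behaves well. Pulled back along the orbit, the integrands are \(\Phi_{-u}\mathcal{R}_b(u,\Psi_u(v))\), and \(u\mapsto\Phi_{-u}\Psi_u(v)\) is continuous by \cref{prop:cutoff-flow}. The Carath\'eodory measurability condition then gives strong measurability of the integrands. The ED bounds together with \(\|\mathcal R\|\le\delta_0\) give \(\|\zeta\|\le 2D\delta_0/\delta\), and \(\zeta(0_b)=0\) because \(\Psi_t(0)=0\). Fiberwise continuity follows by dominated convergence, using continuity of \(\Psi_t|_{E_b}\) from \cref{prop:cutoff-flow}, exactly as in \cref{lem:F-welldef}.

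Second, I compute \(\zeta(\Psi_t v)-\Phi_t\zeta(v)\). The same changes of variables as in \cref{lem:fixedpoint-conjugacy}, combined with coherence and \(\Psi_{u}\Psi_t=\Psi_{u+t}\), give
\[
\zeta(\Psi_t v)-\Phi_t\zeta(v)=-\int_0^t\Phi_{t-u}\mathcal{R}_b(u,\Psi_u(v))\,du = \Phi_t v-\Psi_t v .
\]
Hence \(G:=\id+\zeta\) satisfies \(G\circ\Psi_t=\Phi_t\circ G\).

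Third, I show \(H\circ G=\id\) and \(G\circ H=\id\).

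For \(G\circ H\): the map \(G\circ H-\id=\eta^*+\zeta\circ H\) is bounded and fiber-preserving, and it intertwines \(\Phi_t\) with itself, since \(G H\Phi_t=G\Psi_tH=\Phi_tGH\). So for each \(v\) the orbit \(t\mapsto\Phi_t(GH(v)-v)=GH(\Phi_tv)-\Phi_tv\) is bounded, and \cref{lem:bdd-orbit-zero} forces \(GH=\id\).

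For \(H\circ G\): the map \(w:=HG-\id\) is bounded and commutes with \(\Psi\). Setting \(a=HG(v)\) and \(c=v\), both \(t\mapsto\Psi_t a\) and \(t\mapsto\Psi_t c\) are solutions of the variation-of-constants equation. Their difference \(d(t)=\Psi_ta-\Psi_tc\) is bounded and satisfies
\[
d(t)=\Phi_t d(0)+\int_0^t\Phi_{t-s}\bigl[\mathcal R(s,\Psi_sa)-\mathcal R(s,\Psi_sc)\bigr]ds .
\]
For a bounded solution, this can be rewritten in Lyapunov--Perron form: \(d(0)\) equals the projected integrals from \(\pm\infty\), giving \(\|d\|_\infty\le (2D\delta_0/\delta)\|d\|_\infty\). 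Since \(2D\delta_0/\delta<1\), we get \(d\equiv0\).

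I expect this last step to be the main obstacle: justifying the Lyapunov--Perron representation for bounded solutions of the perturbed equation. The plan is to apply \(P\) and \(Q\), let the base time go to \(\mp\infty\), and use the ED estimates so that the boundary terms vanish. Together, \(GH=\id\) and \(HG=\id\) show that \(H|_{E_b}\) is bijective with inverse \(G|_{E_b}\), and both are continuous, so \(H\) is a fiberwise homeomorphism with \(H^{-1}=\id+\zeta\).

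Finally, assume \(F(\mathcal C)\subseteq\mathcal C\) and \(\zeta\in\mathcal C\). The subspace \(\mathcal C\) is closed by \cref{lem:C-closed}, so it is a complete metric space invariant under the contraction \(F\). The unique fixed point therefore lies in \(\mathcal C\): concretely, Picard iterates started at \(0\in\mathcal C\) converge to \(\eta^*\), so \(\eta^*\in\mathcal C\). Then \(H\) and \(H^{-1}\) are continuous bundle maps. Consequently \(\Psi_t=H\circ\Phi_t\circ H^{-1}\) is jointly continuous in \((t,v)\), so \(\Psi\) is a continuous SPF.
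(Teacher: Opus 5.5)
Your proposal is correct and follows the paper's proof essentially step for step. Both use Banach's fixed point theorem with \cref{lem:fixedpoint-conjugacy} for existence and uniqueness of $\eta^*$, the same construction and intertwining identity for $\zeta$, \cref{lem:bdd-orbit-zero} for $(\id+\zeta)\circ H=\id$, a bound of the form $A\le \frac{2D\delta_0}{\delta}A$ for $H\circ(\id+\zeta)=\id$, and closedness of $\mathcal C$ under the Picard iteration for the continuity layer. The Lyapunov--Perron step you flag is what the paper compresses into ``applying the same estimates as above.'' Your plan fills it in correctly: project with $P$ and $Q$, and let the boundary terms $\Phi_{-t}Q\,d(t)$ and $\Phi_{t}P\,d(-t)$ vanish as $t\to\infty$ by the dichotomy estimates.
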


\begin{proof}
Set \[
  q:=\frac{2D\delta_0}{\delta}.
\]
By assumption \(q<1\). By \cref{lem:F-contraction}, \(F:\mathcal B\to\mathcal B\) is a contraction with contraction constant \(q\). Since \(\mathcal B\) is complete, Banach's fixed-point theorem gives a unique \(\eta^*\in\mathcal B\) such that \[
  F\eta^*=\eta^*.
\]
By \cref{lem:fixedpoint-conjugacy}, \(H:=\id+\eta^*\) satisfies \[
  H(\Phi_t v)=\Psi_t(H(v))\qquad(t\in\R,\ v\in \mathcal{E}).
\]
We first prove fiberwise injectivity. Suppose \(v,w\in E_b\) and \[
  H(v)=H(w).
\]
Using the conjugacy identity gives \[
  H(\Phi_t v)=\Psi_t(H(v))=\Psi_t(H(w))=H(\Phi_t w)\qquad(t\in\R).
\]
Hence, \[
  \Phi_t(v-w)=\eta^*(\Phi_t w)-\eta^*(\Phi_t v).
\]
The right-hand side is uniformly bounded in \(t\), since \(\eta^*\in\mathcal B\). Thus, \[
  \sup_{t\in\R}\|\Phi_t(v-w)\|<\infty.
\]
By \cref{lem:bdd-orbit-zero}, \(v-w=0\). Hence, \(H|_{E_b}\) is injective.

Next, we construct the inverse correction. For \(v\in E_b\), define \[
  \zeta(v):=-\int_0^\infty\Phi_\sigma P\,\mathcal{R}_b(-\sigma,\Psi_{-\sigma}(v))\,d\sigma+\int_0^\infty\Phi_{-\sigma}Q\,\mathcal{R}_b(\sigma,\Psi_\sigma(v))\,d\sigma.
\]
By Proposition~\ref{prop:cutoff-flow}, the pulled-back orbit
\(t\mapsto\Phi_{-t}\Psi_t(v)\) is continuous as an \(E_b\)-valued curve.
Hence, Definition~\ref{def:CarPert} applies to \(t\mapsto\Psi_t(v)\), and
the pulled-back source terms are strongly measurable. Thus, the Green
integrands defining \(\zeta(v)\) are strongly measurable. Now \cref{eq:ED-stable,eq:ED-unstable}, and \[
  \|\mathcal{R}_b(t,y)\|\leq\delta_0
\]
give \[
  \|\zeta(v)\|\leq 2D\delta_0\int_0^\infty e^{-\delta\sigma}\,d\sigma=\frac{2D\delta_0}{\delta}.
\]
Thus, \(\zeta\) is bounded and fiber-preserving. Also, \(\zeta(0_b)=0_b\), since \(\mathcal{R}_b(t,0)=0\) and the zero section is a solution of the variation-of-constants equation.

We now prove that \(\zeta\) is fiberwise continuous. Fix \(b\in B\) and let \(v_n\to v\) in \(E_b\). For every fixed \(\sigma\geq0\), the fiber maps \(\Psi_{\pm\sigma}|_{E_b}\) are continuous by \cref{prop:cutoff-flow}. Hence, \[
  \Psi_{-\sigma}(v_n)\to\Psi_{-\sigma}(v),\qquad \Psi_\sigma(v_n)\to\Psi_\sigma(v)
\]
in the corresponding fibers. Since \(\mathcal{R}_b(t,\cdot)\) is Lipschitz for almost every \(t\), we have, for almost every \(\sigma\geq0\), \[
  \mathcal{R}_b(-\sigma,\Psi_{-\sigma}(v_n))\to\mathcal{R}_b(-\sigma,\Psi_{-\sigma}(v)),
\]
and similarly \[
  \mathcal{R}_b(\sigma,\Psi_\sigma(v_n))\to\mathcal{R}_b(\sigma,\Psi_\sigma(v)).
\]
For the stable branch, the difference of the integrands is bounded by \[
  De^{-\delta\sigma}\delta_0\min\{1,\|\Psi_{-\sigma}(v_n)-\Psi_{-\sigma}(v)\|\}\leq D\delta_0e^{-\delta\sigma},
\]
and the right-hand side is integrable on \([0,\infty)\). Dominated convergence gives convergence of the stable integral. The unstable branch is analogous. Thus, \[
  \zeta(v_n)\to\zeta(v)
\]
in \(E_b\). Therefore, \(\zeta|_{E_b}\) is continuous for every \(b\in B\).

Put \[
  k:=\id+\zeta.
\]
We claim that \(k\) conjugates \(\Psi\) back to \(\Phi\). Repeating the same change-of-variables calculation as in the proof of \cref{lem:fixedpoint-conjugacy}, now along the nonlinear orbit, gives \[
  \zeta(\Psi_t v)-\Phi_t\zeta(v)=-\int_0^t\Phi_{t-s}\mathcal{R}_b(s,\Psi_s(v))\,ds.
\]
Adding the variation-of-constants formula yields \[
  k(\Psi_t v)=\Phi_t k(v)\qquad(t\in\R,\ v\in \mathcal{E})
.\]
We prove first that \(k\circ H=\id\). Define \[
  \beta:=k\circ H-\id=\eta^*+\zeta\circ H.
\]
Using \(H\circ\Phi_t=\Psi_t\circ H\) and \(k\circ\Psi_t=\Phi_t\circ k\), we get \[
  k\circ H\circ\Phi_t=k\circ\Psi_t\circ H=\Phi_t\circ k\circ H.
\]
Therefore, \[
  \beta(\Phi_t v)=\Phi_t\beta(v)\qquad(t\in\R,\ v\in \mathcal{E}).
\]
Since \(\eta^*\) and \(\zeta\) are bounded, \(\beta\) is bounded. Hence, \[
  \sup_{t\in\R}\|\Phi_t\beta(v)\|=\sup_{t\in\R}\|\beta(\Phi_t v)\|<\infty.
\]
By \cref{lem:bdd-orbit-zero}, \(\beta(v)=0\). Since \(v\) was arbitrary, \[
  k\circ H=\id
.\]
It remains to prove \(H\circ k=\id\). Define \[
  \alpha:=H\circ k-\id=\zeta+\eta^*\circ k,\qquad A:=\sup_{v\in \mathcal{E}}\|\alpha(v)\|
.\]
The number \(A\) is finite because \(\eta^*\) and \(\zeta\) are bounded. Composing the two conjugacy identities gives \[
  H\circ k\circ\Psi_t=H\circ\Phi_t\circ k=\Psi_t\circ H\circ k
,\]
which means that \[
  (\id+\alpha)\circ\Psi_t=\Psi_t\circ(\id+\alpha)
.\]
Subtracting the variation-of-constants formula for \(\Psi_t(v)\) from the one for \(\Psi_t(v+\alpha(v))\), and applying the same estimates as above, gives \[
  \|\alpha(v)\|\leq 2D\delta_0\int_0^\infty e^{-\delta\sigma}\min\{1,A\}\,d\sigma=\frac{2D\delta_0}{\delta}\min\{1,A\}
.\]
Taking the supremum over \(v\in \mathcal{E}\), we obtain \[
  A\leq q\min\{1,A\}.
\]
Since \(q<1\), this forces \(A=0\). Hence, \[
  H\circ k=\id.
\]
Together with \(k\circ H=\id\), this proves that \(k=H^{-1}\).

Since \(\eta^*\in\mathcal B\), the map \(H|_{E_b}\) is continuous on every fiber. Since \(\zeta\) is fiberwise continuous, \(k|_{E_b}\) is continuous on every fiber. Therefore, \(H|_{E_b}\) is a homeomorphism with inverse \(k|_{E_b}\) for every \(b\in B\).
Finally assume \[
  F(\mathcal C)\subseteq\mathcal C,\qquad \zeta\in\mathcal C.
\]
Since \(0\in\mathcal C\), all Banach iterates \[
  \eta_{n+1}:=F\eta_n,\qquad \eta_0:=0,
\]
belong to \(\mathcal C\). They converge in \(\mathcal B\) to the unique fixed point \(\eta^*\). Since \(\mathcal C\) is closed in \(\mathcal B\), we get \[
  \eta^*\in\mathcal C.
\]
By the additional assumption \(\zeta\in\mathcal C\), both \[
  H=\id+\eta^*,\qquad H^{-1}=\id+\zeta
\]
are continuous bundle maps.
\end{proof}

\begin{remark}[On the additional continuity assumption]\label{rem:continuity-layer}
The core Carath\'eodory hypotheses imply that the inverse correction \(\zeta\) is fiberwise continuous, but they do not imply that \(\zeta\) is jointly continuous as a bundle map. Thus, the unconditional conclusion of \cref{thm:HG} is fiberwise homeomorphy, not joint continuity of \(H^{-1}\).
The additional continuity condition \[
  F(\mathcal C)\subseteq\mathcal C,\qquad \zeta\in\mathcal C
\]
is therefore a genuine assumption.

\end{remark}
The next proposition spells out the immediacy of the condition in the case where we are perturbing with a continuous bundle map.

\begin{proposition}\label{prop:ContForContBunMap}
  In the case where \( \mathcal{R}_b(t,\cdot ) = f|_{E_{b\cdot t}}\) for a jointly continuous bundle map \( f\) the conditions \[
      F(\mathcal{C}) \subset \mathcal{C}, \quad \zeta \in \mathcal{C}
  \] are immediate.
\end{proposition}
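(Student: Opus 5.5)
The plan is to verify the two memberships separately, each by the local-section continuity criterion summarized after \cref{lem:uniform-limit}, combined with a tail-truncation argument. For \(F(\mathcal C)\subseteq\mathcal C\), fix \(\eta\in\mathcal C\). Then \(v\mapsto f\bigl((\id+\eta)(\Phi_{\mp\sigma}v)\bigr)\) is jointly continuous on \(\EE\) for each \(\sigma\), being a composition of the continuous maps \(\Phi_{\mp\sigma}\), \(\id+\eta\) (continuous by \ref{ax:add}) and \(f\). Applying \(\Phi_{\pm\sigma}P\) or \(\Phi_{\pm\sigma}Q\) preserves continuity: \(P\) is continuous along sections and \(\Phi\) is jointly continuous. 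I would first truncate the integrals to \([0,T]\). The tails are bounded uniformly in \(v\) by \(2D\delta_0 e^{-\delta T}/\delta\), so by \cref{lem:C-closed} (closedness under uniform limits) it suffices to show that each truncated map
\[
F_T\eta(v):=\int_0^T\Phi_\sigma P f\bigl((\id+\eta)(\Phi_{-\sigma}v)\bigr)\,d\sigma-\int_0^T\Phi_{-\sigma}Q f\bigl((\id+\eta)(\Phi_\sigma v)\bigr)\,d\sigma
\]
is continuous.

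For continuity of \(F_T\eta\) at \(v_0\in E_{b_0}\), I would take \(v_n\to v_0\) and pull everything back to a fixed fiber. Applying \(\Phi_{-\sigma}\) on the stable side, the integrand of \(F_T\eta(v)\) for \(v \in E_b\) equals \(P(b)\Phi_{-\sigma}\) applied to an element of \(E_{b\cdot\sigma}\), so it is more convenient to compare through a continuous local section. Concretely, I would choose a local section \(\tau\) through \(F_T\eta(v_0)\) and need \(\|F_T\eta(v_n)-\tau(b_n)\|\to0\). The cleanest route is to approximate the integral by Riemann sums: the integrand \(g(\sigma,v)\) is jointly continuous on \([0,T]\times\EE\) and lands in \(E_{\pi(v)}\). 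On the compact set \([0,T]\times\{v_n\}\cup\{v_0\}\), continuity in \(\sigma\) is uniform in the fiber norm. Indeed, by compactness together with \cref{lem:conv-criterion}, \(\|g(\sigma,v_n)-g(\sigma',v_n)\|\) is small uniformly in \(n\) when \(|\sigma-\sigma'|\) is small; I would argue this by contradiction via convergent subsequences, exactly as in \cref{lem:KomGrowth}. A Riemann sum is a finite sum of continuous bundle maps, hence continuous by \ref{ax:add}, and it approximates \(F_T\eta\) uniformly on that compact set. The \(\limsup\) argument of \cref{lem:C-closed} then finishes.

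For \(\zeta\in\mathcal C\), I would use the same scheme with \(\Psi_{\pm\sigma}\) in place of \((\id+\eta)\circ\Phi_{\pm\sigma}\). The extra ingredient is joint continuity of \(\Psi\) on \([-T,T]\times\EE\). For a continuous bundle map \(f\), I would obtain this from the Picard iteration in \cref{prop:cutoff-flow}: the iterates \(\Psi^{(k+1)}_t v=\Phi_t v+\int_0^t\Phi_{t-s}f(\Psi^{(k)}_s v)\,ds\) are jointly continuous by the Riemann-sum argument above. They converge uniformly on \([-T,T]\times\pi^{-1}(K)\) for \(K\) a compact neighbourhood, since the Lipschitz constant \(M_T^2\delta_0\) can be made uniform on \(K\) by \cref{lem:KomGrowth}. \cref{lem:uniform-limit}, or its analogue for maps on \(\EE\), then gives continuity of \(\Psi\). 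The main obstacle I expect is this uniformity: turning fiberwise estimates into convergence in \(\EE\) across varying fibers. The tool that should handle it is the compactness-by-contradiction trick of \cref{lem:KomGrowth} combined with \cref{lem:conv-criterion}.
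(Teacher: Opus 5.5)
Your proposal is correct. It uses the same toolkit as the paper (truncation with a uniform exponential tail, Riemann sums as finite sums of continuous maps, uniform limits checked through local sections), but it is organized differently.

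\textbf{How the paper argues.} It fixes a local section \(\tau\) through \(v_0\) and proceeds in three steps.
\begin{enumerate}[label=\textup{(\roman*)}]
  \item It shows that \(b\mapsto(F\eta)(\tau(b))\) is a continuous section, using Riemann sums that converge uniformly over a compact \(K\subset W\), \cref{lem:uniform-limit}, and the tail bound.
  \item It proves \(\|(F\eta)(v_n)-(F\eta)(\tau(b_n))\|\to0\) by pointwise convergence of same-fiber integrand differences and dominated convergence on all of \([0,\infty)\).
  \item For \(\zeta\) it only says the argument is analogous.
\end{enumerate}

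\textbf{How your argument differs.} You truncate globally and absorb the tail into the \(\limsup\) argument of \cref{lem:C-closed}. You then prove continuity of \(F_T\eta\) directly along a sequence, with Riemann sums that are uniform on \([0,T]\times(\{v_n\}\cup\{v_0\})\). This removes the section/off-section split and the dominated-convergence step. Your compactness-by-contradiction justification of uniform continuity in \(\sigma\), measured in the fiber norm, is also what is needed to make the paper's ``by compactness, uniformly continuous'' rigorous, since \(\EE\) carries no uniform structure.

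\textbf{The \(\zeta\) part.} This is where your version adds real content. The integrand of \(\zeta\) contains \(\Psi_{\pm\sigma}\), so ``the same argumentation'' requires joint continuity of \(\Psi\). \Cref{prop:cutoff-flow} gives this only fiberwise, and the paper otherwise obtains it only a posteriori from \(H,H^{-1}\in\mathcal C\), which would be circular here. Your Picard argument closes this gap:
\begin{itemize}
  \item the iterates are continuous by the Riemann-sum step, after rescaling \(s=tu\) to a fixed interval;
  \item they converge uniformly on \([-T,T]\times\pi^{-1}(K)\), because \(\sup_{b\in K}M_T(b)<\infty\) by the first assertion of \cref{lem:KomGrowth} and \(\|f\|\le\delta_0\) bounds the first Picard difference uniformly;
  \item the limit is \(\Psi\) by the uniqueness in \cref{prop:cutoff-flow}.
\end{itemize}

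\textbf{What the paper's template buys.} Its pointwise-plus-dominated-convergence structure is what carries over to \cref{prop:control-continuity-layer}. There the integrand contains \(u_{n,j}(-\sigma)\) and is not continuous in \(\sigma\), so your Riemann-sum route depends on the source being an autonomous bundle map \(f\).

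\textbf{One small addition.} Passing from ``\(P\) continuous along sections'' to joint continuity of \(v\mapsto P(\pi(v))v\) uses \(\sup_b\|P(b)\|\le D\) from \cref{rem:PQ-bounds} together with \cref{lem:conv-criterion}; you should state this step explicitly.
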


\begin{proof}
Let \(\eta\in\mathcal C\). Put \[
  H_\eta:=\id+\eta .
\]
Since \(\eta\in\mathcal C\), the map \(H_\eta\) is a jointly continuous bundle map over \(\id_B\). Hence, \[
  f\circ H_\eta:\mathcal{E} \to \mathcal{E}
\]
is again jointly continuous.

Fix \(v_0 \in E_{b_0}\). Choose, by \cref{prop:local-sections}, a continuous local section \[
  \tau:W\to \mathcal{E}, \qquad \tau(b_0)=v_0 .
\]
We first show that \[
  b\longmapsto (F\eta)(\tau(b))
\]
is a continuous local section on \(W\). We discuss the stable part; the unstable part is analogous. For \(\sigma\geq0\), set \[
  I^s(b,\sigma) := \Phi_\sigma P\, f\bigl(H_\eta(\Phi_{-\sigma}\tau(b))\bigr) \in E_b .
\]
For each finite \(T>0\), the map \[
  (b,\sigma)\longmapsto I^s(b,\sigma)
\]
is, by the continuity of \(\tau\), \(\Phi\), \(H_\eta\), and \(f\), together with continuity of the projection family \(P\) along sections, continuous on \(W\times[0,T]\). 

Define the truncated stable section \[
  \sigma_T^s(b):=\int_0^T I^s(b,\sigma)\,d\sigma .
\]
Fix \(b_1\in W\), and choose a compact neighborhood \(K\subset W\) of \(b_1\). On \(K\times[0,T]\), the integrand is, by compactness, uniformly continuous. Hence, Riemann sums for the above integral converge uniformly over \( K\). Each Riemann sum is a finite linear combination of continuous local sections, and is therefore continuous. By \cref{lem:uniform-limit} \(\sigma_T^s\) is continuous near \(b_1\). Since \(b_1\) was arbitrary, \(\sigma_T^s\) is continuous on \(W\).

The infinite tail is uniformly controlled by the exponential dichotomy: \[
  \sup_{b\in W} \left\| \int_T^\infty I^s(b,\sigma)\,d\sigma \right\| \leq D\delta_0\int_T^\infty e^{-\delta\sigma}\,d\sigma \to 0
\] as \( T \to \infty \).
Therefore, the full stable section \[
  b \mapsto \int_0^\infty \Phi_\sigma P\,f\bigl(H_\eta(\Phi_{-\sigma}\tau(b))\bigr)\,d\sigma
\]
is continuous. The unstable branch is treated in the same way. Hence, \[
  b\longmapsto (F\eta)(\tau(b))
\]
is a continuous local section through \((F\eta)(v_0)\).

We now prove continuity of \(F\eta\) at \(v_0\). Let \( v_n \to v_0\) in \( \mathcal{E}\) and set \(b_n:=\pi(v_n)\). Then \(b_n\to b_0\), and for all large \(n\), \(b_n\in W\). By \cref{lem:conv-criterion}, it is enough to show \[
  \|(F\eta)(v_n)-(F\eta)(\tau(b_n))\|\to 0.
\]
Again consider the stable branch. We have
\begin{align*}
  &\left\| \int_0^\infty \Phi_\sigma P \left[ f\bigl(H_\eta(\Phi_{-\sigma}v_n)\bigr) - f\bigl(H_\eta(\Phi_{-\sigma}\tau(b_n))\bigr) \right] \, d\sigma \right\| \\ 
  &\leq \int_0^\infty D e^{-\delta\sigma} \left\| f\bigl(H_\eta(\Phi_{-\sigma}v_n)\bigr) - f\bigl(H_\eta(\Phi_{-\sigma}\tau(b_n))\bigr) \right\|\,d\sigma .
\end{align*}

For each fixed \(\sigma\geq0\), continuity of \(\Phi\), \(\tau\), \(H_\eta\), and \(f\) gives \[
  f\bigl(H_\eta(\Phi_{-\sigma}v_n)\bigr) - f\bigl(H_\eta(\Phi_{-\sigma}\tau(b_n))\bigr) \longrightarrow 0
\]
in the corresponding fiber. Moreover, the integrand is bounded by \[
  D\delta_0 e^{-\delta\sigma},
\]
which is integrable on \([0,\infty)\). Dominated convergence gives convergence of the stable branch to zero. The unstable branch is the same, with \(\Phi_\sigma\) replaced by \(\Phi_{-\sigma}\) and vice versa. Thus, \[
  \|(F\eta)(v_n)-(F\eta)(\tau(b_n))\|\to0.
\]
Since \(b\mapsto(F\eta)(\tau(b))\) is a continuous local section through \((F\eta)(v_0)\), the convergence criterion yields \[
  (F\eta)(v_n)\to(F\eta)(v_0).
\]
Hence, \(F\eta\) is continuous at \(v_0\). Since \(v_0\) was arbitrary, \(F\eta\in\mathcal C\). Therefore, \[
  F(\mathcal C)\subseteq\mathcal C .
\]

Proving \( \zeta \in \mathcal{C}\) follows essentially the same argumentation using \cref{lem:conv-criterion}, the existence of local sections and the construction of truncated sections.
\end{proof}

\subsection{H\"older Hartman--Grobman theorem}\label{subsec:Holder-HG}

For \(\alpha\in(0,1]\) and \(\xi\in\mathcal B\) we write
\[
  [\xi]_\alpha:=\sup_b\sup_{\substack{v,w\in E_b\\v\neq w}}\frac{\|\xi(v)-\xi(w)\|}{\|v-w\|^\alpha}.
\]
Thus, \(\xi\) is fiberwise \(\alpha\)-H\"older if
\[
  [\xi]_\alpha<\infty .
\]

\begin{theorem}[H\"older Hartman--Grobman theorem]\label{thm:Holder-HG}
Assume the hypotheses of \cref{thm:HG}. Assume moreover that \(\Phi\) has a strong exponential dichotomy with data
\[
  (P,D,\delta_s,\bar\delta_s,\delta_u,\bar\delta_u).
\]
Set
\[
  \alpha_0:=\min\left\{\frac{\delta_s}{\bar\delta_s},\frac{\delta_u}{\bar\delta_u}\right\}.
\]
Let \(\alpha\in(0,\alpha_0)\), and assume 
\begin{equation}\label{eq:HoeldHGThm1}
  \delta_0<\frac{\min\{\delta_s-\alpha\bar\delta_s,\delta_u-\alpha\bar\delta_u\}}{4D^{1+3\alpha}}.
\end{equation}
Then the fixed-point correction \(\eta^*\) is fiberwise \(\alpha\)-H\"older. More precisely,
\[
  [\eta^*]_\alpha\le \frac{K_\alpha}{1-K_\alpha},
\]
where
\[
  K_\alpha:=\delta_0D^{1+2\alpha}\left[\frac{D^\alpha}{\delta_s-\alpha\bar\delta_s}+\frac{D^\alpha}{\delta_u-\alpha\bar\delta_u}+\frac{1}{\delta_s+\alpha\delta_u}+\frac{1}{\delta_u+\alpha\delta_s}\right].
\]
In particular,
\[
  \|\eta^*(v)-\eta^*(w)\|\le \frac{K_\alpha}{1-K_\alpha}\|v-w\|^\alpha
\]
for all \(v,w\) in the same fiber.

Moreover, if \(H^{-1}=\id+\zeta\), then \(\zeta\) is fiberwise \(\beta\)-H\"older for every
\[
  \beta<\alpha_0^-(\delta_0):=\min\left\{\frac{\delta_s}{\bar\delta_s+2D^2\delta_0},\frac{\delta_u}{\bar\delta_u+2D^2\delta_0}\right\}.
\]
Consequently, \(H\) and \(H^{-1}\) are fiberwise H\"older on bounded subsets of the fibers.
\end{theorem}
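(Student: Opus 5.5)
The plan is to show that the closed subset
\[
  \mathcal B_{\alpha,M}:=\{\xi\in\mathcal B:\ [\xi]_\alpha\le M\},\qquad M:=\frac{K_\alpha}{1-K_\alpha},
\]
is invariant under $F$. Closedness in $\mathcal B$ is clear: a uniform limit of maps with $[\cdot]_\alpha\le M$ still has $[\cdot]_\alpha\le M$, by passing to the limit pointwise in the difference quotient. Condition \eqref{eq:HoeldHGThm1} should give $K_\alpha<1$, since each of the four summands in the bracket is dominated by $D^{\alpha}/\min\{\delta_s-\alpha\bar\delta_s,\delta_u-\alpha\bar\delta_u\}$, using $\delta_s+\alpha\delta_u\ge\delta_s-\alpha\bar\delta_s$ and $D\ge1$. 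Once invariance is shown, the Banach iterates $\eta_{n+1}=F\eta_n$ with $\eta_0=0\in\mathcal B_{\alpha,M}$ stay in this closed set. By \cref{thm:HG} they converge to $\eta^*$, so $[\eta^*]_\alpha\le M$.

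For invariance, fix $\eta$ with $[\eta]_\alpha\le M$ and $v,w\in E_b$. Write $H=\id+\eta$. The Lipschitz and boundedness property of $\mathcal R$ gives $\|\mathcal R(H x)-\mathcal R(Hy)\|\le\delta_0\min\{1,\|x-y\|+M\|x-y\|^\alpha\}\le\delta_0(\|x-y\|^\alpha+M\|x-y\|^\alpha)$. Here we use $\min\{1,r\}\le r^\alpha$. Apply this with $x-y=\Phi_{\mp\sigma}(v-w)$ and insert \cref{lem:growth-SED} with $z=v-w$. In the stable branch we get $\|\Phi_{-\sigma}z\|^\alpha\le D^{2\alpha}e^{\alpha\bar\delta_s\sigma}\|Pz\|^\alpha+D^\alpha e^{-\alpha\delta_u\sigma}\|Qz\|^\alpha$. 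This is multiplied by $De^{-\delta_s\sigma}$ from \eqref{eq:strongED-upper-s}. Integrating over $\sigma\ge0$ gives the terms $D^{1+2\alpha}/(\delta_s-\alpha\bar\delta_s)$ and $D^{1+\alpha}/(\delta_s+\alpha\delta_u)$. The unstable branch is symmetric. Bounding $\|Pz\|^\alpha,\|Qz\|^\alpha\le D^\alpha\|z\|^\alpha$ then produces
\[
  [F\eta]_\alpha\le K_\alpha(1+M).
\]
The power bookkeeping of $D$ is what the constants $D^{1+2\alpha}\cdot D^\alpha$ encode. Since $K_\alpha(1+M)=M$ for our choice of $M$, invariance follows.

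For $\zeta$, the same scheme is run on the explicit formula from \cref{thm:HG}, but now the orbits are $\Psi_{\pm\sigma}(v)$ instead of $\Phi_{\pm\sigma}(v)$. The main obstacle is therefore a growth bound for $\|\Psi_{-\sigma}v-\Psi_{-\sigma}w\|$ on the stable side. I would write $y=\Psi_{-\sigma}v-\Psi_{-\sigma}w$, split it into $P$ and $Q$ parts, and apply variation of constants from \eqref{eq:voc} together with the strong dichotomy bounds, namely \cref{lem:growth-SED} for backward growth on $E^s$. A Gronwall argument with Lipschitz constant $\delta_0$ and projector norms bounded by $D$ should give $\|\Psi_{-\sigma}v-\Psi_{-\sigma}w\|\lesssim e^{(\bar\delta_s+2D^2\delta_0)\sigma}\|v-w\|$, and similarly forward with $\bar\delta_u+2D^2\delta_0$. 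Feeding this into the Green integrals, with the decay $e^{-\delta_s\sigma}$ and $\min\{1,r\}\le r^\beta$, gives convergence exactly when $\beta(\bar\delta_s+2D^2\delta_0)<\delta_s$, and symmetrically for the unstable side. This yields every $\beta<\alpha_0^-(\delta_0)$. If the direct Gronwall estimate does not close with this exact constant, I would fall back to splitting into the two dichotomy components and accepting the factor $2D^2$ from the two projector bounds.

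Finally, on bounded subsets, $\|v-w\|\le C^{1-\alpha}\|v-w\|^\alpha$ makes the identity $\alpha$-Hölder there. Hence $H=\id+\eta^*$ and $H^{-1}=\id+\zeta$ are fiberwise Hölder on bounded sets.
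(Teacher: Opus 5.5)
Your proposal is correct and follows the paper's proof essentially step for step. It uses the same closed H\"older ball $\{\xi\in\mathcal B:[\xi]_\alpha\le R\}$ with $R=K_\alpha/(1-K_\alpha)$, invariance under $F$ via the bound $\delta_0(1+R)\|x-y\|^\alpha$ combined with \cref{lem:growth-SED}, Banach iteration from $0$, and for $\zeta$ a Gronwall argument on the variation-of-constants formula built on $\|\Phi_t u\|\le 2D^2e^{\bar\delta_u t}\|u\|$ and $\|\Phi_{-t}u\|\le 2D^2e^{\bar\delta_s t}\|u\|$ for $t\ge0$ (exactly your ``fallback''), which yields the rates $\bar\delta_{s}+2D^2\delta_0$ and $\bar\delta_{u}+2D^2\delta_0$.

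The deviations are cosmetic. You replace the paper's case split $r\le1$ versus $r\ge1$ by $\min\{1,r+Rr^\alpha\}\le\min\{1,r\}+Rr^\alpha\le(1+R)r^\alpha$, and you spell out why \eqref{eq:HoeldHGThm1} forces $K_\alpha<1$, which the paper only asserts.
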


\begin{proof}
First note that the hypotheses of \cref{thm:HG} are satisfied. Indeed,
a strong exponential dichotomy with data \[
  (P,D,\delta_s,\bar\delta_s,\delta_u,\bar\delta_u)
\]
is in particular an ordinary exponential dichotomy with data \[
  (P,D,\delta), \qquad \delta:=\min\{\delta_s,\delta_u\}.
\]
Moreover, since \(\alpha<\alpha_0\), we have \[
  \delta_s-\alpha\bar\delta_s>0, \qquad \delta_u-\alpha\bar\delta_u>0.
\]
Combining \cref{eq:HoeldHGThm1} with\[
  \min\{\delta_s-\alpha\bar\delta_s,\delta_u-\alpha\bar\delta_u\} \leq \min\{\delta_s,\delta_u\} = \delta
\]
and, because \(D\geq1\), \[
  4D^{1+3\alpha}\geq 2D,
\]
we obtain \[
  \delta_0<\frac{\delta}{2D}
.\]
Thus, the smallness assumption required in \cref{thm:HG} is satisfied and we obtain a fixed point \(\eta^*\).

We first prove the H\"older regularity of \(\eta^*\). For \(R>0\), define \[
  \mathcal B_R^\alpha:=\{\xi\in\mathcal B:[\xi]_\alpha\le R\}.
\]
This set is nonempty, since \(0\in\mathcal B_R^\alpha\), and it is closed in \(\mathcal B\). Indeed, if \(\xi_n\in\mathcal B_R^\alpha\) and \(\xi_n\to\xi\) in \(\mathcal B\), then for \(v,w\in E_b\), \(v\neq w\), \[
  \frac{\|\xi(v)-\xi(w)\|}{\|v-w\|^\alpha}=\lim_{n\to\infty}\frac{\|\xi_n(v)-\xi_n(w)\|}{\|v-w\|^\alpha}\le R.
\]
Hence, \(\xi\in\mathcal B_R^\alpha\).

We show that \(F\) leaves such a closed H\"older ball invariant for a suitable \(R\). Let \(\xi\in\mathcal B_R^\alpha\), and put \[
  H_\xi:=\id+\xi.
\]
For \(x,y\) in the same fiber and \(r:=\|x-y\|\), we have, if \(r\le1\), \[
  \|H_\xi(x)-H_\xi(y)\|\le r+Rr^\alpha\le (1+R)r^\alpha.
\]
If \(r\geq1\), then by \cref{def:CarPert} we obtain \[
  \|\mathcal{R}_b(t,H_\xi(x))-\mathcal{R}_b(t,H_\xi(y))\|\le\delta_0\le\delta_0 r^\alpha\le\delta_0(1+R)r^\alpha.
\]
Thus, for almost every \(t\), \[
  \|\mathcal{R}_b(t,H_\xi(x))-\mathcal{R}_b(t,H_\xi(y))\|\le \delta_0(1+R)\|x-y\|^\alpha .
\]
Now fix \(v,w\in E_b\), and set \[
  z:=v-w.
\]
Using \cref{def:fixed-point-operator,eq:ED-stable}, and the previous bound with \(x=\Phi_{-\sigma}v\), \(y=\Phi_{-\sigma}w\), we get \[
  I_s\leq D\delta_0(1+R)\int_0^\infty e^{-\delta_s\sigma}\|\Phi_{-\sigma}z\|^\alpha\,d\sigma,
\]
where \(I_s\) denotes the contribution of the stable branch to \[
\|(F\xi)(v)-(F\xi)(w)\|
.\] 
Similarly, \[
  I_u\leq D\delta_0(1+R)\int_0^\infty e^{-\delta_u\sigma}\|\Phi_{\sigma}z\|^\alpha\,d\sigma
.\]
By \cref{lem:growth-SED} we estimate \[
  I_s\le D\delta_0(1+R)\left[\frac{(D^2)^\alpha}{\delta_s-\alpha\bar\delta_s}\|Pz\|^\alpha+\frac{D^\alpha}{\delta_s+\alpha\delta_u}\|Qz\|^\alpha\right],
\]
and \[
  I_u\le D\delta_0(1+R)\left[\frac{D^\alpha}{\delta_u+\alpha\delta_s}\|Pz\|^\alpha+\frac{(D^2)^\alpha}{\delta_u-\alpha\bar\delta_u}\|Qz\|^\alpha\right].
\]
Since \(\|Pz\|,\|Qz\|\leq D\|z\|\), we obtain \[
  \|(F\xi)(v)-(F\xi)(w)\|\le K_\alpha(1+R)\|v-w\|^\alpha,
\]
with \(K_\alpha\) as in the statement. Hence,
\[
  [F\xi]_\alpha\le K_\alpha(1+R)
.\]

Since \cref{eq:HoeldHGThm1} implies \(K_\alpha<1\), choose \[
  R:=\frac{K_\alpha}{1-K_\alpha}.
\]
Then \[
  K_\alpha(1+R)=R,
\]
and hence \[
  F(\mathcal B_R^\alpha)\subseteq\mathcal B_R^\alpha.
\]
Starting the Banach iteration at \(0\), all iterates remain in \(\mathcal B_R^\alpha\), and they converge in \(\mathcal B\) to the unique fixed point \(\eta^*\). Since \(\mathcal B_R^\alpha\) is closed in \(\mathcal B\), we have \[
  \eta^*\in\mathcal B_R^\alpha.
\]
Thus, \[
  [\eta^*]_\alpha\le R=\frac{K_\alpha}{1-K_\alpha}.
\]

It remains to prove the H\"older estimate for the inverse correction. Recall that \[
  \zeta(v)=-\int_0^\infty \Phi_\sigma P\,\mathcal{R}_b(-\sigma,\Psi_{-\sigma}(v))\,d\sigma+\int_0^\infty \Phi_{-\sigma}Q\,\mathcal{R}_b(\sigma,\Psi_\sigma(v))\,d\sigma .
\]
We first record growth estimates for the nonlinear flow. Put \[
  M:=2D^2.
\]
For \(t\geq0\), the strong dichotomy estimates \cref{lem:growth-SED} imply \[
  \|\Phi_t u\|\le M e^{\bar\delta_u t}\|u\|,\qquad \|\Phi_{-t}u\|\le M e^{\bar\delta_s t}\|u\|.
\]
Indeed, decompose \(u=Pu+Qu\); the stable part contracts forward and grows backward at rate at most \(\bar\delta_s\), while the unstable part contracts backward and grows forward at rate at most \(\bar\delta_u\).

Let \(v,w\in E_b\). For \(t\geq0\), the variation-of-constants formula and the Lipschitz bound give \[
  \|\Psi_t(v)-\Psi_t(w)\|\le M e^{\bar\delta_u t}\|v-w\|+\int_0^t M e^{\bar\delta_u(t-s)}\delta_0\|\Psi_s(v)-\Psi_s(w)\|\,ds.
\]
Multiplying by \(e^{-\bar\delta_u t}\) and applying Gronwall gives \begin{equation}\label{eq:HoeldHG1}
  \|\Psi_t(v)-\Psi_t(w)\|\le M e^{(\bar\delta_u+M\delta_0)t}\|v-w\|,\qquad t\geq0.
\end{equation}
Similarly, using the variation-of-constants formula for negative time, we obtain \begin{equation}\label{eq:HoeldHG2}
  \|\Psi_{-t}(v)-\Psi_{-t}(w)\|\le M e^{(\bar\delta_s+M\delta_0)t}\|v-w\|,\qquad t\geq0.
\end{equation}

Let \[
  0<\beta<\min\left\{\frac{\delta_s}{\bar\delta_s+M\delta_0},\frac{\delta_u}{\bar\delta_u+M\delta_0}\right\}.
\]
Since \[
  \min\{1,r\}\le r^\beta\qquad(r\geq0),
\]
the Lipschitz assumption on \(\mathcal{R}\) gives, for almost every \(t\), \[
  \|\mathcal{R}_b(t,x)-\mathcal{R}_b(t,y)\|\le \delta_0\|x-y\|^\beta.
\]
Therefore, the stable branch of \(\zeta(v)-\zeta(w)\) is bounded by \[
  D\delta_0\int_0^\infty e^{-\delta_s\sigma}\|\Psi_{-\sigma}(v)-\Psi_{-\sigma}(w)\|^\beta\,d\sigma.
\]
Using \eqref{eq:HoeldHG2} this is at most \[
  D\delta_0M^\beta\int_0^\infty e^{-\delta_s\sigma}e^{\beta(\bar\delta_s+M\delta_0)\sigma}\,d\sigma\,\|v-w\|^\beta =\frac{D\delta_0M^\beta}{\delta_s-\beta(\bar\delta_s+M\delta_0)}\|v-w\|^\beta.
\]
Similarly, using \eqref{eq:HoeldHG1}, the unstable branch is bounded by \[
  \frac{D\delta_0M^\beta}{\delta_u-\beta(\bar\delta_u+M\delta_0)}\|v-w\|^\beta.
\]
Since \(M=2D^2\), we obtain \[
  \|\zeta(v)-\zeta(w)\|\le C_\beta^-\|v-w\|^\beta,
\]
where \[
  C_\beta^-:=D\delta_0(2D^2)^\beta\left[\frac{1}{\delta_s-\beta(\bar\delta_s+2D^2\delta_0)}+\frac{1}{\delta_u-\beta(\bar\delta_u+2D^2\delta_0)}\right].
\]
The denominators are positive precisely for \[
  \beta<\min\left\{\frac{\delta_s}{\bar\delta_s+2D^2\delta_0},\frac{\delta_u}{\bar\delta_u+2D^2\delta_0}\right\}.
\]
Hence, \(\zeta\) is fiberwise \(\beta\)-H\"older for every such \(\beta\).

Finally, since \(H=\id+\eta^*\) and \(H^{-1}=\id+\zeta\), the correction terms are globally fiberwise H\"older. On a bounded subset of a fiber, the identity map is also \(\alpha\)-H\"older, respectively \(\beta\)-H\"older, after adjusting the constant. Thus, \(H\) and \(H^{-1}\) are fiberwise H\"older on bounded fiber subsets.
\end{proof}

\begin{remark}\label{rem:BP-rigidity}
   For control systems, linearization by static feedback, i.e. by transformations acting pointwise on the state and the current control value, is rare: at generic points, topological linearizability of this kind already implies smooth linearizability \cite{BaratchartPomet2009LocalLinearization}, meaning topological conjugacy to the linear approximation is ``as rare as differential conjugacy''. This is contrasted by the situation in our setting, where topological and Hölder-linearizations depend only on spectral and smallness conditions, whereas smooth linearizations additionally require non-resonance conditions, as discussed following \cref{lem:growth-SED}. An approach to obtain topological conjugacy results for control-affine systems, where control is instead viewed as a base, will be discuss in \cref{subsec:control-affine}.
\end{remark}

\subsection{Local Hartman--Grobman theorem}
\label{subsec:local-HG}

For \(r>0, b \in B\) write \[
  \mathcal{E}(r):=\{v\in \mathcal{E}:\|v\|<r\},\qquad E_b(r):=E_b\cap \mathcal{E}(r).
\]
\begin{definition}[Local Carath\'eodory perturbation]
Let \(r_0>0\). A family \[
  \widetilde{\mathcal{R}}_b(t,\cdot):E_{b\cdot t}(r_0)\to E_{b\cdot t}
\]
is called a local Carath\'eodory perturbation if it satisfies the measurability, coherence, and zero-section assumptions of \cref{def:CarPert} whenever the involved curves remain in \(\mathcal{E}(r_0)\).
\end{definition}

We now record the local form of the theorem. The argument is the usual localization by a fiberwise cutoff: one replaces the local perturbation by a globally bounded Carath\'eodory perturbation, applies the global theorem, and then restricts to the region where the cutoff is identically one.

\begin{lemma}[Cutoff estimate]\label{lem:cutoff-estimate}
Let \(\widetilde{\mathcal{R}}\) be defined on \(\mathcal{E}(r_0)\), vanish on the zero section, and satisfy \[
  L(r):=\sup_b \operatorname*{ess\,sup}_{t\in\R} \mathrm{Lip}\bigl(\widetilde{\mathcal{R}}_b(t,\cdot)|_{E_{b\cdot t}(2r)} \bigr)\to 0 \qquad (r \to 0).
\]
For \(0<2r<r_0\), let \[
  \mathcal{R} _b^r(t,v) :=
  \begin{cases}
  \chi(\|v\|/r)\widetilde{\mathcal{R}}_b(t,v),& \|v\|<2r,\\
  0,& \|v\|\ge 2r,
  \end{cases}
\]
where \(\chi\colon[0,\infty)\to[0,1]\) is Lipschitz with \(\chi=1\) on \([0,1]\) and \(\chi=0\) on \([2,\infty)\). Then \(\mathcal{R}^r\) is a global Carath\'eodory perturbation, \(\mathcal{R}^r=\widetilde{\mathcal{R}}\) on \(\mathcal{E}(r)\), and \[
  \|\mathcal{R}_b^r(t,v)-\mathcal{R}_b^r(t,w)\| \leq \delta_r\min\{1,\|v-w\|\}, \qquad
\] with \(  \delta_r\to 0\) as \( r \to 0\).
\end{lemma}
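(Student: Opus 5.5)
The plan is to verify the Carathéodory axioms of \cref{def:CarPert} for \(\mathcal{R}^r\) one at a time, and then obtain the quantitative Lipschitz bound from a two-case estimate.

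\textbf{Structural axioms.} Coherence and the zero-section property are inherited directly from \(\widetilde{\mathcal{R}}\). The cutoff factor \(\chi(\|v\|/r)\) depends only on the point \(v\) and not on \(t\) or \(b\), and \(\widetilde{\mathcal{R}}_b(t,0)=0\). The identity \(\mathcal{R}^r=\widetilde{\mathcal{R}}\) on \(\mathcal{E}(r)\) holds because \(\chi=1\) on \([0,1]\).

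\textbf{Measurability.} Given a curve \(y(t)\in E_{b\cdot t}\) with strongly measurable pullback \(\Phi_{-t}y(t)\), consider the modified curve
\[
\tilde y(t):=y(t)\,\mathbf 1_{\{\|y(t)\|<2r\}}.
\]
It stays in \(\mathcal{E}(r_0)\), and its pullback \(\Phi_{-t}\tilde y(t)\) is still strongly measurable. To see this, note that \(t\mapsto\|y(t)\|=\|\Phi_t\Phi_{-t}y(t)\|\) is measurable. Indeed, \(\Phi\) is continuous, so on compact time intervals \(t\mapsto\Phi_t x(t)\) composed with the continuous norm is measurable for strongly measurable \(x\); one approximates \(x\) by simple functions and uses \cref{lem:KomGrowth}. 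The local measurability hypothesis then applies to \(\tilde y\). Finally,
\[
\Phi_{-t}\mathcal{R}^r_b(t,y(t))=\chi(\|y(t)\|/r)\,\Phi_{-t}\widetilde{\mathcal{R}}_b(t,\tilde y(t)),
\]
which is a product of a measurable scalar function and a strongly measurable function.

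\textbf{Lipschitz bound.} Set \(\ell:=\mathrm{Lip}(\chi)\) and \(L:=L(r)\). For \(\|v\|<2r\) we have \(\|\widetilde{\mathcal{R}}_b(t,v)\|\le L\|v\|\le 2rL\), so \(\|\mathcal{R}^r\|\le 2rL\) everywhere. For the Lipschitz estimate, take \(v,w\) in a fiber with \(\|v\|\le\|w\|\).
\begin{itemize}
\item If both lie outside the ball of radius \(2r\), the difference is zero.
\item If \(\|v\|<2r\), write
\[
\chi(\|v\|/r)\widetilde{\mathcal{R}}(v)-\chi(\|w\|/r)\widetilde{\mathcal{R}}(w)
=\chi(\|v\|/r)\bigl[\widetilde{\mathcal{R}}(v)-\widetilde{\mathcal{R}}(w)\bigr]+\bigl[\chi(\|v\|/r)-\chi(\|w\|/r)\bigr]\widetilde{\mathcal{R}}(w),
\]
interpreting terms with \(\|w\|\ge2r\) via \(\chi=0\).
\end{itemize}
The one genuinely delicate point is that \(w\) may lie outside the domain where \(\widetilde{\mathcal{R}}\) has Lipschitz constant \(L\). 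If \(\|w\|\ge 2r\), replace \(w\) by the point \(w'\) on the segment \([v,w]\) with \(\|w'\|=2r\). Then \(\mathcal{R}^r(w)=\mathcal{R}^r(w')=0\) and \(\|v-w'\|\le\|v-w\|\), which reduces to the case where both points lie in the closed ball of radius \(2r\). Since \(2r<r_0\), the Lipschitz constant \(L\) applies there, after passing to the closure by continuity or slightly enlarging the ball.

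In that case the first bracket is at most \(L\|v-w\|\). The second term is at most \((\ell/r)\|v-w\|\cdot 2rL=2\ell L\|v-w\|\). Hence the Lipschitz constant of \(\mathcal{R}^r\) is at most \((1+2\ell)L\).

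\textbf{Conclusion.} Combining the two bounds gives
\[
\|\mathcal{R}^r_b(t,v)-\mathcal{R}^r_b(t,w)\|\le\min\{4rL,(1+2\ell)L\|v-w\|\}\le\delta_r\min\{1,\|v-w\|\}
\]
with \(\delta_r:=\max\{4r,1+2\ell\}L(r)\). This holds for almost every \(t\), and \(\delta_r\to0\) as \(r\to0\) because \(L(r)\to0\).
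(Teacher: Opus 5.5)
Your proof is correct and follows the paper's route: the bound $\|\mathcal{R}^r\|\le 2rL(r)$ gives $4rL(r)$ for the difference, the cutoff gives a Lipschitz constant $C_\chi L(r)$, and $\delta_r$ is the maximum of the two. You also supply details the paper only asserts: the explicit constant $C_\chi=1+2\,\mathrm{Lip}(\chi)$, the reduction to the boundary point $w'$ when $\|w\|\ge 2r$, and the truncated curve $\tilde y$ needed to apply the local measurability hypothesis.
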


\begin{proof}
Measurability and coherence are inherited from \(\widetilde{\mathcal{R}}\),
since the cutoff only depends on the fiber norm. Moreover, on \(\mathcal{E}(2r)\), \[
  \|\widetilde{\mathcal{R}}_b(t,v)\| \leq L(r)\|v\| \leq 2rL(r).
\]
Using the Lipschitz continuity of \(\chi\) and of the norm on each fiber,
one obtains \[
  \operatorname{Lip}(\mathcal{R}_b^r(t,\cdot)) \leq C_\chi L(r)
\]
for a constant \(C_\chi\) depending only on \(\chi\). Also, since \(0\le\chi\le1\), \[
  \|\mathcal{R}_b^r(t,v)\|\leq 2rL(r).
\]
Hence, we get \[
    \|\mathcal{R}^{r}_b(t,v)- \mathcal{R}^{r}_b(t,w)\| \leq \|\mathcal{R}^{r}_b(t,v)\| + \|\mathcal{R}^{r}_b(t,w)\| \leq 4r L(r)   
,\] which means \[
  \|\mathcal{R}_b^r(t,v)-\mathcal{R}_b^r(t,w)\| \leq \delta_r\min\{1,\|v-w\|\}, 
\]
where \( \delta_r:=\max\{C_\chi L(r),4rL(r)\}\), so since \(L(r)\to 0\), also \(\delta_r\to 0\).
\end{proof}

\begin{theorem}[Local Hartman--Grobman theorem]\label{thm:local-HG}
Let \(\Phi\) have an ED with data \((P,D,\delta)\) and \(\widetilde{\mathcal R}\) be a local Carath\'eodory perturbation on \(\mathcal{E}(r_0)\), with \(\widetilde{\mathcal R}_b(t,0)=0\), and assume \[
  L(r):=\sup_b\operatorname*{ess\,sup}_{t\in\mathbb R} \operatorname{Lip}\bigl( \widetilde{\mathcal R}_b(t,\cdot)|_{E_{b\cdot t}(2r)} \bigr)\to0
  \qquad(r\to0).
\]
Denote by \(\widetilde\Psi\) the corresponding local flow.

For \(0<2r<r_0\), let \(\mathcal R^r\) be the standard fiberwise cutoff of
\(\widetilde{\mathcal R}\). Then \(\mathcal R^r\) is a global
Carath\'eodory perturbation with constant \(\delta_r\to0\), and hence,
by \cref{prop:cutoff-flow}, generates a unique global solution cocycle
\(\Psi^r\).

For all sufficiently small \(r>0\), there is a fiberwise homeomorphism \[
  H_r=\id+\eta_r
\]
such that \[
  H_r(\Phi_t v)=\Psi^r_t(H_r(v))
  \qquad(t\in\mathbb R,\ v\in \mathcal{E}).
\]
Moreover, \[
  \|\eta_r\|_{\mathcal B} \leq \frac{4DrL(r)}{\delta},
  \qquad\text{so}\qquad \frac{\|\eta_r\|_{\mathcal B}}{r}\to0 .
\]
In particular, for \(r>0\) small enough,\[ 
  \rho_r:=r-\|\eta_r\|_{\mathcal B}>0.
\]

If \[
  \Phi_s v\in \mathcal{E}(\rho_r)
\]
for all \( s\) between \( 0\) and \( t\), then \[
  H_r(\Phi_t v)=\widetilde\Psi_t(H_r(v)).
\]
Thus, \(H_r\) conjugates the linear flow to the original local flow along
such orbit segments.

If the cutoff system satisfies \[
  F^r(\mathcal C)\subseteq\mathcal C, \qquad \zeta^r\in\mathcal C,
\]
then \(H_r\) and \(H_r^{-1}\) are continuous bundle maps on their local
domains and \( \Psi^{r}\) is a continuous SPF. If \(\Phi\) has a strong ED, the corresponding local conjugacy is fiberwise H\"older by \cref{thm:Holder-HG}.
\end{theorem}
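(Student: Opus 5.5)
The plan is to reduce everything to the global \cref{thm:HG} applied to the cutoff perturbation \(\mathcal R^r\), and then localize.

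\textbf{Step 1: the global conjugacy.} By \cref{lem:cutoff-estimate}, \(\mathcal R^r\) is a global Carath\'eodory perturbation with constant \(\delta_r\to0\), and it agrees with \(\widetilde{\mathcal R}\) on \(\mathcal E(r)\). Hence \cref{prop:cutoff-flow} yields the unique cocycle \(\Psi^r\) satisfying the variation-of-constants formula. For \(r\) small we have \(\delta_r<\delta/(2D)\), so \cref{thm:HG} gives a unique \(\eta_r\in\mathcal B\) with \(H_r=\id+\eta_r\) a fiberwise homeomorphism conjugating \(\Phi\) to \(\Psi^r\).

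\textbf{Step 2: the size bound.} I would not use the Lipschitz constant \(\delta_r\) for this, but the sharper sup bound from the proof of \cref{lem:cutoff-estimate}, namely \(\|\mathcal R^r_b(t,v)\|\le 2rL(r)\). Inserting it into the integral formula for \(\eta_r=F\eta_r\) and using \eqref{eq:ED-stable}--\eqref{eq:ED-unstable} as in \cref{lem:F-welldef} gives
\[
\|\eta_r\|_{\mathcal B}\le 2D\cdot 2rL(r)/\delta=4DrL(r)/\delta .
\]
Since \(L(r)\to0\), it follows that \(\|\eta_r\|_{\mathcal B}/r\to0\), and therefore \(\rho_r>0\) for small \(r\).

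\textbf{Step 3: localization, the main point.} Suppose \(\Phi_sv\in\mathcal E(\rho_r)\) for all \(s\) between \(0\) and \(t\). Then
\[
\|H_r(\Phi_sv)\|\le\|\Phi_sv\|+\|\eta_r\|_{\mathcal B}<\rho_r+\|\eta_r\|_{\mathcal B}=r .
\]
By the conjugacy identity, the curve \(s\mapsto\Psi^r_s(H_r(v))=H_r(\Phi_sv)\) therefore stays in \(\mathcal E(r)\) on that segment. There \(\mathcal R^r=\widetilde{\mathcal R}\), so this curve satisfies the variation-of-constants equation with the perturbation \(\widetilde{\mathcal R}\).

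The obstacle is uniqueness for the local flow \(\widetilde\Psi\): I must show that the local solution starting at \(H_r(v)\) coincides with this curve. I would argue as in the Gronwall step of \cref{lem:fixedpoint-conjugacy}. Pull back by \(\Phi_{-s}\) and use the local Lipschitz constant \(L(r)\) of \(\widetilde{\mathcal R}\) on \(\mathcal E(2r)\), together with \cref{lem:KomGrowth} on the compact interval. The one subtlety is that \(\widetilde\Psi\) might a priori leave \(\mathcal E(r)\). I would handle this with a continuity/first-exit argument: up to the first exit time from \(\mathcal E(2r)\), both solutions solve the same equation, so Gronwall makes them equal. Since the \(\Psi^r\)-curve stays in \(\mathcal E(r)\), no exit can occur before time \(t\). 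This gives \(H_r(\Phi_tv)=\widetilde\Psi_t(H_r(v))\).

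\textbf{Step 4: regularity statements.} Continuity of \(H_r\) and \(H_r^{-1}\) under \(F^r(\mathcal C)\subseteq\mathcal C\) and \(\zeta^r\in\mathcal C\) is exactly the last clause of \cref{thm:HG} applied to the cutoff system. For fiberwise H\"older regularity, apply \cref{thm:Holder-HG} to \(\mathcal R^r\). Its smallness condition \eqref{eq:HoeldHGThm1} holds for small \(r\) because \(\delta_r\to0\).
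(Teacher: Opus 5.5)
Your proposal is correct and follows the paper's proof essentially step for step. The shared steps are: the cutoff via \cref{lem:cutoff-estimate}; the global conjugacy from \cref{thm:HG}; the sup bound $2rL(r)$ (rather than $\delta_r$) inserted into the estimate of \cref{lem:F-welldef}; the localization estimate $\|H_r(\Phi_s v)\|<r$; and the continuity and H\"older clauses obtained by applying \cref{thm:HG} and \cref{thm:Holder-HG} to the cutoff system. The one difference is in identifying $\Psi^r_s(H_r(v))$ with $\widetilde\Psi_s(H_r(v))$: the paper simply cites \cref{prop:cutoff-flow}, while you spell out the first-exit/Gronwall uniqueness argument with the local Lipschitz constant $L(r)$ on $\mathcal{E}(2r)$, which makes that step more explicit.
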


\begin{proof}
Choose a Lipschitz cutoff \(\chi\colon[0,\infty)\to[0,1]\) with \(\chi=1\) on \([0,1]\) and
\(\chi=0\) on \([2,\infty)\), and set \[
  \mathcal R_b^r(t,v) = \chi(\|v\|/r)\widetilde{\mathcal R}_b(t,v)
\]
on \(\mathcal{E}(2r)\), extended by \(0\) outside \(\mathcal{E}(2r)\). Since
\(\widetilde{\mathcal R}_b(t,0)=0\), one has \[
  \|\widetilde{\mathcal R}_b(t,v)\|\le 2rL(r) \qquad(\|v\|<2r),
\]
and the cutoff estimate gives \[
  \|\mathcal R_b^r(t,v)-\mathcal R_b^r(t,w)\| \leq \delta_r\min\{1,\|v-w\|\},
\]
and \( \delta_r\to0 \) as \( r \to 0\). Moreover, \(\mathcal R^r=\widetilde{\mathcal R}\) on \(\mathcal{E}(r)\).

Choose \(r\) small enough such that \[
  \delta_r<\frac{\delta}{2D}.
\]
By \cref{prop:cutoff-flow}, \(\mathcal R^r\) generates a unique global
solution cocycle \(\Psi^r\). Applying \cref{thm:HG} to the cutoff system
gives \(H_r=\id+\eta_r\) and \[
  H_r(\Phi_t v)=\Psi^r_t(H_r(v)).
\]
Since \(\eta_r\) is the fixed point of the operator \(F\) of \cref{def:fixed-point-operator} built with \(\mathcal R^r\), and \(\|\mathcal R^r_b(t,v)\|\le 2rL(r)\) for all \(b\), \(v\) and almost all \(t\) by the proof of \cref{lem:cutoff-estimate}, the estimate in the proof of \cref{lem:F-welldef}, with \(\delta_0\) replaced by this bound, gives \[
  \|\eta_r\|_{\mathcal B}=\|F\eta_r\|_{\mathcal B}\le\frac{2D}{\delta}\,2rL(r)=\frac{4DrL(r)}{\delta}.
\]
Since \(L(r)\to0\), we get \(\|\eta_r\|_{\mathcal B}<r\) for \(r\) small, that is, \[
  \rho_r:=r-\|\eta_r\|_{\mathcal B}>0.
\]

Now assume \(\Phi_s v\in \mathcal{E}(\rho_r)\) for all \(s\) between \(0\) and
\(t\). Using the established conjugation \( \Psi^{r}_s(H_r(v)) = H_r(\Phi_s v)\), we calculate \[
  \| \Psi^{r}_s(H_r(v))\| =  \|H_r(\Phi_s v)\| \leq \|\Phi_s v\|+\|\eta_r\|_{\mathcal B} <r
,\] 
meaning this orbit also solves the original local equation. By \cref{prop:cutoff-flow}, \[
  \Psi_s^r(H_r(v))=\widetilde\Psi_s(H_r(v))
\]
on the considered interval, and in particular \[
  H_r(\Phi_t v)=\widetilde\Psi_t(H_r(v))
.\]

The continuity and H\"older assertions follow by applying the
corresponding parts of \cref{thm:HG} and \cref{thm:Holder-HG} to the
cutoff system.
\end{proof}

\subsection{Nonuniform Hartman--Grobman theorems}
\label{subsec:nonuniform-HG}

We now record the nonuniform versions. The point is that there is no new fixed-point argument: after passing to Lyapunov norms, the nonuniform dichotomy becomes a uniform dichotomy on an adapted Banach bundle, and the preceding theorems apply verbatim. We denote in this section the spaces of bundle maps in the adapted Lyapunov norms of \cref{thm:ED-NUED-equiv} and \cref{thm:StNUED-StED-Equiv} by \( \mathcal{B}^{\mathrm L}\) (resp.\ \( \mathcal{C}^{\mathrm L}\) and the norm \( \|\cdot \|^{\mathrm L} \)).

\begin{theorem}[Nonuniform Hartman--Grobman theorem]
\label{thm:NU-HG-continuous}
Assume that \(\Phi\) has a nonuniform exponential dichotomy with data \[
  (P,D,\varepsilon,\delta).
\]
Fix \[
  0<\delta'<\delta,
\]
and let \(\|\cdot\|^{\mathrm L}\) denote the Lyapunov bundle norm associated with \(\delta'\). Assume that the Carath\'eodory source \(\mathcal{R}\) satisfies the hypotheses of \cref{def:CarPert} with respect to \(\|\cdot\|^{\mathrm L}\), with Lipschitz constant \(\delta_0\), and that \[
  \delta_0<\frac{\delta'}{2}.
\]
Assume also the continuity-layer condition in the Lyapunov bundle, \[
  F^{\mathrm L}(\mathcal C^{\mathrm L})\subseteq \mathcal C^{\mathrm L}, \qquad \zeta^{\mathrm L}\in\mathcal C^{\mathrm L},
\]
where \(F^{\mathrm L}\) denotes the fixed-point operator computed using the Lyapunov norm, and \(\zeta^{\mathrm L}\) is the inverse correction. Then there is a unique \(\eta^*\in\mathcal B^{\mathrm L}\) such that \(H:=\id+\eta^*\) satisfies \[
  H(\Phi_t v)=\Psi_t(H(v))\qquad(t\in\R,\ v\in \mathcal{E}).
\]
Moreover, \[
  \eta^*,\zeta^{\mathrm L}\in\mathcal C^{\mathrm L},
\]
and \(H\) restricts to a homeomorphism on every fiber, with inverse \[
  H^{-1}=\id+\zeta^{\mathrm L}.
\]
The maps \(H\) and \(H^{-1}\) are jointly continuous for the adapted Lyapunov bundle topology, and hence also for the original bundle topology.
\end{theorem}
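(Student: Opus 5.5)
The plan is to reduce everything to the uniform case on the adapted bundle and then transfer the conclusions back. First, apply \cref{thm:ED-NUED-equiv}(i) with the chosen \(\delta'\in(0,\delta)\). This gives the Lyapunov bundle norm \(\|\cdot\|^{\mathrm L}\), which is \(\varepsilon\)-equivalent to \(\|\cdot\|\). In that norm, \(\Phi\) has an exponential dichotomy with data \((P,1,\delta')\). The projection family \(P\) is unchanged, so it remains invariant and continuous along sections: continuity of sections is a topological notion, and the proof of \cref{thm:ED-NUED-equiv} shows that \((\mathcal{E},\|\cdot\|^{\mathrm L})\) is a Banach bundle on the same total space. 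Because \(D=1\), the smallness condition \(\delta_0<\delta'/2\) is exactly the hypothesis \(\delta_0<\delta/(2D)\) of \cref{thm:HG} written in the adapted data.

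Next, verify the standing assumptions of \cref{rem:StanAss} in the Lyapunov bundle. The Carath\'eodory hypotheses on \(\mathcal{R}\) are assumed with respect to \(\|\cdot\|^{\mathrm L}\). Measurability, coherence and the zero-section condition involve only the linear structure and \(\Phi\), so they are norm-independent anyway. The cocycle \(\Psi\) given by the variation-of-constants formula is the same object in both norms, since \eqref{eq:voc} does not refer to a norm. Its uniqueness follows from \cref{prop:cutoff-flow} applied in the Lyapunov bundle; \cref{lem:KomGrowth} holds there because it only uses continuity of \(\Phi\) and the Banach bundle axioms. Then \cref{thm:HG}, applied to \((\mathcal{E},\|\cdot\|^{\mathrm L})\) with the continuity-layer hypothesis \(F^{\mathrm L}(\mathcal C^{\mathrm L})\subseteq\mathcal C^{\mathrm L}\), \(\zeta^{\mathrm L}\in\mathcal C^{\mathrm L}\), yields all of the following:
\begin{itemize}
\item a unique fixed point \(\eta^*\in\mathcal B^{\mathrm L}\) with \(H=\id+\eta^*\) satisfying the conjugacy identity;
\item fiberwise homeomorphy of \(H\) with inverse \(\id+\zeta^{\mathrm L}\);
\item \(\eta^*,\zeta^{\mathrm L}\in\mathcal C^{\mathrm L}\), hence joint continuity of \(H\) and \(H^{-1}\) for the Lyapunov bundle topology.
\end{itemize}
Uniqueness is understood within \(\mathcal B^{\mathrm L}\), i.e.\ among corrections that are bounded in the Lyapunov norm, which is how the statement is phrased.

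Finally, transfer continuity to the original topology. This is the step I expect to need the most care, since the identity \((\mathcal{E},\|\cdot\|)\to(\mathcal{E},\|\cdot\|^{\mathrm L})\) is not a bundle isomorphism (the distortion \(N\) is unbounded). I will show that it is nevertheless a homeomorphism of total spaces, as claimed in \cref{rem:eps-equiv-topology}. The tool is \cref{lem:conv-criterion}: continuous local sections are the same for both structures, by the continuity of \(\|\cdot\|^{\mathrm L}\) proved in \cref{thm:ED-NUED-equiv} together with \ref{ax:nbhd}. Given \(w_n\to w_0\) and a local section \(\tau\) through \(w_0\), the quantities \(\|w_n-\tau(\pi(w_n))\|\) and \(\|w_n-\tau(\pi(w_n))\|^{\mathrm L}\) control each other up to the factor \(2DN(\pi(w_n))\). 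This factor stays bounded as \(\pi(w_n)\to\pi(w_0)\), because \(N\) is continuous. Hence convergent sequences coincide in the two topologies, and by first countability (\cref{lem:first-countable}) the topologies agree. Joint continuity of \(H\) and \(H^{-1}\) is therefore topology-intrinsic, which gives the last assertion.
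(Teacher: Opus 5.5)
Your proposal is correct and follows the paper's proof: pass to the Lyapunov bundle via \cref{thm:ED-NUED-equiv} to get uniform dichotomy data $(P,1,\delta')$, so that $\delta_0<\delta'/2$ is the smallness hypothesis of \cref{thm:HG}, apply \cref{thm:HG} there, and transfer joint continuity back using $\|v\|\le\|v\|^{\mathrm L}\le 2DN(b)\|v\|$ with $N$ locally bounded. Your additional checks (the Carath\'eodory hypotheses and the cocycle $\Psi$ do not depend on the choice of norm, and the convergence-criterion argument that the two topologies coincide) only spell out what the paper asserts in one line; the last of these is in fact automatic, because \cref{thm:ED-NUED-equiv} verifies the bundle axioms for $\|\cdot\|^{\mathrm L}$ on the same topological total space $\mathcal{E}$.
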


\begin{proof}
By the Lyapunov-norm reduction for NUED, \cref{thm:ED-NUED-equiv}, the same linear SPF \(\Phi\), viewed
on the Lyapunov bundle \((\mathcal{E},\pi,B,\|\cdot\|^{\mathrm L})\), has a uniform ED
with data
\[
  (P,1,\delta').
\]
Thus, the smallness condition becomes
\[
  \delta_0<\frac{\delta'}{2}.
\]
The normal Hartman--Grobman theorem applied on the Lyapunov bundle gives the
unique fixed point \(\eta^*\in\mathcal B^{\mathrm L}\), the conjugacy identity,
and the fiberwise inverse \(H^{-1}=\id+\zeta^{\mathrm L}\). The additional
continuity-layer hypothesis gives
\[
  \eta^*,\zeta^{\mathrm L}\in\mathcal C^{\mathrm L}.
\]
Finally, the Lyapunov norm is locally equivalent to the original fiber norm,
so the adapted and original Banach bundle topologies agree. Hence, joint
continuity in the Lyapunov bundle is joint continuity in the original bundle.
\end{proof}

\begin{theorem}[Nonuniform H\"older Hartman--Grobman theorem]
\label{thm:NU-HG-Holder}
Assume that \(\Phi\) has a strong nonuniform exponential dichotomy with data
\[
  (P,D,\varepsilon,\delta_s,\bar\delta_s,\delta_u,\bar\delta_u).
\]
Choose rates
\[
  0<\delta_s'<\delta_s,\qquad 0<\delta_u'<\delta_u,
\]
and
\[
  \bar\delta_s'>\bar\delta_s+\varepsilon,\qquad
  \bar\delta_u'>\bar\delta_u+\varepsilon.
\]
Let \(\|\cdot\|^{\mathrm{str}}\) be the corresponding strong Lyapunov norm.
Set
\[
  \alpha_0':=\min\left\{\frac{\delta_s'}{\bar\delta_s'},\frac{\delta_u'}{\bar\delta_u'}\right\}.
\]
Assume that \(\mathcal{R}\) satisfies the hypotheses of \cref{def:CarPert} with
respect to \(\|\cdot\|^{\mathrm{str}}\), with Lipschitz constant \(\delta_0\).
Let \(\alpha\in(0,\alpha_0')\), and assume
\[
  \delta_0<\frac{\min\{\delta_s'-\alpha\bar\delta_s',\delta_u'-\alpha\bar\delta_u'\}}{4}.
\]
Then the correction term \(\eta^*\) of the nonuniform Hartman--Grobman
conjugacy is fiberwise \(\alpha\)-H\"older in the strong Lyapunov norm:
\[
  \|\eta^*(v)-\eta^*(w)\|^{\mathrm{str}}\le
  \frac{K_\alpha^{\mathrm{str}}}{1-K_\alpha^{\mathrm{str}}}
  \bigl(\|v-w\|^{\mathrm{str}}\bigr)^\alpha
\]
for all \(v,w\) in the same fiber, where
\[
  K_\alpha^{\mathrm{str}}
  :=
  \delta_0\left[
  \frac{1}{\delta_s'-\alpha\bar\delta_s'}
  +\frac{1}{\delta_u'-\alpha\bar\delta_u'}
  +\frac{1}{\delta_s'+\alpha\delta_u'}
  +\frac{1}{\delta_u'+\alpha\delta_s'}
  \right].
\]
Moreover, if \(H^{-1}=\id+\zeta\), then \(\zeta\) is fiberwise
\(\beta\)-H\"older in the strong Lyapunov norm for every
\[
  \beta<
  \min\left\{
  \frac{\delta_s'}{\bar\delta_s'+2\delta_0},
  \frac{\delta_u'}{\bar\delta_u'+2\delta_0}
  \right\}.
\]
Consequently, \(H\) and \(H^{-1}\) are fiberwise H\"older on bounded subsets
of the fibers, measured in the strong Lyapunov norm.
\end{theorem}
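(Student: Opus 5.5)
The plan is to reduce the statement to the uniform Hölder theorem, \cref{thm:Holder-HG}, applied on the strong Lyapunov bundle; no new fixed-point argument is needed.

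First, I invoke \cref{thm:StNUED-StED-Equiv} with the chosen rates \(\delta_s',\delta_u',\bar\delta_s',\bar\delta_u'\). It produces a \(2\varepsilon\)-equivalent Banach bundle norm \(\|\cdot\|^{\mathrm{str}}\) on the same underlying bundle. With respect to this norm, \(\Phi\) has a strong exponential dichotomy with uniform data \((P,1,\delta_s',\bar\delta_s',\delta_u',\bar\delta_u')\). The perturbation \(\mathcal R\) is assumed to be Carath\'eodory with respect to \(\|\cdot\|^{\mathrm{str}}\), with constant \(\delta_0\). So the standing assumptions of \cref{rem:StanAss} hold on the bundle \((\mathcal E,\pi,B,\|\cdot\|^{\mathrm{str}})\), with \(D=1\) and \(\alpha_0\) replaced by \(\alpha_0'\).

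Next, I check the smallness hypothesis \eqref{eq:HoeldHGThm1}. With \(D=1\) the factor \(4D^{1+3\alpha}\) equals \(4\), so the assumed bound on \(\delta_0\) is exactly \eqref{eq:HoeldHGThm1} for the primed rates. As in the first paragraph of the proof of \cref{thm:Holder-HG}, this implies \(\delta_0<\delta'/2\) with \(\delta'=\min\{\delta_s',\delta_u'\}\). Hence \cref{thm:HG}, equivalently \cref{thm:NU-HG-continuous} on the Lyapunov bundle, yields the unique fixed point \(\eta^*\in\mathcal B^{\mathrm L}\) and the inverse correction \(\zeta\).

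Then \cref{thm:Holder-HG} applies verbatim in the strong Lyapunov norm. Substituting \(D=1\) into its constants gives the following.
\begin{itemize}
\item \(K_\alpha\) becomes \(K_\alpha^{\mathrm{str}}\), since every power of \(D\) equals \(1\). The Hölder estimate for \(\eta^*\) with constant \(K_\alpha^{\mathrm{str}}/(1-K_\alpha^{\mathrm{str}})\) follows.
\item \(2D^2\delta_0\) becomes \(2\delta_0\), which gives the stated range of \(\beta\) for \(\zeta\).
\end{itemize}
The last assertion, Hölder continuity on bounded subsets of fibers, is then immediate, as in \cref{thm:Holder-HG}.

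The main obstacle is bookkeeping rather than analysis. The conjugacy obtained in the strong Lyapunov bundle must be the same as "the nonuniform Hartman--Grobman conjugacy". If that phrase refers to the fixed point built from the Lyapunov norm of \cref{thm:ED-NUED-equiv}, I have to show it coincides with the one built here. I would argue as follows. The two norms are \(\varepsilon\)- and \(2\varepsilon\)-equivalent to the original norm, so they are equivalent to each other on each fiber. Both corrections are therefore fiberwise bounded, and by \cref{lem:fixedpoint-conjugacy} both conjugate \(\Phi\) to \(\Psi\). Their difference \(\xi\) is handled as in the proof of \cref{thm:HG}, where \(H\circ k=\id\) is shown. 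Using the uniform ED in the strong Lyapunov norm, the variation-of-constants argument gives \(\sup\|\xi\|\le q\min\{1,\sup\|\xi\|\}\), which forces \(\xi=0\). The difficulty is that \(\sup\|\xi\|\) must be finite in the strong norm, and boundedness in the other norm does not give this globally. If this step fails, I would instead define the nonuniform conjugacy as the one constructed in the strong Lyapunov bundle. I would also note that finiteness of the strong Lyapunov norm relies on the estimate \(e^{-\bar\delta_s'\tau}\|\Phi(-\tau,b)x\|\le DN(b)e^{-(\bar\delta_s'-\bar\delta_s-\varepsilon)\tau}\|x\|\) recorded in the proof of \cref{thm:StNUED-StED-Equiv}.
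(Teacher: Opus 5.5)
Your proposal is correct and is essentially the paper's proof. \cref{thm:StNUED-StED-Equiv} gives a strong ED with data \((P,1,\delta_s',\bar\delta_s',\delta_u',\bar\delta_u')\) in \(\|\cdot\|^{\mathrm{str}}\), and \cref{thm:Holder-HG} with \(D=1\) then yields exactly the stated smallness condition, the constant \(K_\alpha^{\mathrm{str}}\), and the \(\beta\)-range with \(2\delta_0\).

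Your identification worry is unnecessary. The paper simply takes \(\eta^*\) to be the fixed point constructed on the strong Lyapunov bundle, which is your fallback. In fact \(\mathcal R\) is only assumed Lipschitz with respect to \(\|\cdot\|^{\mathrm{str}}\), so the conjugacy of \cref{thm:NU-HG-continuous} need not even be available.
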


\begin{proof}
By \cref{thm:StNUED-StED-Equiv}, the same linear SPF, viewed in the
strong Lyapunov norm, has a strong ED with data
\[
  (P,1,\delta_s',\bar\delta_s',\delta_u',\bar\delta_u').
\]
Thus, \cref{thm:Holder-HG} applies with \(D=1\). Therefore, its smallness
condition becomes
\[
  \delta_0< \frac{\min\{\delta_s'-\alpha\bar\delta_s', \delta_u'-\alpha\bar\delta_u'\}}{4},
\]
and the constant \(K_\alpha\) reduces to
\[
  K_\alpha^{\mathrm{str}}
  =
  \delta_0\left[
  \frac{1}{\delta_s'-\alpha\bar\delta_s'}
  +\frac{1}{\delta_u'-\alpha\bar\delta_u'}
  +\frac{1}{\delta_s'+\alpha\delta_u'}
  +\frac{1}{\delta_u'+\alpha\delta_s'}
  \right].
\]
\end{proof}

\begin{remark}[Original-norm sufficient condition]
\label{rem:NU-weighted-smallness}
Since the Lyapunov norms of \cref{thm:ED-NUED-equiv} and \cref{thm:StNUED-StED-Equiv} satisfy
\[
  \|v\| \leq \|v\|^{\mathrm L}\le 2D N(b)\|v\|,\qquad v\in E_b,
\] and \[
    \|v\| \leq \|v\| ^{\mathrm{L}} \leq 2 D^{2}N(b)^{2}\|v\|, \qquad v \in E_b 
\] 
respectively, a sufficient way to obtain the Lyapunov-norm Lipschitz bounds is the weighted original-norm estimate
\[
  \|\mathcal{R}_b(t,v)-\mathcal{R}_b(t,w)\| \leq \frac{\delta_0}{2D N(b\cdot t)}\min\{1,\|v-w\|\}, \qquad v,w\in E_{b\cdot t}
\] for the standard case and \[
     \|\mathcal{R}_b(t,v)-\mathcal{R}_b(t,w)\| \leq \frac{\delta_0}{2D^{2} N(b\cdot t)^{2}}\min\{1,\|v-w\|\}, \qquad v,w\in E_{b\cdot t}
\] for the Hölder case. 
Indeed, multiplying by the local distortion factor \(2D N(b\cdot t)\) (resp.\ \( 2D^{2}N(b\cdot t)^{2}\)) gives the desired estimate in \(\|\cdot\|^{\mathrm L}\).
\end{remark}

\begin{remark}[Endpoint loss]
\label{rem:NU-Holder-endpoint-loss}
Since the primed rates can be chosen arbitrarily close to
\[
  \delta_s,\qquad \delta_u,\qquad \bar\delta_s+\varepsilon,\qquad \bar\delta_u+\varepsilon,
\]
the limiting forward H\"older range is, up to endpoint loss,
\[
  \alpha< \min \left\{\frac{\delta_s}{\bar\delta_s+\varepsilon}, \frac{\delta_u}{\bar\delta_u+\varepsilon}\right\}.
\]
Similarly, the limiting inverse range is, again up to endpoint loss,
\[
  \beta< \min\left\{\frac{\delta_s}{\bar\delta_s+\varepsilon+2\delta_0},\frac{\delta_u}{\bar\delta_u+\varepsilon+2\delta_0}\right\}.
\]
All constants here are measured in the strong Lyapunov norm.
\end{remark}

\begin{remark}
  By the norm comparison in \cref{thm:StNUED-StED-Equiv}, the
\(\alpha\)-Hölder estimate in the strong Lyapunov norm of \cref{thm:NU-HG-Holder} implies, on each
fiber \(E_b\),
\[
  \|\eta^*(v)-\eta^*(w)\| \leq \frac{K_\alpha^{\mathrm{str}}}{1-K_\alpha^{\mathrm{str}}}\bigl(2D^2N(b)^2\bigr)^\alpha \|v-w\|^\alpha .
\]
Thus, the Hölder exponent is unchanged in the original fiber norm, but the
Hölder constant becomes nonuniform in \(b\). If \(N\) is bounded on a
base subset, the constants are uniform on that subset.
\end{remark}
\section{Applications}\label{sec:App}
\subsection{Control-affine systems on manifolds}\label{subsec:control-affine}
For control systems, the main Hartman--Grobman-type theorem is an abstract theorem by Baratchart, Chyba, and Pomet~\cite[Thm.~2.1]{BaratchartEtAl2007GrobmanHartmana} which allows for a \emph{constant} linear part, given by a finite-dimensional matrix \( A\) and a compatible non-autonomous non-linearity. Our results extend these in several directions. Not only do we allow for linearization along arbitrary solutions, since we allow for the non-autonomy of the linear part, but we can also strengthen the result to manifolds and their inherent non-trivial bundle structure. Furthermore, local and Hölder versions can be obtained in this setting; both are stated in \cref{thm:HG-control}. 

Let \(M\) be a connected \(C^k\)-manifold, \(k \geq 3\), equipped with a \(C^k\)-Riemannian metric. Consider \[
  \dot x=F_0(x)+\sum_{j=1}^m u_j(t)F_j(x),
\]
where \(F_0,\ldots,F_m\in \Gamma^k(TM)\). Let \(U\subset\R^m\) be compact and convex, and set \[
  \mathcal U:=\{u\in L^\infty(\R,\R^m):u(t)\in U\text{ for a.e.\ }t\}.
\]
We equip \(\mathcal U\) with the weak-* topology and write \[
  \theta_tu:=u(t+\cdot).
\]
By Kawan's weak-* continuity results, \(\mathcal U\) is compact metrizable, the shift \(\theta:\R\times\mathcal U\to\mathcal U\) is continuous, and the associated control flow \[
  \Psi_t(u,x):=(\theta_tu,\psi(t,u,x))
\]
is continuous, provided the solutions are globally defined; see~\cite[Props.~1.14, 1.15, 1.17]{Kawan2013InvarianceEntropy}. Moreover, since the vector fields are \(C^k\), the derivative cocycle depends continuously on \((t,u,x)\); see~\cite[Thm.~1.1]{Kawan2013InvarianceEntropy}.

Assume that there is a continuous invariant equilibrium section \[
  \sigma:\mathcal U\to M,\qquad \psi(t,u,\sigma(u))=\sigma(\theta_tu).
\]
We write \[
  \mathcal{E}_\sigma:=\sigma^*TM=\{(u,v):u\in\mathcal U,\ v\in T_{\sigma(u)}M\}
\]
for the pulled-back tangent bundle over \(\mathcal U\).

The intrinsic linearization along \(\sigma\) is the derivative cocycle \[
  \Phi(t,u):=d_x\psi(t,u,\sigma(u)):T_{\sigma(u)}M\to T_{\sigma(\theta_tu)}M.
\]
Equivalently, using the Levi-Civita connection, \(\Phi(t,u)\) is the solution operator of the covariant variational equation along \(p(t):=\sigma(\theta_tu)\), \[
  \nabla_t y(t)=\left(\nabla F_0(p(t))+\sum_{j=1}^m u_j(t)\nabla F_j(p(t))\right)y(t).
\]
Thus, \(\Phi\) is a continuous linear SPF on \(\mathcal{E}_\sigma\) over the shift.

From here on out we will always assume that our manifold has an injectivity radius bounded away from \( 0\) on \( \sigma(\mathcal U)\). This is always the case if \( M\) is compact.

\begin{lemma}[Tubular Carath\'eodory representation]
\label{lem:control-tubular}
Given our assumptions on the control system, there exists \(r_0>0\) such that the fiberwise exponential chart \[
  \mathcal{E}_\sigma(r_0)\ni(u,v)\longmapsto (u,\exp_{\sigma(u)}v)\in\mathcal U\times M
\]
is a homeomorphism onto a neighborhood of \(\operatorname*{graph}\sigma\). In this chart the control flow is represented by
\[
  \widetilde\Psi_t(u,v):=\exp_{\sigma(\theta_tu)}^{-1}\psi(t,u,\exp_{\sigma(u)}v).
\]
The map \(\widetilde\Psi\) is a continuous local skew-product flow on
\(\mathcal{E}_\sigma(r_0)\), and it satisfies \[
  \widetilde\Psi_t(u,v)=\Phi(t,u)v+\int_0^t\Phi(t-s,\theta_su)\widetilde{\mathcal R}_u(s,\widetilde\Psi_s(u,v))\,ds.
\]
Moreover, \[
  \widetilde{\mathcal R}_u(t,w)=R_0(\theta_tu,w)+\sum_{j=1}^m u_j(t)R_j(\theta_tu,w)
\]
for a.e.\ \(t\), where the coefficients \(R_i\) are jointly continuous on the local bundle, fiberwise \(C^{k-1}\), and satisfy \[
  R_i(u,0)=0,\qquad d_wR_i(u,0)=0.
\]
In particular, \[
  L(r):=\sup_i\sup_{u\in\mathcal U}\mathrm{Lip}\bigl(R_i(u,\cdot)|_{(\mathcal{E}_\sigma)_u(2r)}\bigr)\to0\qquad(r\to0).
\]
The source \(\widetilde{\mathcal R}\) satisfies the Carath\'eodory measurability and coherence assumptions of \cref{def:CarPert}.
\end{lemma}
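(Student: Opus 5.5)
The argument has three stages: a chart construction, a pointwise computation of the nonlinear remainder, and an integration step that produces the variation-of-constants formula.

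\emph{Chart.} First I would pick \(r_0\) below the uniform lower bound of the injectivity radius on the compact set \(\sigma(\mathcal U)\). Then \(\exp_{\sigma(u)}\) is a diffeomorphism from \(T_{\sigma(u)}M(r_0)\) onto the geodesic ball around \(\sigma(u)\), with \(C^{k-1}\) dependence on the footpoint. The map \((u,v)\mapsto(u,\exp_{\sigma(u)}v)\) is continuous because \(\sigma\) is continuous and \(\exp\colon TM\to M\) is continuous. Its inverse \((u,x)\mapsto(u,\exp_{\sigma(u)}^{-1}x)\) is continuous on the tube \(\{d(x,\sigma(u))<r_0\}\), using the convergence criterion \cref{lem:conv-criterion} with local sections of \(\sigma^*TM\) obtained from local frames pulled back by \(\sigma\). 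Next, \(\widetilde\Psi\) inherits the flow property and continuity from the control flow \(\Psi\) on \(\mathcal U\times M\) (Kawan) and from the invariance \(\psi(t,u,\sigma(u))=\sigma(\theta_tu)\). Where defined, it is a continuous local SPF covering \(\theta\) that fixes the zero section.

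\emph{Remainder.} For each \(u\) the chart representation is absolutely continuous in \(t\). I would differentiate \(w(t):=\widetilde\Psi_t(u,v)\) along the moving footpoint \(p(t)=\sigma(\theta_tu)\), using the covariant derivative \(\nabla_t\) in the fiber \(T_{p(t)}M\). This gives, for a.e. \(t\),
\[
  \nabla_t w=\widetilde F_0(\theta_tu,w)+\sum_j u_j(t)\widetilde F_j(\theta_tu,w),
\]
where \(\widetilde F_i(u',w)\) is \(F_i\) pushed through \(d\exp^{-1}_{\sigma(u')}\), minus the \(\nabla_t\)-correction from moving the base point. The correction is itself affine in \(u(t)\), because \(\dot p=F_0(p)+\sum u_jF_j(p)\) holds on the invariant section. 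Define \(R_i(u',w):=\widetilde F_i(u',w)-\nabla F_i(\sigma(u'))w\). Since \(w=0\) is a solution, \(R_i(u',0)=0\). Since the linearization of the chart representation at \(0\) is the covariant variational equation (\(d\exp_p\) at \(0\) is the identity), \(d_wR_i(u',0)=0\). These maps are fiberwise \(C^{k-1}\) and jointly continuous, because \(\exp\) and its derivatives are continuous in the footpoint and \(\sigma\) is continuous.

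\emph{Integration.} Pulling back with \(\Phi(-t,u)\) turns the equation into \(\frac{d}{dt}\Phi(-t,u)w(t)=\Phi(-t,u)\widetilde{\mathcal R}_u(t,w(t))\). Integrating yields the variation-of-constants formula, in the same way as in \cref{prop:cutoff-flow}. Coherence \(\widetilde{\mathcal R}_{\theta_s u}(t,\cdot)=\widetilde{\mathcal R}_u(t+s,\cdot)\) is immediate from \((\theta_su)_j(t)=u_j(t+s)\). For measurability, \(t\mapsto\Phi(-t,u)R_i(\theta_tu,y(t))\) is continuous whenever the pulled-back curve is continuous, and strongly measurable whenever it is measurable, since \(R_i\) is continuous. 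Multiplying by the \(L^\infty\) function \(u_j(t)\) preserves measurability.

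\emph{Main obstacle.} The hardest step is the uniform claim \(L(r)\to0\). Here I would use that \(\mathcal U\) is compact and that \(d_wR_i\) is jointly continuous with \(d_wR_i(u,0)=0\). A compactness argument, modeled on \cref{lem:KomGrowth} with sequences \((u_n,w_n)\) with \(w_n\to0_{u}\), gives \(\sup_{u}\sup_{\|w\|\le2r}\|d_wR_i(u,w)\|\to0\). The mean value inequality on the convex fiber balls then bounds the Lipschitz constants. Joint continuity of \(d_wR_i\) as a map on \(\mathcal{E}_\sigma\) is the delicate point, since the bundle is not trivial. I would check it via local frames, using \(k\ge3\) so that the fiberwise second derivatives of \(\exp^{-1}\) are still continuous.
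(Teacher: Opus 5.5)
Your proposal follows the paper's proof essentially step for step (exponential tube around \(\operatorname*{graph}\sigma\), covariant differentiation of \(w(t)=\exp_{p(t)}^{-1}q(t)\) with the affine splitting in \(u(t)\) coming from both \(p\) and \(q\) being controlled trajectories, subtracting \(\nabla F_i(\sigma(u))w\), integrating against \(\Phi\), then coherence and measurability), and your compactness/mean-value argument for the uniformity of \(L(r)\) makes explicit a step the paper only asserts. One justification needs tightening: ``\(w=0\) is a solution'' and ``the linearization is the covariant variational equation'' only give the identities for the control-weighted sum \(\widetilde F_0+\sum_j u_j(t)\widetilde F_j\) along the orbit, so they do not yield \(R_i(u,0)=0\) and \(d_wR_i(u,0)=0\) for each \(i\) separately, which \(L(r)\to0\) needs; get these either from the per-field identities \(D_1^v\log(y,y)=-\id\), \(D_2^v\log(y,y)=\id\) as the paper does, or by running your linearization argument for the flow of each single field \(F_i\).
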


\begin{proof}
Continuity of \(\widetilde\Psi\) follows directly from Kawan's continuity theorem for the control flow, the continuity of the shift, the continuity of \(\sigma\), and the continuity of the exponential and logarithm maps on the chosen tube.

It remains to identify the local equation. Put \[
  p(t):=\sigma(\theta_tu),\qquad q(t):=\psi(t,u,\exp_{\sigma(u)}v),\qquad w(t):=\exp_{p(t)}^{-1}q(t).
\]
The curves \(p\) and \(q\) are controlled trajectories. Differentiating \(w(t)=\exp_{p(t)}^{-1}q(t)\) by the Levi-Civita connection gives, for a.e.\ \(t\),  \[
  \nabla_t w(t)=G_0(\theta_tu,w(t))+\sum_{j=1}^m u_j(t)G_j(\theta_tu,w(t)).
\]
The coefficients \(G_j\) are obtained from the vertical differentials of the logarithm map in the two variables. 

At \(w=0\), the identities \(D_1^v\log(y,y)=-\id\) and \(D_2^v\log(y,y)=\id\) give \(G_j(u,0)=0\). In normal coordinates at \(\sigma(u)\), \[
  G_j(u,w)=\nabla F_j(\sigma(u))w+O(\|w\|^2),
\]
uniformly on the chosen tube. Thus, setting \[
  A_j(u):=\nabla F_j(\sigma(u)),\qquad R_j(u,w):=G_j(u,w)-A_j(u)w,
\]
gives \(R_j(u,0)=0\), \(d_wR_j(u,0)=0\), and the asserted regularity. Integrating the resulting covariant equation against the linear cocycle \(\Phi\) yields the variation-of-constants formula.
Finally, coherence follows from \((\theta_su)_j(t)=u_j(t+s)\), and orbitwise measurability follows from the continuity of the coefficients \(R_j\) and the measurability of the scalar controls \(u_j\).
\end{proof}

\begin{remark}[No pointwise perturbation on the weak-* hull]
\label{rem:no-pointwise-control-source}
The source in \cref{lem:control-tubular} should not be collapsed to a pointwise bundle map \(f:\mathcal{E}_\sigma\to \mathcal{E}_\sigma\). Formally one would have to write \[
  f(u,w)=R_0(u,w)+\sum_{j=1}^m u_j(0)R_j(u,w),
\]
but \(u_j(0)\) is not well-defined on \(L^\infty\)-classes and is not weak-* continuous. Thus, the control application genuinely uses the Carath\'eodory source \[
  \widetilde{\mathcal R}_u(t,w)=R_0(\theta_tu,w)+\sum_{j=1}^m u_j(t)R_j(\theta_tu,w).
\]
\end{remark}

\begin{proposition}[Control-affine sources satisfy the continuity layer]
\label{prop:control-continuity-layer}
Let
\begin{equation}\label{eq:ConAffSource}
  \mathcal S_u(t,w)=S_0(\theta_tu,w)+\sum_{j=1}^m u_j(t)S_j(\theta_tu,w)
\end{equation}
be a control-affine Carath\'eodory source with jointly continuous coefficients
\(S_j\), which vanish on the zero section. Additionally, we assume that \( \mathcal{S}\) satisfies the smallness condition of \cref{thm:HG} and each coefficient \( S_j\) is bounded individually.

Assume that the corresponding nonlinear flow is continuous. Then
\[
  F(\mathcal C)\subseteq\mathcal C,\qquad \zeta\in\mathcal C.
\]
\end{proposition}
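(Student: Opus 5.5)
The plan is to follow the proof of \cref{prop:ContForContBunMap}, replacing the pointwise continuity of \(f\) by a weak-* argument that handles the control coefficients \(u_j(t)\). Fix \(\eta\in\mathcal C\) and put \(H_\eta:=\id+\eta\). Fix \(v_0\in(\mathcal E_\sigma)_{u_0}\) and a continuous local section \(\tau\) through \(v_0\) (\cref{prop:local-sections}). The first step is to show that \(u\mapsto(F\eta)(\tau(u))\) is continuous. The second step, via \cref{lem:conv-criterion}, is to show that \(\|(F\eta)(v_n)-(F\eta)(\tau(u_n))\|\to0\) whenever \(v_n\to v_0\). The tails \(\int_T^\infty\) are uniformly \(O(e^{-\delta T})\) by the dichotomy bound and the boundedness \(\|\mathcal S\|\le\delta_0\), exactly as before. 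So everything reduces to the truncated integrals over \([0,T]\).

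For the truncated stable part I will split the source into its affine pieces:
\begin{align*}
\int_0^T \Phi_\sigma P\,\mathcal S_u(-\sigma,H_\eta(\Phi_{-\sigma}\tau(u)))\,d\sigma
&=\int_0^T g_0(u,\sigma)\,d\sigma\\
&\quad+\sum_{j=1}^m\int_0^T u_j(-\sigma)\,g_j(u,\sigma)\,d\sigma ,
\end{align*}
where \(g_j(u,\sigma):=\Phi_\sigma P\,S_j(\theta_{-\sigma}u,H_\eta(\Phi_{-\sigma}\tau(u)))\). Each \(g_j\) is jointly continuous in \((u,\sigma)\), because \(S_j\), \(H_\eta\), \(\Phi\), \(\theta\), \(\tau\) and \(P\) (along sections) are continuous. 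The \(g_0\) term is therefore handled exactly as in \cref{prop:ContForContBunMap}, via Riemann sums and \cref{lem:uniform-limit}.

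For the \(j\ge1\) terms, take \(u_n\to u_0\) weak-* in the compact neighborhood \(K\). Then
\begin{align*}
\int_0^T u_{n,j}(-\sigma)g_j(u_n,\sigma)\,d\sigma-\int_0^T u_{0,j}(-\sigma)g_j(u_0,\sigma)\,d\sigma
&=\int_0^T u_{n,j}(-\sigma)\bigl[g_j(u_n,\sigma)-g_j(u_0,\sigma)\bigr]d\sigma\\
&\quad+\int_0^T\bigl(u_{n,j}-u_{0,j}\bigr)(-\sigma)\,g_j(u_0,\sigma)\,d\sigma .
\end{align*}
These differences live in different fibers, so they have to be read through a local section, as in \cref{lem:uniform-limit}. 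The first term is small because \(U\) is bounded and \(g_j\) is uniformly continuous on \(K\times[0,T]\), again measured along local sections. For the second term I approximate \(\sigma\mapsto g_j(u_0,\sigma)\) uniformly by step functions whose values are continuous local sections. On each step, weak-* convergence gives \(\int_a^b(u_{n,j}-u_{0,j})\to0\) (test against an indicator in \(L^1\)), so the step-function approximation converges.

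The step I expect to be the main obstacle is making this vector-valued weak-* argument rigorous on a non-trivial bundle. The fibers over \(u_n\) and \(u_0\) differ, so I would first transport everything into a single fiber. Here \(\mathcal E_\sigma=\sigma^*TM\) with \(M\) finite dimensional, so locally (using \(\sigma(\mathcal U)\) compact) I can trivialise the bundle by a continuous local frame. In a frame, \(g_j\) becomes a continuous \(\R^d\)-valued function, and weak-* convergence against \(L^1([0,T],\R^d)\) test functions finishes the argument componentwise.

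The second step, \(\|(F\eta)(v_n)-(F\eta)(\tau(u_n))\|\to0\), is easier. The coefficient \(u_{n,j}\) is bounded by \(\sup_U|\cdot|\), so the integrand difference is dominated by \(C\,e^{-\delta\sigma}\sum_j\|S_j(\cdot,x_n)-S_j(\cdot,y_n)\|\). This tends to \(0\) pointwise by joint continuity of \(S_j\) and \(H_\eta\), so dominated convergence applies, with the individual bounds on the \(S_j\) giving the dominating function.

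For \(\zeta\in\mathcal C\) the argument is the same. I replace \(H_\eta(\Phi_{\mp\sigma}\cdot)\) by \(\Psi_{\mp\sigma}(\cdot)\), whose joint continuity in \((u,v)\), uniformly for \(\sigma\in[0,T]\) on compacts, is exactly the assumed continuity of the nonlinear flow.
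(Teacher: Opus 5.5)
Your proposal is correct and rests on the same key idea as the paper. Split the source into its affine pieces, write each control term as a bounded control times a converging kernel plus a weak-* converging control tested against a fixed $L^1$ kernel, and use the individual bounds on the $S_j$ as the dominating function. The paper does this in one pass along $(u_n,v_n)\to(u_0,v_0)$, with dominated convergence on $[0,\infty)$ in place of your local-section and truncation scheme. In doing so it silently subtracts $\kappa_{0,j}(t)\in E_{u_0}$ from $\kappa_{n,j}(t)\in E_{u_n}$, and your local frame of the finite-rank bundle $\sigma^*TM$ is exactly what makes that step legitimate.
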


\begin{proof}
  Similarly to \cref{prop:ContForContBunMap} we need to show that if \( \eta \in \mathcal{C}\) and \( F\) is built with \( \mathcal{S}_u\), i.e. \begin{align*}
      (F\eta)(u,v) = &\int_{0}^{\infty} \Phi(t,\theta_{-t}u) P(\theta_{-t}u)\mathcal{S}_u(-t,H_{\eta}(\theta_{-t}u,\Phi(-t,u)v)) \, dt \\ 
      &- \int_{0}^{\infty} \Phi(-t,\theta_t u) Q(\theta_t u) \mathcal{S}_u(t,H_\eta(\theta_t u, \Phi(t,u)v)) \, dt,
  \end{align*} 
  then \( (F\eta) \in \mathcal{C}\). So fix \( (u_n,v_n)\to(u_0,v_0)\) in the pulled-back tangent bundle \( E_{\sigma}\).
  Similarly, again, to \cref{prop:ContForContBunMap} we focus on the stable component \( (F\eta)^{s}\). Since \( \mathcal{S}_{u_n}\) is given as a finite sum the integrands of \( (F\eta)^{s}\) are of the form \[
       \kappa_{n,0}(t) + \sum_{j=1}^{m} u_{n,j}(-t)\kappa_{n,j}(t)
  ,\] 
 where the functions \( \kappa_{n,j}\) are given by  
  \[
      \kappa_{n,j}(t) := \Phi(t,\theta_{-t}u_n)P(\theta_{-t}u_n)S_j(\theta_{-t}u_n,H_{\eta}(\theta_{-t}u_n,\Phi(-t,u_n)v_n)).
       \] 
 The pointwise convergence, for fixed \( t \geq 0\), \( \kappa_{n,j}(t) \to \kappa_{0,j}(t)\) follows then directly from the following:
  \begin{itemize}
    \item \( u_n \to u_0\) weak-* in \( \mathcal{U}\).
    \item the shift \( \theta_{-t}\) is continuous in the weak-* sense on \( \mathcal{U}\).
    \item \( \Phi\) is continuous.
    \item \( P\) is continuous along sections. 
    \item \( S_j\) is jointly continuous. 
    \item \( \id + \eta\) is jointly continuous.
  \end{itemize}
  
  To pass from this pointwise convergence of the integrands to convergence of the integral we are using again, as in \cref{prop:ContForContBunMap}, a dominated convergence argument. Now since we assumed individual boundedness of \( S_j\) define \[
      C_j := \sup_{u,w}\|S_j(u,w)\| < \infty  
  ,\] which, using \cref{eq:ED-stable}, gives us \[
      \|\kappa_{n,j}(t)\| \leq DC_j e^{-\delta t} 
  .\] Consequently, \[
      \|\kappa_{n,j}(t)-\kappa_{0,j}(t)\| \leq 2 D C_j e^{-\delta t} 
  ,\] hence the right-hand side is an integrable majorant on \( [0,\infty)\). Dominated convergence now guarantees that \( \kappa_{n,j} \to \kappa_{0,j}\) in \( L^{1}( \R_{+})\). For the \( S_0\)-term this suffices to show \[
      \int_{0}^{\infty } \kappa_{n,0}(t) \, dt \to \int_{0}^{\infty} \kappa_{0,0}(t) \, dt
  .\] 
  To control the other \( S_j\)-terms we understand \[
     \int_{0}^{\infty} u_{n,j}(-s) \kappa_{n,j}(s) \, ds - \int_{0}^{\infty} u_{0,j}(-s)\kappa_{0,j}(s) \, ds
  \] as the sum of \begin{equation} \label{eq:ConContInt1}
    \int_{0}^{\infty} u_{n,j}(-s) (\kappa_{n,j}(s)-\kappa_{0,j}(s)) \, ds
  \end{equation} and 
  \begin{equation}\label{eq:ConContInt2}
    \int_{0}^{\infty} (u_{n,j}(-s)- u_{0,j}(-s))\kappa_{0,j}(s) \, ds
  .\end{equation}
    Now \cref{eq:ConContInt1} tends to \( 0\) in norm, because the controls are uniformly \( L^{\infty}\) bounded and \cref{eq:ConContInt2} tends to \(0\) in norm, because \( \kappa_{0,j}\) is an \( L^{1}\) function and hence by the weak-* convergence of \( u_n\) in \( L^{\infty}\) this converges to \( 0\). This means now finally that the stable integrals converge \[
        (F\eta)^{s}(u_n,v_n) \to (F\eta)^{s}(u_0,v_0)
    \] and an analogous argumentation shows \[
        (F\eta)^{u}(u_n,v_n) \to (F\eta)^{u}(u_0,v_0)
    ,\] which means that \( F\eta\) is jointly continuous. 
    Again, as in \cref{prop:ContForContBunMap}, the statement for \( \zeta\) follows from an analogous argumentation, where the main difference is the replacement of the inner linear orbit \( \Phi_{\pm t}\) by the non-linear orbits \( \Psi_{\pm t}\), which we will omit for brevity's sake.
\end{proof}

\begin{theorem}[Hartman--Grobman theorem for control-affine systems]\label{thm:HG-control}
Assume that the control-affine system above admits a continuous invariant equilibrium section \(\sigma:\mathcal U\to M\). Let \[
  \Phi(t,u):T_{\sigma(u)}M\to T_{\sigma(\theta_tu)}M
\]
be the derivative cocycle along \(\sigma\), and assume that \(\Phi\) admits an exponential dichotomy on \(\mathcal{E}_\sigma\) with data \((P,D,\delta)\). Then, for \(r>0\) sufficiently small, there exists \( \rho >0\) such that \[
  V:=\{(u,v)\in \mathcal{E}_\sigma:\|v\|<\rho\}
\]
is a neighborhood of the zero section, a neighborhood \(W\subset\mathcal U\times M\) of \(\operatorname*{graph}\sigma\), and a bundle-respecting homeomorphism \[
  H:V\to W,\qquad H(u,0_u)=(u,\sigma(u)),
\]
such that \[
  H(\Phi_t(u,v))=\Psi_t(H(u,v))
\]
for every orbit segment for which \(\Phi_s(u,v)\in V\) for all \(s\) between \(0\) and \(t\). Moreover, \(H\) and \(H^{-1}\) are jointly continuous.

If \(\Phi\) has a strong exponential dichotomy and the non-linearity satisfies the stronger assumptions of \cref{thm:Holder-HG}, then the fiber maps of \(H\) and \(H^{-1}\), written in the exponential coordinates of \(\mathcal{E}_\sigma\), are H\"older with the exponents given by \cref{thm:Holder-HG}. If \(\Phi\) has a nonuniform exponential dichotomy, the same statements can be made in the respective adapted Banach bundle structures given in \cref{thm:ED-NUED-equiv} and \cref{thm:StNUED-StED-Equiv}.
\end{theorem}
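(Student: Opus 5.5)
The plan is to reduce \cref{thm:HG-control} to the local Hartman--Grobman theorem \cref{thm:local-HG}, applied on the pulled-back tangent bundle \(\mathcal{E}_\sigma\) over the compact metric base \(\mathcal U\) with the shift flow \(\theta\). Then the result is transported to \(\mathcal U\times M\) through the fiberwise exponential chart of \cref{lem:control-tubular}. First, I invoke \cref{lem:control-tubular} to get \(r_0>0\), the chart \(\exp\colon\mathcal{E}_\sigma(r_0)\to\mathcal U\times M\), and the local representation \(\widetilde\Psi\). This representation satisfies the variation-of-constants formula with the control-affine local Carath\'eodory source \(\widetilde{\mathcal R}_u(t,w)=R_0(\theta_tu,w)+\sum_j u_j(t)R_j(\theta_tu,w)\). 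Since \(U\) is compact, \(|u_j(t)|\le C_U\) a.e., so the Lipschitz modulus of \(\widetilde{\mathcal R}\) on \(E(2r)\) is at most \((1+mC_U)L(r)\to0\). Hence the hypothesis of \cref{thm:local-HG} holds.

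Second, for \(0<2r<r_0\) I form the cutoff source \(\mathcal R^r\) of \cref{lem:cutoff-estimate}. It keeps the control-affine form \eqref{eq:ConAffSource} with coefficients \(S_j^r(u,w):=\chi(\|w\|/r)R_j(u,w)\), extended by zero. These coefficients are jointly continuous (fiber norm continuous by \ref{ax:norm}), vanish on the zero section, and are individually bounded by \(2rL(r)\). I choose \(r\) so small that \(\delta_r<\delta/(2D)\). \cref{prop:cutoff-flow} then gives the global cocycle \(\Psi^r\). I need \(\Psi^r\) to be continuous, because \cref{prop:control-continuity-layer} requires it. For this I would argue that \(\Psi^r\) coincides with the continuous \(\widetilde\Psi\) on the region where the cutoff equals one, and equals the linear flow \(\Phi\) outside \(\mathcal{E}(2r)\). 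More robustly, I would rerun Kawan's continuity argument, which rests on weak-* convergence of controls against \(L^1\) kernels, for the modified vector fields \(F_j\) multiplied by a cutoff in the tube. With continuity in hand, \cref{prop:control-continuity-layer} yields \(F^r(\mathcal C)\subseteq\mathcal C\) and \(\zeta^r\in\mathcal C\). \cref{thm:local-HG} then provides \(H_r=\id+\eta_r\) with \(\eta_r,\zeta^r\in\mathcal C\) and \(\|\eta_r\|_{\mathcal B}\le 4DrL(r)/\delta\). It also provides \(\rho_r=r-\|\eta_r\|_{\mathcal B}>0\) and the conjugacy \(H_r\circ\Phi_t=\widetilde\Psi_t\circ H_r\) along orbit segments that stay in \(\mathcal{E}(\rho_r)\).

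Third, I set \(\rho:=\rho_r\), \(V:=\mathcal{E}_\sigma(\rho)\), and \(H:=\exp\circ H_r|_V\), with \(W:=H(V)\). Since \(\eta_r\) vanishes on the zero section, \(H(u,0_u)=(u,\sigma(u))\). Joint continuity of \(H\) and of its inverse \(H_r^{-1}\circ\exp^{-1}\) follows from \(\eta_r,\zeta^r\in\mathcal C\) and from the chart being a homeomorphism. Because \(\|H_r(v)\|<r<r_0\), \(H\) indeed lands in the tube. To see that \(W\) is a neighbourhood of \(\operatorname{graph}\sigma\), I would note that \(H_r\) is a bundle homeomorphism of \(\mathcal{E}_\sigma\) fixing the zero section. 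Then \(H_r(V)\supseteq H_r(\mathcal{E}(\rho))\) is open, because \(H_r^{-1}\) is continuous and \(H_r(V)=(H_r^{-1})^{-1}(V)\). The image under \(\exp\) of an open set containing the zero section is an open neighbourhood of the graph. The conjugacy identity \(H\circ\Phi_t=\Psi_t\circ H\) is then the identity from \cref{thm:local-HG}, pushed through the chart definition \(\widetilde\Psi_t=\exp^{-1}\circ\Psi_t\circ\exp\).

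For the Hölder claim, I apply the Hölder part of \cref{thm:local-HG}, that is \cref{thm:Holder-HG} to the cutoff system, which gives fiberwise Hölder continuity in exponential coordinates with the stated exponents. For the nonuniform claims, I replace the bundle norm by the Lyapunov norms of \cref{thm:ED-NUED-equiv} and \cref{thm:StNUED-StED-Equiv} and repeat the argument. This is legitimate since these norms induce the same topology, and \(N\) is bounded on the compact base \(\mathcal U\), so the norms are even uniformly equivalent. I expect the main obstacle to be the continuity of the cutoff flow \(\Psi^r\), which \cref{prop:control-continuity-layer} assumes. Continuity of \(\widetilde\Psi\) alone does not cover trajectories that cross the cutoff annulus. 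I would first try to realize \(\Psi^r\) as the exponential-chart representation of a genuine control-affine system on \(M\) with cut-off \(C^k\) vector fields, so that Kawan's continuity results apply directly.
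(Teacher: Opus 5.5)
Your proposal is correct and follows the paper's proof essentially step for step: the tubular representation from \cref{lem:control-tubular}, the control-affine cutoff with coefficients \(\chi(\|w\|/r)R_j\), continuity of the cutoff flow via Kawan, \cref{prop:control-continuity-layer} for the continuity layer, the bound \(\|\eta_r\|_{\mathcal B}=O(rL(r))\) giving \(\rho=r-\|\eta_r\|_{\mathcal B}>0\), and transport through the exponential chart. The differences are minor: you use \cref{thm:local-HG} as a black box where the paper re-derives its restriction step, and you justify the openness of \(W\) and the uniform equivalence of the Lyapunov norms on the compact base, which the paper leaves implicit.

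One correction concerns your preferred route to continuity of \(\Psi^r\). The cutoff is centred at \(\sigma(\theta_tu)\), which is not a function of the current state, so the cut-off system is not a control-affine system on \(M\) itself. It is, however, one on a tube in \(TM\) with state \((p,w)\) and \(\dot p=F_0(p)+\sum_j u_jF_j(p)\), because the coefficients \(R_j(u,w)\) depend on \(u\) only through \(p=\sigma(u)\); Kawan's results apply there if you take \(\chi\) smooth, as the paper does. Alternatively, use your fallback of rerunning Kawan's weak-* pairing argument, which is what the paper asserts.
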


\begin{proof}
By \cref{lem:control-tubular}, the control flow near \(\operatorname*{graph}\sigma\)
is represented on \(\mathcal{E}_\sigma\) by a continuous local bundle flow \(\widetilde\Psi\)
satisfying the Carath\'eodory variation-of-constants formula with source\[
  \widetilde{\mathcal R}_u(t,w)=R_0(\theta_tu,w)+\sum_{j=1}^m u_j(t)R_j(\theta_tu,w).
\]
Since the coefficients vanish to second order at the zero section,\[
  L(r):=\sup_i\sup_{u\in\mathcal U}\mathrm{Lip}\bigl(R_i(u,\cdot)|_{(\mathcal{E}_\sigma)_u(2r)}\bigr)\to 0.
\]
Choose a \( C^{\infty}\) cutoff \(\chi\colon[0,\infty)\to[0,1]\), which we need since we are actually differentiating, with \(\chi=1\) on \([0,1]\) and
\(\chi=0\) on \([2,\infty)\), and define \[
  R_i^r(u,w):=\chi(\|w\|/r)R_i(u,w).
\]
Then \[
  \mathcal R_u^r(t,w):=R_0^r(\theta_tu,w)+\sum_{j=1}^m u_j(t)R_j^r(\theta_tu,w)
\]
is a global control-affine Carath\'eodory source, with Lipschitz constant \[
  \delta_r\to0\qquad(r\to0).
\]
For \(r>0\) small enough, \[
  \delta_r<\frac{\delta}{2D}.
\]

The cutoff system is finite-dimensional in the fibers and globally bounded and
Lipschitz after cutoff, hence it admits a globally defined cutoff flow. Its
continuity again follows from Kawan's weak-* continuity theorem, since the
cutoff source remains control-affine with jointly continuous coefficients. Applying
the global Hartman--Grobman theorem to the cutoff system gives
\(\eta_r\in\mathcal B\) and \[
  H_0:=\id+\eta_r
\]
such that \[
  H_0(\Phi_t(u,v))=\widetilde\Psi_t^r(H_0(u,v)).
\]
Moreover, since \(R_i(u,0)=0\) and \(0\le\chi\le1\), \(\|R_i^r(u,w)\|\le 2rL(r)\) for all \(i\), \(u\), \(w\), so the cutoff source is bounded by \(c\,rL(r)\) with \(c:=2\bigl(1+m\max_{v\in U}|v|_\infty\bigr)\). As in the proof of \cref{thm:local-HG}, the estimate in the proof of \cref{lem:F-welldef} gives \[
  \|\eta_r\|_{\mathcal B}\le\frac{2Dc\,rL(r)}{\delta}
  \qquad\text{so}\qquad \frac{\|\eta_r\|_{\mathcal B}}{r}\to0\qquad(r\to0),
\]
and hence \[
  \rho:=r-\|\eta_r\|_{\mathcal B}>0
\]
for \(r\) sufficiently small.

Since the cutoff source is still control-affine, \cref{prop:control-continuity-layer}
gives \[
  F^r(\mathcal C)\subseteq\mathcal C,\qquad \zeta^r\in\mathcal C.
\]
Thus, \(H_0\) and \(H_0^{-1}\) are continuous bundle maps.

Restrict to \[
  V:=\{(u,v):\|v\|<\rho\}.
\]
If \(\Phi_s(u,v)\in V\), then \[
  \|H_0(\Phi_s(u,v))\|\leq \|\Phi_s(u,v)\|+\|\eta_r\|_{\mathcal B}<r.
\]
On \(\mathcal{E}_\sigma(r)\) the cutoff is identically one, so the cutoff tubular flow and
the original tubular flow coincide along the considered orbit segment. Hence, \[
  H_0(\Phi_t(u,v))=\widetilde\Psi_t(H_0(u,v)).
\]

Finally transport the conjugacy back to the manifold: \[
  H(u,v):=(u,\exp_{\sigma(u)}(H_0(u,v))).
\]
Define the open set \[
  W:=H(V).
\]
The exponential chart is a homeomorphism on the chosen tube, and \(H_0\) is a
fiberwise homeomorphism with continuous inverse. Hence, \(H:V\to W\) is a
bundle-respecting homeomorphism with continuous inverse. The definition of
\(\widetilde\Psi\) gives \[
  H(\Phi_t(u,v))=\Psi_t(H(u,v))
\]
along every orbit segment staying in \(V\). The strong ED and NUED assertions
follow by applying the H\"older and Lyapunov-bundle versions before transporting
by the exponential chart.
\end{proof}

\begin{corollary}[Case of Euclidean space]
\label{cor:control-flat}
Let \(M=\R^n\), assume \(F_i(0)=0\) for \(i\in \{0,1,\ldots,m\} \), and take \(\sigma =0\). Then \[
  \mathcal{E}_\sigma=\mathcal U\times\R^n,
\]
and the connection is the directional derivative. The derivative cocycle is the usual
bilinear Carath\'eodory system \[
  \dot y=\left(A_0+\sum_{j=1}^m u_j(t)A_j\right)y,\qquad A_j:=dF_j(0).
\]
The tubular source reduces to \[
  \widetilde{\mathcal R}_u(t,w)=R_0(w)+\sum_{j=1}^m u_j(t)R_j(w),
\]
where \[
  R_j(w):=F_j(w)-F_j(0)-A_jw.
\]
If this bilinear cocycle admits an exponential dichotomy uniformly over \(\mathcal U\), then the nonlinear control flow is locally fiberwise conjugate to its bilinearization.
\end{corollary}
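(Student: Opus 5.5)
The corollary should follow by specializing \cref{thm:HG-control} to the flat case, so the work is mostly identification. First I will identify the geometric data. For $M=\R^n$ with the Euclidean metric, the exponential map is $\exp_x v = x+v$, its inverse is $\exp_x^{-1}y = y-x$, and the injectivity radius is infinite. The standing assumption on the injectivity radius therefore holds, and the tubular chart of \cref{lem:control-tubular} is global. Since $F_i(0)=0$ for all $i$, the constant section $\sigma\equiv 0$ is an invariant equilibrium section: $\psi(t,u,0)=0$ for all $t$ and $u$, by uniqueness of Carath\'eodory solutions. It is trivially continuous on $\mathcal U$. Hence $\sigma^*T\R^n=\mathcal U\times\R^n$ is the trivial bundle with constant fiber norm. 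The Levi-Civita connection is the ordinary derivative, so the covariant variational equation is exactly $\dot y=(A_0+\sum_j u_j(t)A_j)y$ with $A_j=dF_j(0)$.

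Next I will compute the tubular source directly, without the normal-coordinate argument. Here $w(t)=\psi(t,u,v)-0$, so $\dot w = F_0(w)+\sum_j u_j(t)F_j(w)$. Subtracting the linear part gives $\dot w=(A_0+\sum_j u_jA_j)w+R_0(w)+\sum_j u_j(t)R_j(w)$ with $R_j(w)=F_j(w)-F_j(0)-A_jw$. Each $R_j$ is $C^k$, vanishes at $0$, has vanishing derivative at $0$, and is independent of the base point $u$. Uniform continuity of $dF_j$ on compact balls then gives $L(r)=\sup_j \mathrm{Lip}(R_j|_{B_{2r}})\to 0$. Variation of constants against the bilinear cocycle $\Phi$ yields the formula of \cref{lem:control-tubular} with the stated $\widetilde{\mathcal R}$. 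Coherence is immediate from $(\theta_su)_j(t)=u_j(t+s)$, and measurability follows from continuity of $R_j$ and measurability of $u_j$.

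Finally I will invoke \cref{thm:HG-control} with the assumed exponential dichotomy, uniform over $\mathcal U$, to obtain $\rho>0$ and a homeomorphism $H\colon V\to W$ with $H(u,v)=(u,H_0(u,v))$ conjugating $\Phi$ to the control flow along orbit segments in $V$. This is the asserted local fiberwise conjugacy. The one point needing care is global existence of solutions, which Kawan's continuity results and \cref{prop:control-continuity-layer} require. I expect this to be the main, though mild, obstacle, and I will handle it as in the proof of \cref{thm:HG-control}. After the smooth cutoff $\chi(\|w\|/r)$, the cutoff vector fields are globally Lipschitz and bounded, so the cutoff flow is global and continuous. Each cutoff coefficient $R_j^r$ is individually bounded by $2rL(r)$, which is the hypothesis needed in \cref{prop:control-continuity-layer}. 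The conjugacy concerns only orbit segments inside the tube, so no global existence of the original flow is needed.
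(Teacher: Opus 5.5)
Your proposal is correct and follows the paper's route. The paper gives no separate proof: the corollary is a direct specialization of \cref{thm:HG-control}, using $\exp_x v = x+v$, the invariance of $\sigma\equiv 0$ (since $F_i(0)=0$), and a direct computation of the tubular source. Your extra remark is consistent with the cutoff argument in the proof of \cref{thm:HG-control}: you note that global existence is needed only for the globally Lipschitz cutoff system, while the original flow is used only along orbit segments inside the tube.
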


\begin{remark}[Why affineness in the control is used]
\label{rem:control-affineness}
The weak-* continuity argument uses affineness in the present control value exactly in the pairing step. If nonlinear expressions such as \(u(t)^2\) occur, weak-* convergence does not preserve them: rapidly oscillating controls may satisfy \(u_n\rightharpoonup^\ast0\) while \(u_n^2\equiv1\). Thus, nonlinear dependence on the present control value cannot be treated on the weak-* \(L^\infty\)-hull by this argument.
\end{remark}

In contrast to the result of Baratchart, Chyba, and Pomet~\cite{BaratchartEtAl2007GrobmanHartmana}, our theorem is explicitly built to allow for non-autonomous linear parts. This ultimately allows us to linearize along arbitrary solution curves, which inherently introduces non-autonomy.

\subsection{Recovering classical results}

We want to round out this exposition by showing how to recover previous Hartman--Grobman results from our setting and illustrate how our approach further simplifies those. 

As a prime example we show how to recover the Hartman--Grobman type result obtained by Aulbach and Wanner~\cite{AulbachWanner2000HartmanGrobman}. 

Assume we are given a \textit{Carath\'eodory} differential equation on a Banach space \( X\), i.e. \begin{equation}\label{eq:NAODE}
    \dot{x} = A(t)x + f(t,x)
,\end{equation} where \( A : \R \to \mathcal{L}(X)\) is locally integrable and \( f : \R \times X \to X\) is \textit{Carath\'eodory} in the sense that for each \( t \in \R\) the map \( x \mapsto f(t,x)\) is continuous and for \( x \in X\) the map \( t \mapsto f(t,x)\) is strongly measurable. Denote by \(\widetilde{\Phi}\) (resp.\ \( \widetilde{\Psi}\)) the solution operators of the linear (resp.\ non-linear)  system. 

\begin{remark}
\label{rem:generators}
No hypothesis of \cref{sec:framework,sec:ED,sec:HGT} refers to a generator; all are formulated for the propagator \(\Phi\). For \(A\in L^1_{\mathrm{loc}}(\R,\mathcal L(X))\) the evolution operator satisfies \(\|\widetilde\Phi(t,s)\|\leq\exp\bigl|\int_s^t\|A(\sigma)\|\,d\sigma\bigr|\), each \(\widetilde\Phi(t,s)\) is invertible with inverse \(\widetilde\Phi(s,t)\), and \((t,s,x)\mapsto\widetilde\Phi(t,s)x\) is continuous; hence \(\Phi\) as defined below is a continuous linear skew-product flow in the sense of \cref{def:linearSPF}, and neither continuity nor differentiability of \(t\mapsto A(t)\) is used anywhere.
\end{remark}

Furthermore, assume the linear system admits an exponential dichotomy with a continuous splitting \( P=1-Q\) and dichotomy estimates \begin{align}
 \label{eq:ODE-ExpDich1} \| P_t\widetilde{\Phi}(t,s)\| &\leq K e^{-\delta(t-s)}   & (t \geq s),\\
 \label{eq:ODE-ExpDich2} \| Q_t\widetilde{\Phi}(t,s) \| &\leq K e^{\delta(t-s)} & (t \leq s),
\end{align} 
for constants \( K \geq 1\), \( \delta>0\).

In order to apply our result we have to translate the setting to the language of Banach bundles. This construction is well-known and part of the original motivation to study skew-product flows, see~\cite{SackerSell1978SpectralTheory}. Given \( \widetilde{\Phi} : \R \times \R \times X \to X\) and \( \widetilde{\Psi} : \R \times \R \times X \to X\), the respective linear and non-linear evolution operators associated to \cref{eq:NAODE}, and \( p : \R \times  X \to \R\), the trivial Banach bundle with fiber \( X\) over \( \R\), we now associate to \cref{eq:NAODE} the skew-product given by \[
    \theta: \R \times \R \to \R, \quad (t,s) \mapsto t+s 
\] and \( \Phi: \R \times \R \times X \to \R \times X\) given by \[
    \Phi(t,s,x) := (\theta_t s, \widetilde{\Phi} (t+s,s,x))
\] and \( \Psi : \R \times \R \times X \to \R \times  X\) by \[
    \Psi(t,s,x) := (\theta_t s, \widetilde{\Psi} (t+s,s,x))
.\] And by the solution properties of \( \widetilde{\Phi} \) and \( \widetilde{\Psi} \) they form skew-product flows on \( \R \times X \to \R\) over \( \theta\). In fact, this skew-product flow admits an exponential dichotomy in the sense of \cref{def:ED} with data \( (P,K,\delta)\). This construction now recovers results from~\cite{AulbachWanner2000HartmanGrobman}.

\begin{theorem}\label{thm:ClassicHG}
Let \(\widetilde\Phi\), as defined above, admit an exponential dichotomy with data \((P,K,\delta)\). Let \(f:\R\times X\to X\) be of Carath\'eodory type with \(f(t,0)=0\) and \begin{align}
  \|f(t,x)\| &\leq M, \\
  \|f(t,x)-f(t,y)\| & \leq L\|x-y\|  
\end{align}
for all \(t\in\R\) and \(x,y\in X\). If \(L<\delta/(2K)\), then \(\dot x=A(t)x+f(t,x)\) is topologically equivalent to \(\dot x=A(t)x\) in the sense of \cref{thm:HG}, by a continuous family of homeomorphisms \(H_s=\id+\eta_s\) with \(\sup_{s,x}\|\eta_s(x)\|<\infty\).
\end{theorem}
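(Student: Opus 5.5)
The plan is to reduce \cref{thm:ClassicHG} to \cref{thm:HG} on the trivial bundle \(\EE=\R\times X\) over the base \(B=\R\), with translation flow \(\theta_t s=s+t\). The skew-product flows \(\Phi\) and \(\Psi\) are the ones constructed above from \(\widetilde\Phi\) and \(\widetilde\Psi\).

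\textbf{Bundle and dichotomy.} The fibers are \(E_s=\{s\}\times X\) with the norm of \(X\). By \cref{rem:generators}, \(\Phi\) is a continuous linear SPF. I would take the projection family \(P(s)=P_s\). It is invariant because the dichotomy splitting is invariant, and it is continuous along sections: a continuous section is \(s\mapsto(s,x(s))\), and \(s\mapsto P_s x(s)\) is continuous by the assumed continuity of the splitting, which I read in norm. The estimates \eqref{eq:ODE-ExpDich1}--\eqref{eq:ODE-ExpDich2}, rewritten with \(t\mapsto t+s\), give \eqref{eq:ED-stable} and \eqref{eq:ED-unstable} with \(D=K\), using invariance \(\widetilde\Phi(t,s)P_s=P_t\widetilde\Phi(t,s)\).

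\textbf{Perturbation.} I would set \(\mathcal R_s(t,x):=f(s+t,x)\) on \(E_{s+t}\). Coherence \(\mathcal R_{s+r}(t,\cdot)=\mathcal R_s(t+r,\cdot)\) holds by definition, and the zero-section condition follows from \(f(t,0)=0\).

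The Lipschitz axiom of \cref{def:CarPert} needs more care. As stated it asks for \(\delta_0\min\{1,\|x-y\|\}\). With \(\delta_0:=\max\{L,2M\}\), the two hypotheses give \(\|f(t,x)-f(t,y)\|\le\min\{L\|x-y\|,2M\}\le\delta_0\min\{1,\|x-y\|\}\). However, this \(\delta_0\) can exceed \(L\), and then the condition \(\delta_0<\delta/(2K)\) would fail.

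I expect this to be the main obstacle, and I would avoid it by going back to \cref{lem:F-contraction}. That proof only uses the bound \(\|\mathcal R(t,x)-\mathcal R(t,y)\|\le\delta_0\|x-y\|\) applied with \(x-y=\eta-\xi\), so contraction holds with constant \(2KL/\delta<1\). \cref{lem:F-welldef} uses only the sup bound \(M\). The inverse argument in \cref{thm:HG} yields \(A\le (2KL/\delta)A\), which still forces \(A=0\) because \(A<\infty\). I would therefore state that the proofs run verbatim with Lipschitz constant \(L\) and bound \(M\).

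For measurability: if \(t\mapsto\widetilde\Phi(s,s+t)y(t)\) is strongly measurable, then so is \(y(t)\), since \(\widetilde\Phi\) is strongly continuous and invertible. Then \(t\mapsto f(s+t,y(t))\) is strongly measurable, because a Carath\'eodory function composed with a measurable curve is measurable (approximate \(y\) by simple functions and use continuity in \(x\)). Pulling back by the continuous \(\widetilde\Phi(s,s+t)\) preserves measurability. Finally, \(\Psi\) satisfies the variation-of-constants formula \eqref{eq:voc}, because \(\widetilde\Psi\) is the mild solution of \eqref{eq:NAODE}. Uniqueness in \cref{prop:cutoff-flow} identifies it with the cocycle of \cref{rem:StanAss}.

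\textbf{Continuity and conclusion.} \cref{thm:HG} then gives \(\eta^*\in\mathcal B\) with \(H\circ\Phi_t=\Psi_t\circ H\). Writing \(H(s,x)=(s,x+\eta_s(x))\), this is the identity \(H_{t+s}\circ\widetilde\Phi(t+s,s)=\widetilde\Psi(t+s,s)\circ H_s\), with \(\sup_{s,x}\|\eta_s(x)\|\le 2KM/\delta\). Each \(H_s\) is a homeomorphism with inverse \(\id+\zeta_s\).

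For the \emph{continuous family} claim I need \(F(\mathcal C)\subseteq\mathcal C\) and \(\zeta\in\mathcal C\). \cref{prop:ContForContBunMap} does not apply, since \(f\) is only measurable in \(t\). Instead I would argue as in \cref{prop:control-continuity-layer}. After the substitution \(u=s-\sigma\), the stable part of \(F\eta\) becomes \(\int_{-\infty}^s\widetilde\Phi(s,u)P_u f(u,H_\eta(u,\widetilde\Phi(u,s)x))\,du\). Here the time argument of \(f\) is the integration variable, so for \((s_n,x_n)\to(s,x)\) the integrands converge pointwise in \(u\) by continuity of \(\widetilde\Phi\), \(P\), \(\eta\) and \(f(u,\cdot)\). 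They are dominated by \(KMe^{-\delta|s_n-u|}\), which is uniformly integrable on compact \(s\)-ranges, and the moving endpoint contributes \(O(|s_n-s|)\). Dominated convergence then gives joint continuity. The same argument with \(\Psi\) in place of \(\Phi\), using continuity of \(\widetilde\Psi\), gives \(\zeta\in\mathcal C\).
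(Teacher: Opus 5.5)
Your proposal is correct, but at the point you identify as the main obstacle it takes a different route from the paper. The paper does not reopen any proofs. It fixes $\delta_0$ with $L<\delta_0<\delta/(2K)$ and rescales the norm of $X$ to $\|\cdot\|_\lambda:=\lambda\|\cdot\|$ with $2\lambda M\le\delta_0$. A scalar renorming leaves the Lipschitz constant $L$ and the dichotomy data $(P,K,\delta)$ unchanged, but shrinks the sup bound to $\lambda M$. Hence $\|R_s(t,x)-R_s(t,y)\|_\lambda\le\delta_0\min\{1,\|x-y\|_\lambda\}$, and \cref{thm:HG} applies verbatim as a black box.

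You instead audit \cref{lem:F-welldef}, \cref{lem:F-contraction}, \cref{lem:fixedpoint-conjugacy} and the inverse step of \cref{thm:HG}. You observe that the bound $M$ and the Lipschitz constant $L$ enter separately, with $A\le (2KL/\delta)A$ and $A<\infty$ closing the inverse argument. This is valid and makes explicit that the $\min\{1,\cdot\}$ normalization in \cref{def:CarPert} is only bookkeeping. However, it requires re-checking every lemma, which the scaling trick avoids in one line.

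For the continuity layer, you substitute $u=s-\sigma$, which moves the merely measurable time argument of $f$ into the integration variable. Pointwise convergence then only needs continuity of $f(u,\cdot)$, $\widetilde\Phi$, $P$ and $\eta$ (resp.\ $\widetilde\Psi$). This is more explicit than the paper's brief appeal to continuity of translations in $L^1_{\mathrm{loc}}$. Both routes end in the same dominated-convergence argument.
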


\begin{proof}
  By the above construction, we have that \( \Phi\) is a continuous linear skew-product flow admitting an exponential dichotomy \( (P,K,\delta)\) and \( R_s(t,x) := f(s+t,x)\) is a Carath\'eodory perturbation. Choose now \( \delta_0\) with \( L < \delta_0 < \frac{\delta}{2K}\) and renorm \( X\) with \( \lambda \|\cdot \| =: \|\cdot \|_{\lambda}  \) with \( \lambda>0\) so small that \( 2 \lambda M \leq \delta_0\). In this new norm, which leaves the dichotomy constants unaffected, we have \[
      \|R_s(t,x)-R_s(t,y)\|_{\lambda} \leq \delta_0 \min \{1, \|x-y\|_{\lambda}\}  
  .\] 
  Finally, we need to check that the additional continuity assumption \( F(\mathcal{C}) \subset \mathcal{C}\) of \cref{thm:HG} is satisfied. Since sections are now simply functions \( \eta: \R \to X\), the proof simplifies substantially from the general case. 
  For \(\eta\in\mathcal C\), the Green operator defining \(F\eta\) has integrands depending continuously on the linear evolution, on \(\eta\), and on the phase variable.  The only merely measurable dependence is the translated Carathéodory time dependence \(t\mapsto f(s+t,\cdot)\). The global boundedness and Lipschitz constant of the perturbation, the continuity of translations in \( L^1_{\mathrm{loc}}\) and the exponential dichotomy estimate give an integrable dominating function of the integrals occurring in the fixed-point operator. An application of the dominated convergence theorem, similar to \cref{prop:ContForContBunMap,prop:control-continuity-layer}, gives the joint continuity of \[
    (s,x) \mapsto (F \eta)_s(x)
.\] Identical considerations lead to the continuity of the inverse conjugacy. Thus, we recover the continuously varying conjugacies.
  Hence, \cref{thm:HG} applies in its generality.  
\end{proof}

\begin{remark}[Projected variant]
\label{rem:projected-classical-variant}
The proof of \cref{thm:HG} only uses the perturbation through the projected terms \(PR\) and \(QR\) in the two Green integrals. Hence, in the classical trivial-bundle setting, the same proof remains valid if the full min-bound is replaced by \[
  \|P(t)(f(t,x)-f(t,y))\|, \ \|Q(t)(f(t,x)-f(t,y))\| \leq \delta_0\min\{1,\|x-y\|\}.
\]
This projected version will be used in \cref{cor:AW}. Whereas Aulbach--Wanner are working on a system with a decoupled linear part \( A = \mathrm{diag}(A_1,A_2)\) with a constant projection, we need no such assumptions, but are able to recover their results.
\end{remark}

\begin{corollary}\label{cor:AW}
\cite[Theorem~3.1]{AulbachWanner2000HartmanGrobman} follows from the projected variant in \cref{rem:projected-classical-variant}. Precisely, let \(X=X_1\oplus X_2\) carry the \(\ell^1\)-norm, let \(A=\operatorname{diag}(A_1,A_2)\) satisfy hypothesis \textup{(A1)} of~\cite{AulbachWanner2000HartmanGrobman} with \(K\ge1\) and \(\alpha<0<\beta\), and let \(f=(f_1,f_2)\) satisfy hypothesis \textup{(A2)} of~\cite{AulbachWanner2000HartmanGrobman} with constants \(L\) and \(M\). Put \[
  P(t)(x_1,x_2):=(x_1,0), \qquad Q(t)(x_1,x_2):=(0,x_2),
\]
and \[
  \gamma:=\min\{-\alpha,\beta\}.
\]
Then the associated linear skew-product flow has an exponential dichotomy with data \((P,K,\delta)\) for every \[
  0<\delta<\gamma.
\]
Moreover, \textup{(A2)} gives the projected Lipschitz estimates \[
  \|P(t)(f(t,x)-f(t,y))\|_1\le L\|x-y\|_1,
\]
and \[
  \|Q(t)(f(t,x)-f(t,y))\|_1\le L\|x-y\|_1.
\]
Hence, the projected Hartman--Grobman theorem, outlined in \cref{rem:projected-classical-variant}, applies whenever \[
  L<\frac{\gamma}{2K},
\]
which is implied by and improves the Aulbach--Wanner smallness condition \[
  0\leq L< \frac{\delta_{\mathrm{AW}}}{2K^2} \bigl(K+2-\sqrt{K^2+4}\bigr), \qquad \alpha<-\delta_{\mathrm{AW}}<0<\delta_{\mathrm{AW}}<\beta .
\]
\end{corollary}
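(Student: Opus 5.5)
The plan is to reduce the statement to the projected variant of \cref{thm:HG} (\cref{rem:projected-classical-variant}) on the trivial bundle \(\R\times X\). To do so I will check three things in turn: the exponential dichotomy, the projected Lipschitz bounds, and the comparison of smallness conditions.

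\emph{Dichotomy.} Hypothesis (A1) of \cite{AulbachWanner2000HartmanGrobman} gives, for the decoupled evolution operators \(\widetilde\Phi_1,\widetilde\Phi_2\) of \(\dot x_i=A_i(t)x_i\), the estimates \(\|\widetilde\Phi_1(t,s)\|\le Ke^{\alpha(t-s)}\) for \(t\ge s\) and \(\|\widetilde\Phi_2(t,s)\|\le Ke^{\beta(t-s)}\) for \(t\le s\). Since \(A\) is block diagonal, \(\widetilde\Phi=\operatorname{diag}(\widetilde\Phi_1,\widetilde\Phi_2)\). The constant coordinate projections \(P,Q\) therefore commute with \(\widetilde\Phi\), which gives invariance. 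They are trivially continuous along sections, since they are constant on a trivial bundle.

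In the \(\ell^1\)-norm we have \(\|Pv\|_1=\|x_1\|\le\|v\|_1\). Hence \(\|\Phi_tPv\|_1\le Ke^{\alpha t}\|v\|_1\le Ke^{-\delta t}\|v\|_1\) for \(t\ge0\) and every \(\delta\le-\alpha\). The unstable estimate is analogous, using \(\beta\). This yields an ED with data \((P,K,\delta)\) for every \(\delta\le\gamma\), and in particular for every \(0<\delta<\gamma\). The point of the \(\ell^1\)-norm is exactly that the projections have norm \(1\), so no extra factor of \(K\) appears.

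\emph{Projected Lipschitz bounds.} (A2) provides the componentwise estimates \(\|f_i(t,x)-f_i(t,y)\|\le L\|x-y\|_1\), together with a global bound \(\|f\|\le M\). These are precisely \(\|P(f(t,x)-f(t,y))\|_1\le L\|x-y\|_1\) and the corresponding estimate for \(Q\).

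To obtain the min-form \(\delta_0\min\{1,\|x-y\|\}\) required in \cref{rem:projected-classical-variant}, I will repeat the rescaling trick from the proof of \cref{thm:ClassicHG}. Pick \(\delta<\gamma\) and \(\delta_0\) with \(L<\delta_0<\delta/(2K)\); this is possible whenever \(L<\gamma/(2K)\), by taking \(\delta\) close to \(\gamma\). Then replace \(\|\cdot\|_1\) by \(\lambda\|\cdot\|_1\) with \(2\lambda M\le\delta_0\). Rescaling leaves the Lipschitz constant and the dichotomy data unchanged, and it bounds differences by \(\delta_0\). With this, \cref{thm:ClassicHG} applies in its projected form, including the continuity layer, and gives conjugacies \(H_s=\id+\eta_s\) with bounded \(\eta\), which is the content of \cite[Theorem~3.1]{AulbachWanner2000HartmanGrobman}.

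\emph{Comparison of smallness conditions.} Finally, take any admissible \(\delta_{\mathrm{AW}}\). Since \(\alpha<-\delta_{\mathrm{AW}}\) and \(\delta_{\mathrm{AW}}<\beta\), we get \(\delta_{\mathrm{AW}}<\gamma\). It then suffices to show \(\frac{K+2-\sqrt{K^2+4}}{K^2}\le\frac1K\), that is, \(2\le\sqrt{K^2+4}\), which holds trivially. Thus the Aulbach--Wanner bound implies \(L<\gamma/(2K)\).

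The main obstacle I expect is bookkeeping rather than mathematics. One has to match the exact form of (A1)/(A2) in \cite{AulbachWanner2000HartmanGrobman}, in particular whether their Lipschitz condition is stated componentwise in the \(\ell^1\)-norm. One also has to make sure the projected variant really only uses the \(P\)- and \(Q\)-components in the contraction estimate, in the bound \(\|F\eta\|_{\mathcal B}\), and in the inverse argument \(A\le q\min\{1,A\}\). Each of these occurrences should be checked once in the proof of \cref{thm:HG}.
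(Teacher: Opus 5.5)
Your reduction is the paper's: choose $\delta\in(0,\gamma)$ with $2KL<\delta$, then $\delta_0\in(L,\delta/(2K))$, rescale to $\|\cdot\|_\lambda=\lambda\|\cdot\|_1$ with $2\lambda M\le\delta_0$, and invoke the projected variant. You also write out two points the paper only asserts, and both are correct. The first is the dichotomy check: the coordinate projections have norm one in $\ell^1$, so the constant stays $K$. The second is the comparison showing $\delta_{\mathrm{AW}}<\gamma$ and $(K+2-\sqrt{K^2+4})/K^2\le 1/K$.

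The gap is your sentence claiming that a conjugacy $H_s=\id+\eta_s$ with bounded $\eta$ ``is the content of'' \cite[Theorem~3.1]{AulbachWanner2000HartmanGrobman}. That is only part of the theorem. The second half of the paper's proof derives the remaining conclusions, and your proposal does not address them.

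First, the conjugacy must be $\omega$-periodic when the coefficients are $\omega$-periodic, and time-independent in the autonomous case. The paper obtains this by rerunning the construction over the base $\R/\omega\Z$, respectively over a one-point base. Alternatively, the time-translate of $\eta^*$ is again a fixed point of $F$, so uniqueness in $\mathcal B$ gives the same conclusion.

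Second, $H_\tau$ must map $X_1\times\{0\}$ and $\{0\}\times X_2$ onto the nonlinear stable and unstable integral manifolds. The paper proceeds in three steps:
\begin{enumerate}
\item Via the Aulbach--Wanner integral-manifold theorem, it identifies these manifolds with the sets $S(\tau)$ and $U(\tau)$ of initial values whose nonlinear orbits are bounded on $[\tau,\infty)$, resp.\ $(-\infty,\tau]$.
\item It observes that boundedness on a half-line is preserved in both directions under the conjugacy, because $\sup\|\eta\|$ and $\sup\|\zeta\|$ are finite.
\item It uses the dichotomy to identify the corresponding linear bounded-orbit sets with $\mathrm{im}\,P(\tau)$ and $\mathrm{im}\,Q(\tau)$. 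This yields $H_\tau^{-1}(S(\tau))=\mathrm{im}\,P(\tau)$ and $H_\tau^{-1}(U(\tau))=\mathrm{im}\,Q(\tau)$.
\end{enumerate}

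Without these two steps you have proved the ``Precisely'' part of the statement. You have not shown that Theorem~3.1 of Aulbach--Wanner follows in full.
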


\begin{proof}
Choose \(\delta \in(0,\gamma)\) with \(2KL<\delta\), and then choose \(\delta_0\) such that \[
  L<\delta_0<\frac{\delta}{2K}.
\]
Since Aulbach--Wanner assume \(\|f_i(t,x)\| \leq M\) componentwise, we have \[
  \|P(t)f(t,x)\|_1,\ \|Q(t)f(t,x)\|_1\le M.
\]
After the scalar renorming \(\|x\|_\lambda=\lambda\|x\|_1\), with \(2\lambda M\leq\delta_0\), the projected estimates become \[
  \|P(t)(f(t,x)-f(t,y))\|_\lambda \leq \delta_0\min\{1,\|x-y\|_\lambda\},
  \] analogously for \( Q\).
Thus, the projected variant, \cref{rem:projected-classical-variant}, applies.

If the coefficients are \(\omega\)-periodic, the same argument is applied over the quotient base \( \R / \omega \Z\), yielding an \(\omega\)-periodic conjugacy.  If the system is autonomous, the base space is a single point, and the conjugacy is time-independent. Hence, the autonomy and periodicity conclusions follow directly from the construction. 

It remains to identify the stable and unstable manifolds. Define \[
  S(\tau):=\{\xi\in X:\widetilde\Psi(\cdot,\tau,\xi) \text{ is bounded on }[\tau,\infty)\},
\]
and \[
  U(\tau):=\{\xi\in X:\widetilde\Psi(\cdot,\tau,\xi) \text{ is bounded on }(-\infty,\tau]\}.
\] By the integral-manifold theorem of Aulbach--Wanner~\cite{aulbach_integral_1996}, these bounded-orbit sets coincide with the stable and unstable integral manifolds. 
Since \(H_t=\id+\eta_t\) and \(H_t^{-1}=\id+\zeta_t\), with \[
  \sup_{t,x}\|\eta_t(x)\|<\infty, \qquad \sup_{t,x}\|\zeta_t(x)\|<\infty,
\]
boundedness of an orbit is preserved under the conjugacy. Indeed, if \[
  \mu(t):=\widetilde\Psi(t,\tau,\xi), \qquad \nu(t):=H_t^{-1}\mu(t),
\]
then \(\nu\) is the corresponding linear orbit and \[
  \|\nu(t)\|\le \|\mu(t)\|+\sup\|\zeta\|, \qquad \|\mu(t)\|\le \|\nu(t)\|+\sup\|\eta\|.
\]
Thus, \(\mu\) is bounded on a half-line if and only if \(\nu\) is bounded on the same half-line.

For the linear system, the exponential dichotomy gives \[
  \{\xi:\widetilde\Phi(\cdot,\tau)\xi \text{ is bounded on }[\tau,\infty)\}=\mathrm{im}P(\tau),
\]
and \[
  \{\xi:\widetilde\Phi(\cdot,\tau)\xi \text{ is bounded on }(-\infty,\tau]\}=\mathrm{im}Q(\tau).
\]
Consequently, \[
  H_\tau^{-1}(S(\tau))=\mathrm{im}P(\tau), \qquad H_\tau^{-1}(U(\tau))=\mathrm{im}Q(\tau).
\]
In particular in the decoupled Aulbach--Wanner setting this reads \[
  \mathrm{im}P(\tau)=X_1\times\{0\}, \qquad \mathrm{im}Q(\tau)=\{0\}\times X_2.
\]
Hence, \(H_\tau^{-1}\) maps the nonlinear stable and unstable sets onto the corresponding linear subspaces, equivalently \(H_\tau\) maps the linear stable and unstable subspaces onto the nonlinear ones. 
\end{proof}

\begin{remark}\label{rem:AW-variants}
  While recovering the statements obtained by Aulbach and Wanner in the case of Carath\'eodory differential equations, we also obtain naturally a nonuniform analog of the theorem by applying \cref{thm:NU-HG-continuous}. At the same time, by slightly strengthening the hypotheses we would also obtain Hölder regularity of the conjugacies by applying \cref{thm:Holder-HG} and \cref{thm:NU-HG-Holder}. A local version is, of course, also possible by applying \cref{thm:local-HG}.
\end{remark}

\section*{Acknowledgements}
The authors thank Henrik Kreidler for pointing us to the works of Kreidler and Siewert~\cite{kreidler_gelfand-type_2020,Siewert2020WeightedKoopman} and to adjacent works~\cite{FellDoran1988Representations,gierz_bundles_1982}, which have proven immensely valuable to our investigation.

\section*{Declaration of generative AI and AI-assisted technologies in the manuscript preparation process}
During the preparation of this work the authors used large language models in order to simplify and adapt proofs, to create outlines of early drafts based on the authors' own preexisting work, to check proofs, computations and references, and to revise the language and the \LaTeX{} source.

All results and arguments were verified and, where necessary, edited by the authors, who take full responsibility for the content of the published article.

\bibliographystyle{amsplain}
\bibliography{references}

\end{document}